\numberwithin{equation}{section}
\begin{document}
\newcommand{\E}{\mathbb{E}}
\newcommand{\PP}{\mathbb{P}}
\newcommand{\RR}{\mathbb{R}}

\newtheorem{theorem}{Theorem}[section] 
\newtheorem{remark}[theorem]{Remark}
\newtheorem{lemma}[theorem]{Lemma} 
\newtheorem{coro}[theorem]{Corollary}
\newtheorem{defn}[theorem]{Definition}
\newtheorem{assp}[theorem]{Assumption}
\newtheorem{expl}[theorem]{Example}
\newtheorem{prop}[theorem]{Proposition}
\newtheorem{rmk}[theorem]{Remark}
\newtheorem{assumption}[theorem]{Assumption}
\newcommand\tq{{\scriptstyle{3\over 4 }\scriptstyle}}
\newcommand\qua{{\scriptstyle{1\over 4 }\scriptstyle}}
\newcommand\hf{{\textstyle{1\over 2 }\displaystyle}}
\newcommand\hhf{{\scriptstyle{1\over 2 }\scriptstyle}}
\newcommand{\LA}[1]{\ref{lemma:#1}}

\newcommand{\proof}{\noindent {\it Proof}. } 
\newcommand{\eproof}{\hfill $\Box$} 

\def\tl{\tilde}
\def\x{{\bm x}}
\def\y{{\bm y}}
\def\trace{\hbox{\rm trace}}
\def\diag{\hbox{\rm diag}}
\def\for{\quad\hbox{for }}
\def\refer{\hangindent=0.3in\hangafter=1}
\newcommand{\vvec}[1]{{\mathbf{#1}}}
\newcommand\wD{\widehat{\D}}
\def\FGhat{{\widehat{F}_{\Gamma}}}
\def\FGtilde{{\widetilde{F}_{\Gamma}}}
\def\FG{{F_{\Gamma}}}
\def\FGhatc{{\widehat{F}_{c,\widehat{\Gamma}}}}
\def\FGtildec{{\widetilde{F}_{c,\widehat{\Gamma}}}}
\def\FGc{{F_{c,\widehat{\Gamma}}}}
\renewcommand{\div}{\mathop{\sf div  }}
\def\MQ{{\mathcal Q}}
\def\MU{{\mathcal U}}
\def\A{{\cal A}}
\def\B{{\cal B}}
\def\Bhat{{\hat{\cal B}}}
\def\C{{\cal C}}
\def\D{{\cal D}}
\def\E{{\cal E}}
\def\Real{{\bf R}}
\def\T{{\cal T}}
\def\G{{\cal G}}
\def\q{{\bm q}}
\def\bfw{{\bm w}}
\def\bfW{{\bf W}}
\def\F{{\cal F}}
\def\H{{\cal H}}
\def\L{{\cal L}}
\def\bfL{{\bf L}}
\def\U{{\cal U}}
\def\O{{\cal O}}
\def\uhat{{\widehat{{\bf u}}}}
\def\qhat{{\widehat{\bf q}}}
\def\Hhat{{\hat{\cal H}}}
\def\Htilde{{\tilde{\cal H}}}
\def\I{{\cal I}}
\def\Ktilde{{\widetilde{K}}}
\def\Khat{{\widehat{K}}}
\def\Kbar{{\overline{K}}}
\def\barlambda{{\overline{{\bm\lambda}}}}
\def\Kbreve{{\breve{K}}}
\def\M{{\cal M}}
\def\Mhat{{\widehat{M}}}
\def\N{{\cal N}}
\def\Khat{{\widehat{K}}}
\def\PDr{{P_{\delta}}}
\def\PBD{{P_{_D}}}
\def\Q{{\cal Q}}
\def\bfQ{{\bf {\cal Q}}}
\def\RBM{{\rm RBM}}
\def\V{{\cal V}}
\def\W{{\cal W}}
\def\What{{\widehat{W}}}
\def\PDr{{P_{\delta}}}
\def\Stilde{{\widetilde{S}}}
\def\Shat{{\widehat{S}}}
\def\Stildeeps{{\widetilde{S}}_{\large\vardelta}}
\def\Seps{S_{\large\vardelta}}
\def\Sepsdelta{S_{{\vardelta},\delta}}
\def\Sdelta{S_{\delta}}
\def\Stildeepsdelta{{\widetilde{S}}_{{\large\vardelta},\delta}}
\def\Wtilde{{\widetilde{W}}}
\def\Rtilde{{\widetilde{R}}}
\def\Rbar{{\widebar{R}}}
\def\ker{{\rm \bf ker \, }}
\def\range{{\rm \bf range \, }}
\def\diver{{\rm div \, }}
\def\rot{{\rm rot \, }}
\def\>{\raisebox{-1ex}{$\; \stackrel{\textstyle >}{\sim } \; $}}
\def\<{\raisebox{-1ex}{$ \; \stackrel{\textstyle <} {\sim } \; $}}
\def\endproof{\begin{flushright} $\Box $ \end{flushright}}
\def\uG{{{\bm\mu}_\Gamma}}
\def\uE{{{\bm\mu}_{{\mathcal O},\Gamma}}}
 \def\pE{{p_{{\mathcal A},\Gamma}}}
\def\vE{{{\bf v}_{{\mathcal A},\Gamma}}}
\def\wE{{{\bm w}_{{\mathcal O},\Gamma}}}
\def\uH{{{\bf u}_{{\mathcal H},\Gamma}}}
\def\wG{{\chi_{{\bm w}_\Gamma}}}
\def\wGT{{\chi^T_{{\bm w}_\Gamma}}}
\def\Imap{{I_H^{\Omega^i}}}
\def\Imapj{{I_H^{\Omega^j}}}
\def\Ibmap{{I_H^{\partial\Omega^i}}}
\def\Ibmapj{{I_H^{\partial\Omega^j}}}
\def\Ibmapk{{I_H^{\partial\Omega^k}}}
\def\Ibmapl{{I_H^{\partial\Omega^l}}}
\def\Ktilde{{\widetilde{K}}}
\def\Btilde{{\widetilde{B}}}
\def\Otilde{{\widetilde{O}}}
\def\phitilde{{\widetilde{\pmb{\varphi}}}}
\def\Ntilde{{\widetilde{N}}}
\def\Ztilde{{\widetilde{Z}}}
\def\Atilde{{\widetilde{A}}}
\def\Khat{{\widehat{K}}}
\def\Kbar{{\overline{K}}}
\def\Kbreve{{\breve{K}}}
\def\M{{\cal M}}
\def\S{\mathscr{S}}
\def\Mhat{{\widehat{M}}}
\def\N{{\mathcal N}}
\def\Khat{{\widehat{K}}}
\def\PDr{{P_{\delta}}}
\def\PBD{{P_{_D}}}
\def\Q{{\cal Q}}
\def\RBM{{\rm RBM}}
\def\V{{\cal V}}
\def\W{{\cal W}}
\def\What{{\widehat{W}}}
\def\wbar{{\overline{w}}}
\def\PDr{{P_{\delta}}}
\def\Stilde{{\widetilde{S}}}
\def\STG{{\Stilde_\Gamma}}
\def\KTG{{\widetilde{K}_\Gamma}}
\def\BTG{{\widetilde{B}_\Gamma}}
\def\NTG{{\widetilde{N}_\Gamma}}
\def\ZTG{{\widetilde{Z}_\Gamma}}
\def\Utilde{{\widetilde{U}}}
\def\Stildeeps{{\widetilde{S}}_{\large\vardelta}}
\def\Seps{S_{\large\vardelta}}
\def\Sepsdelta{S_{{\vardelta},\delta}}
\def\Sdelta{S_{\delta}}
\def\Stildeepsdelta{{\widetilde{S}}_{{\large\vardelta},\delta}}
\def\Wtilde{{\widetilde{W}}}
\def\Wbar{{\overline{W}}}
\def\ker{{\rm \bf ker \, }}
\def\range{{\rm \bf range \, }}
\def\diver{{\rm div \, }}
\def\rot{{\rm rot \, }}
\def\>{\raisebox{-1ex}{$\; \stackrel{\textstyle >}{\sim } \; $}}
\def\<{\raisebox{-1ex}{$ \; \stackrel{\textstyle <} {\sim } \; $}}
\def\u{{{\bf u}}}
\def\v{{{\bf v}}}
\def\q{{{\bm q}}}
\def\bfm{{{\bm m}}}
\def\SGS{{B_\Gamma}}
\def\SGSS{{Z_\Gamma}}
\def\MK{{M_{KL}}}
\def\MKi{{M^{(i)}_{KL}}}
\def\gtilde{{\widetilde{g}}}
\def\ctilde{{\tilde{c}}}
\def\ytilde{{\widetilde{y}}}
\def\Ttilde{{\widetilde{T}}}
\def\Mtilde{{\widetilde{M}}}
\def\Vtilde{{\widetilde{V}}}
\def\hatI{{\widehat{I}}}
\def\hatGamma{{\widehat{\Gamma}}}
\def\barR{{\overline{R}}}
\def\hatbarR{\widehat{{\overline{R}}}}
\def\Rtilde{{\widetilde{R}}}
\def\RtildeG{{\widetilde{R}_\Gamma}}
\def\hatRtilde{{\widehat{\widetilde{R}}}}
\def\hatdelta{{\widehat{\delta}}}
\def\hatD{{\widehat{D}}}
\def\hatE{{\widehat{E}}}
\def\hatB{{\widehat{B}}}
\def\hatP{{\widehat{P}}}
\def\hatR{{\widehat{R}}}
\def\hatM{{\widehat{M}}}
\def\hatN{{\widehat{N}}}
\def\hatA{{\widehat{A}}}
\def\hatPi{{\widehat{\Pi}}}
\def\hatH{{\hat{H}}}
\def\hatu{{\hat{u}}}
\def\hata{{\hat{a}}}
\def\hatdelta{{\widehat{\delta}}}
\def\hatS{{\widehat{S}}}
\def\K{\left(\hatRtilde_{\hatGamma}^{T}\Ttilde\hatRtilde_{\hatGamma}\right)}
\def\logHh{\left(1+\log\frac{H}{h}\right)}
\def\q{{{\bf q}}}
\def\vvecV{{{\bf V}}}
\def\vvecn{{{\bf n}}}
\def\P{{{\bf P}}}
\def\r{{{\bf r}}}
\def\etae{{{\bm\eta}}}
\def\mue{{{\bm\mu}}}
\def\q{{{\bm q}}}
\def\lambdae{{{\bm\lambda}}}
\def\vbeta{{{\bm \beta}}}
\def\Lhat{\widehat{{\bm\Lambda}}}
\def\Ltilde{\widetilde{{\bm\Lambda}}}
\newcommand{\EQ}[1]{(\ref{equation:#1})}
\def\beginproof{\indent {\it Proof:~}}
\def\endproof{\qquad $\Box $}
\def\mymax{{\max(\frac{\Ctwo^2}{{\delta}}C_E(\delta,h), C_L(\delta,H,h))}}
\def\SG{{S_\Gamma}}
\def\SGS{{B_\Gamma}}
\def\Lhhat{\widehat{{\bf L}}}
\def\Cone{\nu_{1,\delta,\tau_K,h}}

\def\Ctwo{\nu_{\epsilon,\tau_K,h}}

\def\Cthree{\nu{3,\delta,\tau_K,h}}

\def\C4{{C_{4,\delta,{\bm\beta},\tau_K,h}}}
\def\blambda{{\pmb{\lambda}}}
\def\bLambda{{\pmb{\Lambda}}}
\def\bmu{{\pmb{\mu}}}
\def\bvarphi{{\pmb{\varphi}}}
\def\balpha{{\bm\alpha}}
\def\bxi{{\bm\xi}}

\title{Stochastic BDDC algorithms}
\date{}

\author {Xuemin Tu\thanks{Department of Mathematics, University of Kansas, 1460 Jayhawk Blvd, Lawrence, KS 66045-7594, U.S.A, E-mail:
{xuemin@ku.edu}} 
\and  Jinjin Zhang\thanks{Department of Mathematics, The Ohio State University, Columbus, OH, 43210, U.S.A, E-mail:
{zhang.14647@osu.edu}}
}

\maketitle

\begin{abstract}
Stochastic balancing domain decomposition by constraints (BDDC)
algorithms are developed and analyzed for {{the sampling of the solutions}} of linear stochastic
elliptic equations with random coefficients. Different from the
deterministic BDDC algorithms, the stochastic BDDC algorithms have online and offline
stages. At the offline stage, the Polynomial Chaos (PC) expansions of different components of the BDDC
algorithms are constructed based on the subdomain local
parametrization of the stochastic coefficients. During the online
stage, the sample-dependent BDDC algorithm can be implemented with a
small cost. Under some assumptions, the condition number of the
stochastic BDDC preconditioned operator is estimated. Numerical
experiments confirm the  theory and show that the stochastic BDDC algorithm
outperforms the BDDC preconditioner constructed {{using}} the mean value of the
stochastic coefficients.

\end{abstract}
\medskip \noindent
{\small\bf Key words:} 
domain decomposition, BDDC,  stochastic Galerkin, stochastic
collocation, Monte Carlo methods

\section{Introduction}
The solutions of the stochastic partial differential equations (SPDEs) have
wide applications in sciences and engineering such as  the uncertainty
quantifications, Bayesian inferences, and data
assimilations. Efficient and robust solvers for SPDEs  are essential to many
 applications. In this paper, we aim building fast solvers
using domain decomposition algorithms
for sampling methods which involves a class of elliptic PDEs with
random diffusion coefficients.

Two popular classes of methods for
the solutions of SPDEs are  spectral methods and sampling based
methods. Spectral methods build the
functional  approximation of the
solutions on the stochastic coefficients using orthogonal
polynomials \cite{GSSFEM2003,LeMaitre}.
In {these} methods, a discretization
of the random coefficient is introduced using a finite
set of random variables. Many random variables might be needed for
problems with {{complicated uncertainty.}}  The Stochastic Galerkin method,
an  intrusive spectral method,   transforms a stochastic PDE
into a coupled set of deterministic PDEs which lead to large systems of
algebraic equations after  spatial/temporal discretizations. The sampling based methods include Monte Carlo
methods \cite{Liu2008} and non-intrusive collocation stochastic finite
element methods \cite{XHcollocation2005,BNTcollocation2010}. These methods
require a large number of {{solutions} of {SPDEs}} with particular values of the
coefficients which are obtained randomly or
deterministically. Efficient deterministic solvers are very
important.

Domain decomposition methods reduce large problems into collections of
smaller problems by decomposing computational domains into smaller
subdomains. These subdomain problems are computationally easier than
the {{original problems,}} and most of them (if not all) can be solved
independently. {{Therefore},} domain decomposition methods
have  provided  efficient and robust solvers for the systems
arising from discretizations of deterministic partial differential
equations (PDEs) \cite{Toselli:2004:DDM}. The balancing
domain decomposition by constraints (BDDC) methods, one of the most
popular nonoverlapping domain decomposition methods,  were introduced
in \cite{Dohrmann:2003:PSC} and  analyzed in \cite{Jan1,
Jan2} for symmetric positive definite problems. The BDDC methods have
also been extended to solving the  linear systems resulting from the
discretization of different deterministic PDEs \cite{LiBDDCS,Tu:2005:BPP, Tu:2005:BPD,
  Tu_thesis,TW:2016:HDG,TW:2017:WGS,TWZstokes2020,TZAD2021,ZTBrinkman2022,TZOseen2022,STX2021}.

Applications of the domain decomposition methods for PDEs with random
coefficients have attracted {much attention}
recently. {{For example}}, domain decomposition ideas were used with an importance
sampling algorithm in
\cite{LiaoWillcox2015},  with multiscale finite element methods in
\cite{HLZ2017},  with basis adaptations in
\cite{TST2017,Tipireddy2018}, and via moment matching/minimization 
\cite{Cho2015,ZhangBabaeeKarniadakis2018}.  Domain decomposition methods have also
been used to construct reduced models in \cite{MZ2018}.  
Many
preconditioning techniques have been developed for the large linear
systems arising from the Stochastic Galerkin methods
\cite{PE2009,Ullmann2010,PU2010,RV2010,SS2013,SS2014,SGP2014,DKPPS2018,BLY2021}. 
 Among them, different domain decomposition algorithms
including BDDC methods have been applied 
\cite{SBG2009,SS2013,SS2014,DKPPS2018}. 
 For the sample based methods, domain decomposition methods {can be}
applied to solve the deterministic 
system {{for}} each given coefficient of the SPDEs \cite{MTWC}.

In this paper, we construct stochastic BDDC algorithms for the
sampling based methods to speedup each sample simulation. Different
from the deterministic BDDC algorithms which require expensive construction for
each sample, the stochastic BDDC algorithms have online-offline
stages.  The expensive construction is completed at the offline
stage. In the online stage, for each sample, the sample-dependent BDDC
algorithm can be implemented with a small cost. Our stochastic BDDC
algorithms use low-dimensional subdomain local parametrization of the
stochastic coefficient and local Polynomial Chaos (PC)
expansion \cite{Contreras2018B,Contreras2018A}. Compared to those works in
\cite{SBG2009,SS2013,SS2014,DKPPS2018}, which depend on the global
discretization of the random coefficients of the SPDEs,  our constructions of the BDDC
algorithms are local to each subdomain and therefore the 
applications with complex  high-dimensional
uncertainty sources will be feasible.  Exploring local parametrization
for the sampling based methods have been studied in
\cite{ChenJGX2015,Contreras2018A,Contreras2018B,stochasticpRMCM2021,StochasticFETIDP2021}. Particularly in
\cite{Contreras2018B}, a stochastic subdomain interface global Schur
complement is constructed using subdomain local parametrization in the
offline stage.  In the online stage, the sample-based Schur complement
is approximated by  the stochastic {surrogate} Schur complement with a small cost.
The convergence of this approximation to the exact Schur complement
has {{been}} demonstrated by numerical experiments.  In
\cite{stochasticpRMCM2021},  these sampled-based approximated Schur complements are
used as preconditioners for solving exact Schur complement for each
sample.
For large scale high dimensional applications, {a} large number
of subdomains can   give small subdomain local
problem size and therefore a
small number of  subdomain local random variables in the local
parametrization. These will help to reduce the computational
complexity and memory requirements of the offline stage \cite{Contreras2018B}. However, the size of the global Schur complement
increases with the number of the subdomains. To form the global Schur
complement explicitly and use direct solvers for the
solution will be very  expensive if still possible and this leads to a
limitation of the works in
\cite{ChenJGX2015,Contreras2018B,stochasticpRMCM2021}.
Another approach proposed in \cite{Contreras2018B } is that the
global Schur complement is not formed explicitly and  the matrix-vector
multiplication is calculated in each subdomain for the conjugate gradient (CG)
iterations.  However, no
preconditioners are investigated for this approach and the number of
CG iterations needed for a certain accuracy might be large.
The stochastic BDDC algorithms we proposed in this paper can provide
efficient preconditioners for this approach. 
Similar to  the
deterministic BDDC algorithms \cite{Widlund:2020:BDDC},  the
stochastic BDDC algorithm neither {{forms}} the global Schur complement
explicitly nor {{requires}} the exact/approximate of factorizations {{of}} the global
Schur complement. In the online stage, only a small coarse problem is
constructed. The size of the {{coarse}} problem is proportional to the
number of the subdomains, which is  much smaller than the global Schur
complement. When the {number} of the {subdomains} is extremely large, the
coarse problem might become a bottle-neck and
three or multilevel strategies used in the deterministic 
BDDC algorithms,
\cite{Tu:2004:TLB,Tu:2005:TLB,Tu:2008:TBS,ZT:2016:Darcy}, can be
applied similarly  in our stochastic BDDC algorithms  to remove the
bottle-neck. Under some assumptions, we analyze the condition numbers of our stochastic BDDC
algorithms and   compare the performance  with the
deterministic (exact)  BDDC algorithms and the algorithms with the preconditioner
constructed 
using the mean value of the samples. Our stochastic BDDC algorithms
outperform the algorithms based
on the mean value and have similar performance as the exact BDDC
algorithms,  but the
expensive construction of the preconditioners is removed at the
online stage.  


The rest of the paper is organized as follows. We describe the
stochastic PDEs and sampling methods in Section 2. In Section 3, the deterministic
BDDC algorithms  is given  and the detailed stochastic BDDC algorithms are
provided in Section 4.  In Section 5, {the} analysis of our stochastic
BDDC algorithms are provided. Finally, we present some computational results in Section 6.

\section{ A stochastic elliptic equation and a  finite element discretization}
Let ${\cal P}=(\Theta,\Sigma_\Theta,{P})$ be a probability
space, where  $\Theta$ is the set of random events, $\Sigma_\Theta$ is
the associated $\sigma$-algebra, and {$P$} is the probability
measure.
Denote $\langle\cdot\rangle$ as the expectation operator with
the probability measure ${P}$  for any random variables $\xi$
defined on the space$(\Theta, \Sigma_\Theta,{P})$.  {In this paper, we also use $E(\xi)$ to present the
  expectation of $\xi$.}
\begin{align}\label{equation:expectation}
E(\xi)=\langle\xi\rangle=\int_{\Theta}\xi(\theta)  d{P(\theta).}
\end{align}

We consider the stochastic elliptic equation on a two-dimensional domain $\Omega$
\begin{equation}\left\{
\label{equation:pde1}
\begin{array}{rcl}
-\nabla\cdot\left(\kappa (\x,\theta)\nabla{u}(\x,\theta)\right)&=&f(\x),\quad
                                                        \mbox{in}\quad
                                                        \Omega\times \Theta,\\
u(\x,\theta)&=&{0},\quad\mbox{on}\quad\partial
                                    \Omega\times \Theta,
\end{array}\right.
\end{equation}
where $f(\x)$ is a deterministic function in $L^2(\Omega)$. We
assume that  the
stochastic diffusion coefficient $\kappa
(\x,\theta)=\exp(a(\x,\theta))$, where $a(\x,\theta)$ is a centered Gaussian field with covariance function $C$
\begin{align*}
a(\x,\theta)\sim N(0,C).
\end{align*}
\EQ{pde1} is well posed under mild conditions \cite{CPDE2012} and we
have {{$u\in L^2(\Theta;H^1_0(\Omega))$}} with {{$u(\cdot,\theta)\in H^1_0(\Omega)$ for a.e. $\theta\in\Theta$}}. 
Similarly  as in\cite{stochasticpRMCM2021}, we take 
$C(\x,\y)$ as
\begin{align}\label{equation:cov}
C(\x,\y)=\sigma^2\exp(-\frac{\|\x-\y\|_2^2}{l}),
\end{align}
with the correlation length $l>0$. 


In sampling based methods, such as Monte Carlo
methods and non-intrusive collocation stochastic finite element
methods, a large number of the solutions of \EQ{pde1}, with particular
values $\theta$ and $\kappa(x,\theta)$, are
required. Therefore, fast solvers for each $\theta$ is crucial  for the
computation.   In this paper, we will introduce domain decomposition
based solvers for this purpose.

The rest of this section, we introduce the finite element
discretization of \EQ{pde1} with a given $\theta$.
Let $\T_h$ be a shape-regular and quasi-uniform triangulation of
$\Omega$ and the element in $\T_h$ is denoted by $K$.  $P_1(K)$ is the
space of polynomials of order at most $1$. 
We define a piece-wise linear finite element space $\What$ as:
{{$$\What=\{\phi(\x)\in H^1_0(\Omega):\phi(\x)|_K\in P_1(K),\forall K\in\T_h\}.$$}}
With a standard finite element procedure \cite{Braess:1997:FET}, the solution of \eqref{equation:pde1} can be approximated as:
\begin{align*}
u(\x,\theta)\approx\sum\limits_{i=1}^{N_x} \phi_i(\x)u_{i}(\theta),
\end{align*}
where $N_x$ is the dimension of $\What$ and $u(\theta)=\left(u_1(\theta),\cdots,u_{N_x}(\theta)\right)^T$ is the solution of the following linear system
\begin{equation}\label{equation:geqA}
A(\theta)u(\theta)=F.
\end{equation}
Here $A$ is the stiffness matrix and $F$ is the right hand side.  
{$A$ and $F$ can be formed by 
\begin{equation}\label{equation:Aij}
 { {A_{s,t}(\theta)=\int_{\Omega}\kappa(\x,\theta)\nabla\phi_s(\x)\nabla\phi_t(\x)d\x,\quad
  F_{s}=\int_{\Omega}\phi_s(\x)f(\x)d\x.}}
\end{equation}
}

When $N_x$ is large,  solving the linear system \EQ{geqA} might be very
expensive. We will introduce deterministic BDDC algorithms for
solving \EQ{geqA} in next section.  

\section{Domain decomposition and deterministic BDDC algorithms}

 We decompose the original computational domain $\Omega$ 
into $N$ nonoverlapping polyhedral subdomains $\Omega^{(i)}$ without
cutting any element in $\T_h$. 
We assume
that  each
subdomain is a union of shape regular coarse elements.  Let $H$ be the typical diameter
of the subdomains and  $\Gamma
= {(\cup\partial\Omega^{(i)})} \backslash
\partial\Omega$ be the subdomain interface shared by neighboring
subdomains.
The interface of
subdomain $\Omega^{(i)}$ is denoted by $\Gamma_i = \partial \Omega^{(i)}
\cap \Gamma$.  We note that for the less regular subdomains obtained
from mesh partitioners, our algorithm is also defined and the analysis with irregular
subdomains in domain decomposition methods, see \cite{DKW2008O,KRW2008F,DKWDD22,WDD23,DW2012}.

We first reduce the global system \EQ{geqA} into a subdomain interface
problem on $\Gamma$. 
In order to do that, we decompose 
the space $\What$ as follows:
\[\What=W_I \oplus \What_\Gamma= \left(\Pi_{i=1}^N W_I^{(i)}\right)\oplus
  \What_\Gamma,\]
where
$W^{(i)}_I$  are the spaces of the subdomain interior variables, while
$\What_\Gamma$ is the subspace corresponding to the variables on the
interface. {{Throughout the paper, we use the same symbol for a finite element function and its degrees of freedom vector.}} 
{For a fixed $\theta$}, we can rewrite the original  problem~\EQ{geqA} 
as: find $u_I (\theta)\in W_I$ and $u_\Gamma(\theta) \in \What_\Gamma$, such that
\begin{equation}
\label{equation:twoparts} \left[
\begin{array}{cc}
A_{II}(\theta)       &  A_{ \Gamma I}^T(\theta)     \\
A_{\Gamma I}(\theta) & A_{\Gamma \Gamma}(\theta)
\end{array}
\right] \left[
\begin{array}{c}
u_I(\theta)        \\
u_\Gamma(\theta)
\end{array}
\right] = \left[
\begin{array}{c}
f_I        \\
f_\Gamma    \\
\end{array}
\right].
\end{equation}
Here 
$A_{\Gamma I}(\theta)$ and $A_{\Gamma \Gamma}(\theta)$ are assembled from subdomain matrices across the
subdomain interfaces. $A_{II}(\theta)$ is block diagonal and each
block corresponds to  one subdomain. Therefore, we can 
eliminate the subdomain interior variables $u_I(\theta)$ in each subdomain independently
from~\EQ{twoparts} and reduce the original system \EQ{geqA} into a subdomain interface
problem.  The subdomain local Schur complement
$S_\Gamma^{(i)}(\theta)$ is defined as
\begin{equation}
\label{equation:dissubschur} S_\Gamma^{(i)}(\theta)=A^{(i)}_{\Gamma \Gamma} (\theta)-A^{(i)}_{\Gamma I}
(\theta)A_{II}^{(i)^{-1}}(\theta)A^{(i)^T}_{\Gamma I} (\theta),
\end{equation}
where $A^{(i)}_{\Gamma \Gamma}(\theta)$, $A^{(i)}_{\Gamma I}(\theta)$, and $A_{II}^{(i)}(\theta)$ are
subdomain local matrices. 
The global Schur complement $S_\Gamma(\theta)$ can be assembled from the
subdomain Schur complement $S^{(i)}_\Gamma(\theta)$  and the global
interface problem is defined as:
find {{$u_\Gamma(\theta) \in \What_\Gamma$}} such that 
\begin{equation}
\label{equation:globalschur} 
  S_\Gamma (\theta) u_\Gamma(\theta) = g_\Gamma(\theta) ,
  \end{equation}
  where
$g_\Gamma(\theta) =f_\Gamma-A_{\Gamma I}(\theta)A_{II}(\theta) ^{-1}f_I$.

 In order to introduce the BDDC preconditioner, we
further decompose $\What_\Gamma$ into the primal interface variables 
and the remaining (dual) variables. We denote the primal variable space as
$\What_\Pi$ and the dual variable space $W_\Delta$.  We relax the
continuity for the dual variables and introduce a partially assembled
interface space as
\[
  \Wtilde_\Gamma=\What_\Pi\oplus W_\Delta=\What_\Pi\oplus
  \left(\Pi_{i=1}^N W_\Delta^{(i)}\right).
\]
Here the degrees of freedom in $ W_\Delta$ may be
discontinuous across the subdomain interface. The subspace $\What_\Pi$
contains the coarse level, continuous primal
interface degrees of freedom. 
Define a 
 partially sub-assembled
 problem matrix $\Atilde(\theta)$ as a two by two block form
\begin{equation}
\label{equation:anothertwoparts} \Atilde(\theta)=\left[
\begin{array}{cc}
A_{II}(\theta)       &  \Atilde^T_{\Gamma I} (\theta)     \\
\Atilde_{\Gamma I} (\theta) & \Atilde_{\Gamma \Gamma}(\theta) 
\end{array}
\right],
\end{equation}
where
\[\Atilde_{I\Gamma}(\theta) =\left[\begin{matrix} A_{I\Delta} (\theta) &
      A_{I\Pi}(\theta) \end{matrix}\right],
  \quad \Atilde_{\Gamma\Gamma}(\theta) =\left[\begin{matrix} A_{\Delta\Delta}(\theta)  &
      {A^T_{\Pi\Delta}(\theta)} \\A_{\Pi\Delta}(\theta) & A_{\Pi\Pi}(\theta) \end{matrix}\right].\]
We note that $\Atilde_{\Gamma I}(\theta)$ and  $\Atilde_{\Gamma \Gamma}(\theta) $ are assembled
only for the coarse level primal degrees of freedom 
across the interface. We define the partially sub-assembled Schur complement
operator $\widetilde{S}_\Gamma$ as
\begin{equation}\label{equation:schurT}
  \widetilde{S}_\Gamma(\theta) 
= \Atilde_{\Gamma \Gamma} (\theta) - {\Atilde_{\Gamma I} (\theta)  A_{II}^{-1}(\theta) 
\Atilde^T_{\Gamma I}(\theta)} .
\end{equation}
We can obtain $\widetilde{S}_\Gamma(\theta) $ by partially
assembling the subdomain local Schur complement $S^{(i)}_\Gamma(\theta) $
defined in \EQ{dissubschur} with respect to the primal interface
variables. Denote the injection operator from {{$\What_\Gamma$
to $\Wtilde_\Gamma$ by $\Rtilde_\Gamma$.}} We can   further
assemble  $\Stilde_\Gamma(\theta)$ with respect to the dual
interface variables to obtain $S_\Gamma(\theta)$, i.e.,
\[
S_\Gamma(\theta) =\Rtilde_\Gamma^T \Stilde_\Gamma(\theta)  \Rtilde_\Gamma.
\]

Let $W_r^{(i)}=W_I^{(i)}\oplus W_\Delta^{(i)}$  and
\begin{equation}\label{equation:Arc}
  A_{rr}^{(i)}=\left[ \begin{array}{cc} A^{(i)}_{II}(\theta) 
& {A^{(i)^T}_{\Delta I}(\theta) } \\ A^{(i)}_{\Delta I}(\theta)  &  A^{(i)}_{\Delta \Delta}(\theta)  \\
                      \end{array} \right],\quad A^{(i)}_{cr}=\left[ A^{(i)}_{\Pi I}(\theta)  \quad A^{(i)}_{\Pi \Delta}(\theta) 
                    \right],\quad A_{cc}^{(i)}= A^{(i)}_{\Pi \Pi} (\theta).
                  \end{equation}
 We can rewrite \EQ{anothertwoparts} as                  
 \begin{equation}
\label{equation:rctwoparts} \Atilde(\theta)=\left[
\begin{array}{cc}
\Atilde_{rr}(\theta)       &  \Atilde^T_{cr} (\theta)     \\
\Atilde_{cr} (\theta) & A_{cc}(\theta) 
\end{array}
\right],
\end{equation}
where
$\Atilde_{rr}(\theta)$ is a block diagonal matrix with $A_{rr}^{(i)}$
on the diagonal.  $\Atilde_{cr}(\theta)$  is assembled from $A_{cr}^{(i)}$
only for the coarse level primal variables $\Pi$ and $A_{cc}$ is fully
assembled from $A_{cc}^{(i)}$.

We define $
\Rtilde_{D,\Gamma}(\theta)=D(\theta) \Rtilde_\Gamma$, where $D(\theta)$ is a 
scaling matrix and should provide a partition of unity:
\begin{equation}\label{equation:PU}\widetilde{R}_{D,\Gamma}(\theta)^T
\widetilde{R}_{\Gamma}=\widetilde{R}_{\Gamma}^T
\widetilde{R}_{D,\Gamma}(\theta)=I.
\end{equation}
Different choices of the scaling matrix $D$ can be found in
\cite{ZT:2016:Darcy,Widlund:2020:BDDC}. {{Define $R_{\Gamma\Pi}$  as the mapping from $\widetilde{W}_\Gamma$ onto its subspace $W_{\Pi}$ and $R^{(i)}_{\Gamma\Pi}$ as the restriction of $R_{\Gamma\Pi}$ to the local interface of subdomain $\Omega^{(i)}$.   }}
 The BDDC preconditioned interface problem is
\begin{equation}
\label{equation:preconditioned}
\Rtilde^T_{D,\Gamma}(\theta)\Stilde_{\Gamma}^{-1}(\theta) \Rtilde_{D,\Gamma}(\theta)
S_{\Gamma}(\theta) u_{\Gamma}(\theta) =
\Rtilde^T_{D,\Gamma} (\theta)\Stilde_{\Gamma}^{-1}(\theta) \Rtilde_{D,\Gamma}(\theta)
g_{\Gamma}(\theta),
\end{equation}
where 
\begin{equation}
\label{equation:StInverse}
 \Stilde_\Gamma^{-1}(\theta) =
R_{\Gamma\Delta}^T\left(\sum_{i=1}^{N}
 \left[ \vvec{0} ~ R^{(i)^T}_{\Delta}
 \right] A_{rr}^{{(i)}^{-1}}(\theta)  \left[ \begin{array}{c} \vvec{0}  \\
R^{(i)}_{\Delta} \end{array} \right] \right)R_{\Gamma \Delta} + \Phi(\theta)
S_{\Pi}^{-1} (\theta) \Phi^T(\theta),
\end{equation}
\begin{equation}
\label{equation:phi}
 \Phi (\theta)= R_{\Gamma \Pi}^T - R^T_{\Gamma \Delta}\sum_{i=1}^{N}
  \left[ \vvec{0} ~ R^{(i)^T}_{\Delta}
 \right]A_{rr}^{{(i)}^{-1}}(\theta)  A_{cr}^{(i)^T}(\theta) 
R^{(i)}_{\Pi},\\ 
\end{equation}
and
\begin{equation}
\label{equation:Spi0}
\begin{array}{rcl}
S_{\Pi}(\theta) 
  =  \sum_{i=1}^{N} R^{(i)^T}_{\Pi} \left\{
 A^{(i)}_{cc} (\theta) - A_{cr}^{(i)}(\theta)  A_{rr}^{(i)^{-1}}(\theta)  A_{cr}^{(i)^T}(\theta) \right\} R^{(i)}_{\Pi}.
\end{array}
\end{equation}
{Since $S_\Gamma(\theta)$ and
$\Rtilde^T_{D,\Gamma}(\theta)\Stilde_{\Gamma}^{-1}(\theta)
\Rtilde_{D,\Gamma}(\theta)$ in \EQ{preconditioned} are both   symmetric positive definite, we will use
the conjugate gradient method (CG)  to solve \EQ{preconditioned} with
$\Rtilde^T_{D,\Gamma}(\theta)\Stilde_{\Gamma}^{-1}(\theta)
\Rtilde_{D,\Gamma}(\theta)$ as the preconditioner.}  In practice, we do not form
$S_\Gamma(\theta)$ explicitly. Instead, we will store $A_{\Gamma
  \Gamma}^{(i)}(\theta)$, $A_{\Gamma
  I}^{(i)}(\theta)$, and the Cholesky factors of $A_{II}^{(i)}(\theta)$ in each
subdomain for the matrix-vector multiplication of $S_\Gamma$. For the
preconditioner, we store $A_{cr}^{(i)}(\theta)$ and the Cholesky factors of $A_{rr}^{(i)}(\theta)$ in each
subdomain. We need to form the global coarse matrix $S_\Pi(\theta)$, defined
in \EQ{Spi0}, and store
the Cholesky factor of it as well.

It is well-known \cite{Widlund:2020:BDDC} that, the performance of the BDDC algorithms is mainly 
determined by the choices of the primal variable space $\What_\Pi$ and
the scaling operator $D(\theta)$. The smallest eigenvalue of the preconditioned
BDDC operator is bounded from below by $1$ and the largest eigenvalue
is bounded from above by the bound of an average operator
\begin{equation}\label{equation:ED}
  E_D(\theta)=\Rtilde_\Gamma\Rtilde_{D,\Gamma}(\theta),
\end{equation}
which computes a weighted
average across the subdomain interface and distributes the average
back to each subdomain.

For two dimensional problems \cite{Widlund:2020:BDDC}, if the coefficient $\kappa$ in \EQ{pde1} is
a constant or has a small variation  in each subdomain,  the condition
number of the preconditioned BDDC operator in \EQ{preconditioned}  is
bounded by $C\left(1+\log\frac{H}{h}\right)^2$ when $\What_\Pi$
includes the vertices of each subdomain and $D$ is a simple
$\rho$-scaling. Here $H$ is the size of the subdomain and $h$ is the
size of the mesh, $C$ is a constant independent of $H$, $h$,
$\kappa$.  When $\kappa$ has a large variation in each subdomain, some
adaptive primal variables have to be included in $\What_\Pi$ and the
deluxe scaling is needed to ensure the good performance of the BDDC
algorithms. {In this paper, we only consider $\What_\Pi$ including the vertices of each subdomain. The adaptive primal constrains will be considered in our future work. }

\section{Stochastic BDDC algorithms}
In sampling based methods,  one needs to compute  {{a large number of
solutions}} of \EQ{pde1} with different values of $\theta$. For a given
$\theta$, one can follow the procedure discussed in {{the previous sections}}
to solve \EQ{preconditioned} using CG.  In
order to do that, as discussed at the end of {{the last section}}, we need to
form several subdomain local matrices and a global coarse matrix.
The construction of these matrices will require
{an} expensive set-up procedure,
which makes the overall algorithms less efficient.
There are some  deterministic preconditioner approaches\cite{PE2009}, {which} form 
preconditioners using {a particular value of $\theta$}
 or the mean value of $\theta$ over all the samples, and then apply the  same  preconditioner to
 each sample.  When the stochastic parameter has high variability,
 the  deterministic preconditioner might be inefficient.

 In this section, we
will focus on how we can use the subdomain local parametrization and
{{PC} expansion} to approximate the BDDC preconditioner. We construct
the {{PC}} expansion of different components of the BDDC
preconditioner at the offline stage. In the online stage,  given a 
$\theta$, with a
small cost, we can form a $\theta$-based preconditioner to solve
\EQ{preconditioned}.  We call this preconditioner  the stochastic BDDC
preconditioner. This preconditioner can be used for {{solving the exact}} 
$S_\Gamma$  or an approximate $S_\Gamma$ proposed in
\cite{Contreras2018B}. It is also effective for the stochastic system
with high variability.

{ To make our notation simple, we will construct our
  stochastic BDDC preconditioner for single-level Monte Carlo
  methods.  There are active research on multi-level Monte Carlo
  methods
  \cite{barth_multi-level_2011,Cliffe2011,GilesActa2015,ali_multilevel_2017,giles2018book,dodwell_multilevel_2019},
  which can significantly speed up the single-level methods. For those
 multi-level methods based on different levels of meshes such as \cite{barth_multi-level_2011,Cliffe2011}, we can
  similarly construct our stochastic BDDC preconditioners at different
  levels to
  precondition  the
  solver at that level. 
  Moreover,
  the coarse problems built  in the BDDC algorithms are closely related to
the system on the corresponding coarse mesh, see \cite[Lemma
4.2]{Tu:2004:TLB,Tu:2005:TLB}. In our future work, we will study how the coarse problem
can be used in the multi-level Monte Carlo methods and make
the connection {{between}} the
multi-level BDDC \cite{MandelML,ZT:2016:Darcy} and multi-level Monte Carlo
algorithms. }

In the following subsections, we first {{introduce}}
the subdomain local Karhunen-Lo$\grave{e}$ve (KL) expansion and the
{{PC}}
approximation, {{as discussed in \cite{Contreras2018B}}}. We then
construct our stochastic BDDC preconditioners using both stochastic
collocation and Galerkin methods. 

\subsection{Subdomain local KL expansions and {{PC}} approximations}
The KL expansion of $a(\x,\theta)=\log \kappa(\x,\theta)$ can be represented as
\begin{align*}
{a}(\x,\theta)=\sum\limits_{m=1}^{\infty} {\sqrt{\lambda_m}}a_m(\x)\xi_m(\theta),
\end{align*}
where $(\lambda_m,a_m(\x))^{\infty}_{i=1}$ are the set of eigenvalues
and {corresponding} eigenfunctions of $C(\x,\y)$, defined in \EQ{cov}, with
$\lambda_1>\lambda_2>\cdots$. The random variables
$\{\xi_1,\xi_2,\cdots\}$ are independent, identical distributed, and $\xi_m \sim N(0,1).$ 
The {global} truncated KL expansion of $a(\x,\theta)$ is denoted as
\begin{equation}\label{equation:globalKL}
a(\x,\theta)\approx {a_\MK}(\x,\theta)=\sum\limits_{m=1}^{M_{KL}}\sqrt{\lambda_m}a_m(\x)\xi_m(\theta),
\end{equation}
where the $\lambda_m$  and $a_m({\x})$ are the dominant $M_{KL}$ eigenvalues and corresponding eigenfunctions of $C(\x,\y).$
$a_{M_{KL}}(\x,\theta)$ is an approximation of $a(\x,\theta)$ with a reduced dimension.
{Given an $M_{KL}$, the approximation error of $a_{M_{KL}}(\x,\theta)$
can be described 
as {\cite{Contreras2018A}}:  }
\begin{equation}\label{equation:globalKLerror}
  E(\|a_\MK(\x,\theta)-a(\x,\theta)\|^2_{2})=\sum\limits_{m=M_{KL}+1}^{\infty}\lambda_m{{\|a_m\|^2_{2}}}.
  \end{equation}

{Denote $a^{(i)}(\x,\theta)$ as the restriction of $a(\x,\theta)$ to
the subdomain $\Omega^{(i)}$ and let $C^{(i)}(\x,\y)$
  be  the restriction of $C(\x,\y)$ to
the subdomain $\Omega^{(i)}$. $(\lambda^{(i)}_m,a^{(i)}_m)$ are the set of eigenvalues and {corresponding}
eigenfunctions of $C^{(i)}(\x,\y)$, namely as \cite[Equation (3.3)]{ChenJGX2015},
$$\int_{\Omega^{(i)}} C^{(i)}(\x,\y) a^{(i)}_m(\x)d\x=\int_{\Omega^{(i)}} C(\x,\y) a^{(i)}_m(\x)d\x =\lambda^{(i)}_m a^{(i)}_m(\y).$$}
The KL expansion of $a^{(i)}(\x,\theta)$ in $\Omega^{(i)}$ can be represented as
\begin{equation}\label{equation:LML}
{a}^{(i)}(\x,\theta)=\sum\limits_{m=1}^{\infty}\sqrt{\lambda^{(i)}_m}a^{(i)}_{m}(\x)\xi^{(i)}_m(\theta).
\end{equation}
We label the eigenvalues in descending order $\lambda^{(i)}_1>\lambda^{(i)}_2>\cdots.$ 
The local truncated KL expansion of $a^{(i)}(\x,\theta)$ can be
represented as: 
\begin{equation}\label{equation:aKL}
a^{(i)}(\x,\theta)\approx {a}^{(i)}_{M_{KL}}(\x,\theta)=\sum\limits_{m=1}^{{M^{(i)}_{KL}}}\sqrt{\lambda^{(i)}_m}{a}^{(i)}_m(\x)\xi^{(i)}_m(\theta),
\end{equation}
where $M^{(i)}_{KL}$ is the number of terms  kept in the expansion.
{The local KL truncation error can be obtained as
\begin{equation}\label{equation:localerror}
E(\|a^{(i)}(\x,\theta)-a^{(i)}_{M_{KL}}(\x,\theta)\|^2_2)=\sum\limits_{m=M^{(i)}_{KL}+1}^{\infty}\lambda^{(i)}_m
\|a^{{(i)}}_m\|^2_2.
\end{equation}

{Since  $\{a_{m}^{(i)}\}$ forms an
  orthonormal basis, similar to \cite[Equation (3.19)]{ChenJGX2015},
  given a sample $\theta$,  we can obtain the subdomain local 
$\xi^{(i)}_m(\theta)$ in \EQ{LML} for 
$a^{(i)}(\x,\theta)$ as follows: for $i=1,2,\cdots$, }
\begin{equation}\label{equation:thetaprojection}
\begin{aligned}
\xi^{(i)}_m(\theta)&=\frac1{\sqrt{\lambda^{(i)}_m}}\int_{\Omega^{(i)}}
a(\x,\theta){a}^{(i)}_m(\x)d\x=\frac1{\sqrt{\lambda^{(i)}_m}}\int_{\Omega^{(i)}}
\left(\sum\limits_{n=1}^{\infty}\sqrt{\lambda_n}{a}_n(\x)\xi_n(\theta)\right)a^{(i)}_m(\x)d\x.
\end{aligned}
\end{equation}

{ Given $a_{M_{KL}}(\x,\theta)$,
let $\hat{\xi}^{(i)}(\theta)=\left(\hat{\xi}^{(i)}_1(\theta),\cdots,
\hat{\xi}^{(i)}_{M_{KL}^{(i)}}(\theta)\right)$, which can  be obtained by solving a least square
problem in each subdomain $\Omega^{(i)}$:
\begin{equation}\label{equation:localxi}
\hat{\xi}^{(i)}(\theta)=\underset{(v_1,\cdots,v_m,\cdots,v_{M_{KL}^{(i)}})\in \mathbb{R}^{M_{KL}^{(i)}}}{\arg\min}\int_{\Omega^{(i)}}\left(\exp\left(a_{M_{KL}}(\x,\theta)\right)
  -\exp\left(\sum\limits_{m=1}^{{M^{(i)}_{KL}}}\sqrt{\lambda^{(i)}_m}{a}^{(i)}_m(\x)v_m\right)\right)^2d\x.
\end{equation}
We can define the truncated local KL expansion $a_{M_{KL}}^{(i)}=\sum\limits_{m=1}^{{M^{(i)}_{KL}}}\sqrt{\lambda^{(i)}_m}{a}^{(i)}_m(\x)\hat{\xi}^{(i)}_m(\theta)$.
Note that any continuity of $\exp\left(a_{M_{KL}}(\x,\theta)\right)$ at
the subdomain 
interface can be built in as a constraint, \cite{ChenJGX2015}. }

{When $M_{KL}^{(i)}$ is large such that $a_{M_{KL}}(\x,\theta)$ can be well
presented by the subspace spanned by
$\{a_m^{(i)}\}_{m=1}^{M_{KL}^{(i)}}$, \EQ{localxi} can be approximated
by the following
 least square problem, which are used in for example
 \cite{MZ2018,stochasticpRMCM2021},  
\begin{equation}\label{equation:alocalxi}
\hat{\xi}^{(i)}(\theta)\approx \underset{(v_1,\cdots,v_m,\cdots,v_{M_{KL}^{(i)}})\in \mathbb{R}^{M_{KL}^{(i)}}}{\arg\min}\int_{\Omega^{(i)}}\left(a_{M_{KL}}(\x,\theta)
  -\left(\sum\limits_{m=1}^{{M^{(i)}_{KL}}}\sqrt{\lambda^{(i)}_m}{a}^{(i)}_m(\x)v_m\right)\right)^2d\x.
\end{equation} The solution of the  least square problem \EQ{alocalxi} is given by 
  \begin{equation}\label{equation:lxi}  
\hat{\xi}^{(i)}_m(\theta)=\frac1{\sqrt{\lambda^{(i)}_m}}\int_{\Omega^{(i)}}
a_{M_{KL}}(\x,\theta){a}^{(i)}_m(\x)d\x=\frac1{\sqrt{\lambda^{(i)}_m}}\int_{\Omega^{(i)}}
\left(\sum\limits_{n=1}^{{{M}_{KL}}}\sqrt{\lambda_n}{a}_n(\x)\xi_n(\theta)\right)a^{(i)}_m(\x)d\x.
\end{equation}}

The subdomain local matrices $A_{II}^{(i)}(\theta)$, $A_{\Gamma I}^{(i)}(\theta)$, $A^{(i)}_{\Gamma\Gamma}(\theta)$,  $A_{cr}^{(i)}(\theta)$, $A_{rr}^{(i)}(\theta)$,
and $A_{cc}^{(i)}(\theta)$,  required in the BDDC algorithm
\EQ{preconditioned}, depend on the subdomain local
$a^{(i)}(\x,\theta)$ only. They can be considered as functionals of
$\xi^{(i)}_m(\theta) $ and truncated spectral {expansions} can be used to
approximate these matrices. 
To obtain a similar accuracy,
$M_{KL}^{(i)}$, the number of the subdomain local KL
expansion terms, can be  much smaller than the global $\MK$.  Our BDDC
algorithm is constructed using local KL expansion instead of the
global KL expansion. This is a big difference compared with the previous
domain decomposition algorithms for solving \EQ{pde1} as in \cite{SBG2009,SS2013,SS2014,DKPPS2018}.


In the subdomain $\Omega^{(i)}$, let $\psi^{(i)}_{\alpha_l}$ be the univariate Hermite
polynomial of degree $\alpha_l$ and $\psi^{(i)}_{\bm\alpha}$ be the
products of those orthonormal  univariate Hermite polynomials 
as
\begin{align*}
\psi^{(i)}_{\bm\alpha}({\bm\xi})=\prod\limits_{l=1}^{M^{(i)}_{KL}}\psi^{(i)}_{\alpha_l}(\xi_l),
\end{align*}
where
${\bm\alpha}=(\alpha_1,\cdots,\alpha_{M^{(i)}_{KL}})\in{\mathscr{S}^{(i)}}={\mathbb{N}^{M^{(i)}_{KL}}_0},$
which is a multi-index set with $M^{(i)}_{KL}$ components and
$|{\bm\alpha}|=\alpha_1+\alpha_2+\cdots+\alpha_{M^{(i)}_{KL}}$.
By \cite{PC1938}, any function $f\in L^2_{\bm\xi}$ has a Polynomial
Chaos (PC)  expansion
as
\begin{equation}\label{equation:gPC}
  f(\bm\xi)=\sum_{\bm\alpha\in{\mathscr{S}^{(i)}}}f_{\bm\alpha}\psi^{(i)}_{\bm\alpha}({\bm\xi}).
  \end{equation}
 In our computation, we truncate the series in \EQ{gPC} to a finite
 number terms as
\begin{equation}\label{equation:gPCA}
  f(\bm\xi)\approx f_{PC_d} (\bm\xi)=\sum_{\bm\alpha\in{\mathscr{S}^{(i)}_d}}f_{\bm\alpha}\psi^{(i)}_{\bm\alpha}({\bm\xi}),
  \end{equation}
where $d$ is a given nonnegative integer and  the finite set of
multi-index $\mathscr{S}^{(i)}_d$ is defined as
\begin{equation}\label{equation:sd}\mathscr{S}^{(i)}_d=\{{\bm\alpha}\in
  \mathbb{N}^{M^{(i)}_{KL}}_0:|{\bm\alpha}|\leq d\}.
 \end{equation}
$n^{(i)}_{\xi}$, the dimension of the space 
$\mathscr{S}^{(i)}_d$, equals to 
$\begin{pmatrix}
M^{(i)}_{KL}+d\\
d
\end{pmatrix}$.

Let  $A^{(i)}_L(\theta)$ be one of the subdomain local matrices. We
will 
approximate it by a truncated PC expansion
as follows 
\begin{align}\label{equation:pc}
A^{(i)}_{L}(\theta)\approx {A^{(i)}_{L,PC_d}}(\theta)=
\sum\limits_{{\bm\alpha}\in {\mathscr{S}}^{(i)}_d}
A^{(i)}_{L,{\bm\alpha}}\psi_{{\bm\alpha}}(\theta),
\end{align}
where $A^{(i)}_{L,{\bm\alpha}}=\langle
A^{(i)}_{L}\psi_{\bm\alpha}\rangle$ is the coefficient of the
PC  expansion with Hermite polynomial
$\psi_{\bm\alpha}$. Recall $\langle
\cdot\rangle$ is the expectation defined in \EQ{expectation}.  There are several ways to estimate the
coefficients $A^{(i)}_{L,{\bm\alpha}} $. The computation of 
$A^{(i)}_{L,{\bm\alpha}}$ for $A_{II}^{(i)^{-1}}$ and
$A_{\Gamma I}^{(i)}$ have been studied using stochastic Galerkin (SG)
in \cite{Contreras2018B} and using stochastic collocation (SC) in
\cite{stochasticpRMCM2021}.  In next subsection, we will discuss in
details about how we can compute the coefficients in \EQ{pc} for those
subdomain local matrices related to the BDDC preconditioners using
both SG and SC methods. 

 \subsection{Stochastic BDDC Preconditioner}
 For the BDDC preconditioners, we need to approximate  subdomain local
 $A_{cr}^{(i)}(\theta)$,  the Cholesky factors of
 $A_{rr}^{(i)}(\theta)$, and
$A_{cc}^{(i)}(\theta)$, namely we need to calculate the PC
coefficients in \EQ{pc} for these matrices.  We will construct these
coefficients using both SG and SC methods.

\subsubsection{SG} \label{sec:SG}
Given any
{$\left\{\xi_m^{(i)}\right\}_{i=1}^{M_{KL}^{(i)}}$,
  $a^{(i)}_{M_{KL}}(\x,\theta))=\sum\limits_{m=1}^{M_{KL}^{(i)}}\sqrt{\lambda^{(i)}_m}a^{(i)}_{m}(\x)\xi^{(i)}_m(\theta)$. }
  We consider the PC expansion of
$\exp(a^{(i)}_{M_{KL}}(\x,\theta))$ 
\begin{align}\label{equation:expaKL}
\exp(a^{(i)}_{M_{KL}})&=\sum\limits_{{\bm\alpha}\in \mathscr{S}^{(i)}}
a_\balpha(x)\psi^{(i)}_{\bm\alpha},
\end{align}
where 
$a_\balpha(x)=\langle \exp(a^{(i)}_{M_{KL}})\psi^{{(i)}}_{\bm\alpha}\rangle$. 

Plugging \EQ{expaKL} in \EQ{Aij} and  using \EQ{aKL}, we {can approximate} the
subdomain local stiffness matrix $A^{(i)}(\theta)$ as 
{{\begin{equation}\label{equation:eqA}
\begin{aligned}
  A_{st}^{(i)}(\theta)&\approx
  \int_{\Omega^{(i)}} \exp(a^{(i)}_{M_{KL}})\nabla{\phi_s}\nabla{\phi_t} d\x\\
&=\sum\limits_{{\bm\alpha}\in\mathscr{S}^{(i)}}\psi^{{(i)}}_{\bm\alpha}\int_{\Omega^{(i)}} \prod_{m=1}^{M^{(i)}_{KL}}\left\langle\exp\left(\sqrt{\lambda^{(i)}_m}a^{(i)}_m(\x)\xi^{(i)}_m\right)\psi^{{(i)}}_{\alpha_m}(\xi^{(i)}_{m})\right\rangle\nabla{\phi_s}\nabla{\phi_t} dx.
\end{aligned}
\end{equation}}}
Define {{$A^{(i)}_{\bm\alpha}$ to be the matrix with entries $A^{(i)}_{st,\bm\alpha}$ as 
$$A^{(i)}_{st,\bm\alpha}=\int_{\Omega^{(i)}} \prod_{m=1}^{M^{(i)}_{KL}}\left\langle\exp\left(\sqrt{\lambda^{(i)}_m}a^{(i)}_m(\x)\xi^{(i)}_m\right)\psi^{{(i)}}_{\bm\alpha}(\xi^{(i)}_{\bm\alpha})\right\rangle\nabla{\phi_s}\nabla{\phi_t} dx,$$}}
and we can  approximate $A^{(i)}(\theta)$ as  
\begin{align}\label{equation:Apc}
A^{(i)}(\theta)\approx A_{KL}^{(i)}(\theta)=\sum\limits_{{\bm\alpha}\in\mathscr{S}^{(i)}}A^{(i)}_{\bm\alpha}\psi^{{(i)}}_{\bm\alpha}.
\end{align}

By taking the corresponding parts from \EQ{Apc},  we obtain the
following approximations for the matrices $A^{(i)}_{rr}, A^{(i)}_{cr}$ and $A^{(i)}_{cc}$ in \EQ{preconditioned}
as
\begin{align}\label{equation:allmpc}
A^{(i)}_{rr}\approx A^{(i)}_{rr, KL},\quad  A^{(i)}_{cr}\approx
  A^{(i)}_{cr, KL}, \quad A^{(i)}_{cc}\approx A^{(i)}_{cc, KL},
\end{align}
with
\begin{align*}
A^{(i)}_{rr, KL}=\sum\limits_{{\bm\alpha}\in\mathscr{S}^{(i)}}A^{(i)}_{rr,{\bm\alpha}}\psi^{(i)}_{\bm\alpha},\quad
  A^{(i)}_{cr, KL}=\sum\limits_{{\bm\alpha}\in\mathscr{S}^{(i)}}A^{(i)}_{cr,{\bm\alpha}}\psi^{(i)}_{\bm\alpha},
  \quad
  A^{(i)}_{cc, KL}=\sum\limits_{{\bm\alpha}\in\mathscr{S}^{(i)}}A^{(i)}_{cc,{\bm\alpha}}\psi^{(i)}_{\bm\alpha}.
\end{align*}
Here $A^{(i)}_{rr,{\bm\alpha}}=\begin{bmatrix}
{A^{(i)}_{\bm\alpha}}_{II}  & {{A^{(i)^T}_{\bm\alpha}}_{\Delta I}}\\
{A^{(i)}_{\bm\alpha}}_{\Delta I}   &  {A^{(i)}_{\bm\alpha}}_{\Delta \Delta}  \\
\end{bmatrix}$, $A^{(i)}_{cr,{\bm\alpha}}=
\begin{bmatrix}
{A^{(i)}_{\bm\alpha}}_{\Pi I}\quad {A^{(i)}_{\bm\alpha}}_{\Pi\Delta}
\end{bmatrix}$,
and $A^{(i)}_{cc,{\bm\alpha}}=
{A^{(i)}_{\bm\alpha}}_{\Pi\Pi}$.



In order to construct the approximation of $ \Stilde^{-1}_\Gamma$, defined in
\EQ{StInverse}, we usually form the Cholesky factor of ${A^{(i)}_{rr}}$  in the
deterministic BDDC algorithm.  The product of 
${A^{(i)}_{rr}}^{-1}$ and a vector is calculated by  a forward and a backward
substitutions. By \EQ{phi},
we also need to calculate  $ A_{rr}^{(i)^{-1}}(\theta)
A_{cr}^{(i)^T}(\theta)$. 
When many samples are needed in the computation, it will be
more efficient to form the PC approximations   of $
A_{rr}^{(i)^{-1}}(\theta)$ and $ A_{rr}^{(i)^{-1}}(\theta)
A_{cr}^{(i)^T}(\theta)$ directly. 

In order to do that, we first discuss how we  form the PC expansion of
$Y=A_{rr}^{(i)^{-1}}(\theta)v(\theta)$, where {$v(\theta)$} is a given
vector. We assume that $Y$ and $v(\theta)$  have the 
PC expansions
as follows 
\begin{equation}\label{equation:pcvY}
  Y=\sum\limits_{{\bm\alpha}\in \mathscr{S}^{(i)}}
Y_{\bm\alpha}\psi^{(i)}_{\balpha},\quad  v(\theta)= \sum\limits_{{\bm\alpha}\in \mathscr{S}^{(i)}}
v_{\bm\alpha}\psi^{(i)}_{\balpha},
\end{equation}
where 
$Y_{\bm \alpha}=\langle Y
\psi^{(i)}_{\bm\alpha}\rangle$, which  we need to compute. 

We  rewrite $Y=A_{rr}^{(i)^{-1}}(\theta)v(\theta)$ as $
A_{rr}^{(i)}(\theta)Y=v(\theta)$ and replace $A_{rr}^{(i)}(\theta)$ by
its approximation $A_{rr,KL}^{(i)}(\theta)$ defined in \EQ{allmpc}.  We
have 
\begin{equation}\label{equation:Yeq}
  A_{rr,KL}^{(i)}(\theta)Y=v(\theta).
  \end{equation}
Let $\bm\beta(l) \in \mathscr{S}^{(i)}_d$ for $
l=1,2,\cdots,n^{(i)}_{\xi}$.  
Multiplying $\psi^{(i)}_{\bm\beta(l)}$ on both sides of
\EQ{Yeq},  taking the expectation, and using \EQ{allmpc} and
\EQ{pcvY}, we have 
\begin{align*}
\sum\limits_{{\bm\alpha}\in\mathscr{S}^{(i)}_{2d}}\sum\limits_{k=1}^{n^{(i)}_{\xi}}A^{(i)}_{rr,{\bm\alpha}}Y_{\bm\beta(k)}
\langle\psi^{(i)}_{\bm\alpha}\psi^{(i)}_{\bm\beta(k)}\psi^{(i)}_{\bm\beta{(l)}}\rangle=\sum\limits_{{\bm\alpha}\in\mathscr{S}^{(i)}_{d}} v_{\bm\alpha}\langle\psi^{(i)}_{\bm\alpha}\psi^{(i)}_{\bm\beta(l)}\rangle.
\end{align*}
Here
we can truncate the PC expansion of $A^{(i)}_{rr}$ to
${{\bm\alpha}\in\mathscr{S}^{(i)}_{2d}}$  and $v(\theta)$ to $
{{\bm\alpha}\in\mathscr{S}^{(i)}_{d}}$ without loss any accuracy and obtain
\begin{equation}\label{equation:eqIs}
A^{(i)}_{rrs}Y_{rrs}=v_{rrs},
\end{equation}
where
\begin{align}
\label{equation:Arrs}
A^{(i)}_{rrs}&=\sum\limits_{\balpha\in\mathscr{S}^{(i)}_{2d}}
\begin{bmatrix}
\langle\psi^{(i)}_{\bm\alpha}\psi^{(i)}_{\bm\beta{(1)}}\psi^{(i)}_{{\bm\beta{(1)}}}\rangle
&
\langle\psi^{(i)}_{\bm\alpha}\psi^{(i)}_{\bm\beta{(2)}}\psi^{(i)}_{{\bm\beta{(1)}}}\rangle& \cdots\langle\psi^{(i)}_{\bm\alpha}\psi^{(i)}_{\tiny{{\bm\beta}{{(n^{(i)}_{\xi})}}}}\psi^{(i)}_{{\bm\beta{(1)}}}\rangle\\
\vdots&\ddots&\vdots\\
\langle\psi^{(i)}_{\bm\alpha}\psi^{(i)}_{\bm\beta{(1)}}\psi^{(i)}_{{\bm\beta({n^{(i)}_{\xi})}}}\rangle&\langle\psi^{(i)}_{\bm\alpha}\psi^{(i)}_{\bm\beta{(2)}}\psi^{(i)}_{{\bm\beta{(n^{(i)}_{\xi})}}}\rangle&\cdots\langle\psi^{(i)}_{\bm\alpha}\psi^{(i)}_{\bm\beta{(n^{(i)}_{\xi})}}\psi^{(i)}_{{\bm\beta{(n^{(i)}_{\xi})}}}\rangle
\end{bmatrix}
\otimes A^{(i)}_{rr,{\bm\alpha}},\\
\label{equation:Yrrs}
Y_{rrs}&=\begin{bmatrix}
Y_{\bm\beta(1)}\\
Y_{\bm\beta(2)}\\
\vdots\\
Y_{\bm\beta(n^{(i)}_{\xi})}\\
\end{bmatrix},
\quad
 v_{rrs}=
  \sum\limits_{k=1}^{n^{(i)}_{\xi}}
\begin{bmatrix}
v_{\bm\beta{(k)}}\langle\psi_{\bm\beta(k)}\psi_{\bm\beta(1)}\rangle\\
 v_{\bm\beta{(k)}}\langle\psi_{\bm\beta(k)}\psi_{\bm\beta(2)}\rangle\\
\vdots\\
 v_{\bm\beta{(k)}}\langle\psi_{\bm\beta(k)}\psi_{\bm\beta(n^{(i)}_{\xi})}\rangle\\
\end{bmatrix}=\sum\limits_{k=1}^{n^{(i)}_{\xi}}e_k\otimes v_{\bm\beta{(k)}}.
\end{align}
Here 
$e_k$ is a vector  with a size  $n^{(i)}_{\xi}$  and all zero components except the $k$th component  $1$.

Solving \EQ{eqIs}, we can obtain $Y_{rrs}$ and the truncated PC coefficients of $Y$
\begin{equation}\label{equation:tpcY}
  Y\approx Y_{PC_d}=\sum\limits_{{\bm\alpha}\in \mathscr{S}^{(i)}_d}
Y_{\bm\alpha}\psi^{(i)}_{\balpha}=\sum\limits_{k=1}^{n^{(i)}_{\xi}}Y_{\bm\beta(k)}\psi ^{(i)}_{\bm\beta(k)}.
  \end{equation}

We now consider how to form the PC approximation for
{$X=A^{(i)^{-1}}_{rr}A^{(i)^T}_{cr}$}, which is needed in \EQ{phi} and
\EQ{Spi0}. We denote that the number of columns
of {$A_{cr}^{(i)^T}(\theta)$} by $n^{(i)}_c$, which is a small number of the primal
constraints chosen in Subdomain $\Omega^{(i)}$.   We can rewrite
\begin{equation}\label{equation:eqX}
  A^{(i)}_{rr}X=A^{(i)^T}_{cr}
  \end{equation}
and obtain the truncated PC approximation of $X$ column-wise for
${{\bm\alpha}\in \mathscr{S}^{(i)}_d}$
by taking the corresponding column
of $A^{(i)^T}_{cr}$ in \EQ{eqX}.  By \EQ{allmpc}, we have the PC
expansion of $A_{cr}^{(i)^T}$. Solving \EQ{eqIs} $n^{(i)}_c$ times
with $v(\theta)$ equals to the different column of $A^{(i)^T}_{cr}$
and obtain the truncated PC approximation of
$A^{(i)^{-1}}_{rr}A^{(i)^T}_{cr}$. Similarly, we can obtain the
truncated  PC approximation of $A^{(i)^{-1}}_{rr}${as $A^{(i)^{-1}}_{rr,PC_d}$} by setting
$v(\theta)$ equal to the columns of the $n^{(i)}_r\times n^{(i)}_r $
identity matrix $I^{(i)}_{rr}$, where
$n^{(i)}_r$ is the number of columns of $A^{(i)}_{cr}$.
We note that 
the columns of $I^{(i)}_{rr}$ are  constant vectors and therefore the
corresponding $v_{rrs}$ is $e_1\otimes g$, where $g$ denotes a column
of $I^{(i)}_{rr}$.

Next, we consider how to form the truncated  PC approximation  of
$Z= A^{(i)}_{cr}{A^{(i)}_{rr}}^{-1}A^{(i)}_{rc}$, which is needed in
\EQ{Spi0}. Let $Y={A^{(i)}_{rr}}^{-1}A^{(i)}_{rc}$ and define
 the truncated PC approximations of $Z$ and $Y$ for ${{\bm\alpha}\in\mathscr{S}^{(i)}_{d}}$  as
\begin{equation}\label{equation:apczy}
Z\approx Z_{PC_d}=\sum\limits_{k=1}^{n^{(i)}_{\xi}}Z_{\bm\beta(k)}\psi^{(i)}_{\bm\beta(k)},
\quad
Y\approx Y_{PC_d}=\sum\limits_{k=1}^{n^{(i)}_{\xi}}Y_{\bm\beta(k)}\psi^{(i)}_{\bm\beta(k)}.
\end{equation}
We have
\begin{equation}\label{equation:Zpc}
Z=A^{(i)}_{cr}{A^{(i)}_{rr}}^{-1}A^{(i)}_{rc}=A^{(i)}_{cr}Y.
\end{equation}
Taking expectation with $\psi^{(i)}_{\bm\beta(l)}$ for  $l=1,\cdots,n_{\xi}^{(i)}$ both sides of \EQ{Zpc} and using 
\EQ{allmpc} and \EQ{apczy}
leads to 
\begin{equation*}
\begin{bmatrix}
 Z_{\bm\beta{(1)}}\\
Z_{\bm\beta{(2)}}\\
\vdots\\
Z_{\bm\beta{(n^{(i)}_{\xi})}}
\end{bmatrix}=A^{(i)}_{crs}Y_{rrs},
\end{equation*}
where  $Y_{rrs}$ is in \EQ{Yrrs} and
\begin{equation}
\begin{aligned}
A^{(i)}_{crs}&=\sum\limits_{\alpha\in\mathscr{S}^{(i)}_{2d}}
\begin{bmatrix}
\langle\psi^{(i)}_{\bm\alpha}\psi^{(i)}_{\bm\beta{(1)}}\psi^{(i)}_{{\bm\beta{(1)}}}\rangle
&
\langle\psi^{(i)}_{\bm\alpha}\psi^{(i)}_{\bm\beta{(2)}}\psi^{(i)}_{{\bm\beta{(1)}}}\rangle& \cdots\langle\psi^{(i)}_{\bm\alpha}\psi^{(i)}_{\tiny{{\bm\beta}{{(n^{(i)}_{\xi})}}}}\psi^{(i)}_{{\bm\beta{(1)}}}\rangle\\
\vdots&\ddots&\vdots\\
\langle\psi^{(i)}_{\bm\alpha}\psi^{(i)}_{\bm\beta{(1)}}\psi^{(i)}_{{\bm\beta({n^{(i)}_{\xi})}}}\rangle&\langle\psi^{(i)}_{\bm\alpha}\psi^{(i)}_{\bm\beta{(2)}}\psi^{(i)}_{{\bm\beta{(n^{(i)}_{\xi})}}}\rangle&\cdots\langle\psi^{(i)}_{\bm\alpha}\psi^{(i)}_{\bm\beta{(n^{(i)}_{\xi})}}\psi^{(i)}_{{\bm\beta{(n^{(i)}_{\xi})}}}\rangle
\end{bmatrix}
\otimes A^{(i)}_{cr,\bm\alpha}.
\end{aligned}
\end{equation}
We note that we only need to truncate the PC approximation for
$A_{cr}^{(i)}$ in \EQ{allmpc} to
${{\bm\alpha}\in\mathscr{S}^{(i)}_{2d}}$ without loss any accuracy.

Finally, 
let  \begin{equation}\label{equation:localSpi}
  S_\Pi^{(i)}=A_{cc}^{(i)}-{A_{cr}^{(i)}A^{(i)^{-1}}_{rr}A_{cr}^{(i)^T}},\end{equation}
 the subdomain local
contribution to the coarse matrix $S_\Pi$ defined in \EQ{Spi0}.  We can obtain the PC approximation  of $S_\Pi^{(i)}$ as $S^{(i)}_{\Pi,PC_d}$, {where we 
 combine the PC coefficients of $A^{(i)}_{cc}$, defined in
 \EQ{allmpc}, and the PC coefficients of $A_{cr}^{(i)}A^{{(i)}^{-1}}_{rr}A_{cr}^{{(i)}^T}$.}

\begin{remark}
In the computation described above,  the terms we use in  the truncated PC
approximations for the subdomain matrices in \EQ{allmpc} can be
different.  For $A_{cc}^{(i)}$, we only need the PC approximation to
${{\bm\alpha}\in\mathscr{S}^{(i)}_{d}}$. But we need the truncations
of the 
matrices $A_{cr}^{(i)}$ and  $A_{rr}^{(i)}$ to
${{\bm\alpha}\in\mathscr{S}^{(i)}_{2d}}$. 
\end{remark}

\begin{remark}\label{remark:Cspd}
  {By our SG construction of $S^{(i)}_{\Pi}$, we cannot ensure the
  resulting global $S_\Pi$ is positive definite.  The difference
  between our stochastic approximate $S_{\Pi,PC_d}$ and the exact
  coarse component $S_{\Pi}$,
  which is positive definite, depends on the stochastic dimensions
  (the number of  $KL$ terms  in \EQ{aKL}) and the degree of the PC
  approximation $d$ in \EQ{sd} for each component. 
  In our
  numerical experiments, most $S_{\Pi,PC_d}$ are positive definite except one
  single case. For this case, when we increase $d$, it becomes
  positive definite.  We also plot the Frobenius norm of the difference between $S_{\Pi}$ and $S_{\Pi,PC_d}$ in
Figure \ref{fig:ScErr} with the changes of the local $KL$ terms and
$d$ for this setup.}
 \end{remark}


By the similar process of obtaining  the PC approximation of 
{$S_\Pi^{(i)}=A^{(i)}_{cc}-A_{cr}^{(i)}A^{(i)^{-1}}_{rr}A_{cr}^{(i)^T}$}, we can
obtain the PC approximation of {$S_\Gamma^{(i)} =A^{(i)}_{\Gamma\Gamma}-
A^{(i)}_{\Gamma I}{A^{(i)}_{II}}^{-1}A^{(i)^T}_{\Gamma I}$}, which is the subdomain
local Schur complement. We can use this to obtain the
approximation of the global Schur complement $S_\Gamma$ in
\EQ{preconditioned}. When we use this approximated $S_\Gamma$, we did
not solve the exact subdomain interface problem. The approximation
error has been studied in \cite{Contreras2018B}.

\subsubsection{SC}
We can also use a stochastic collocation method to obtain the PC
approximation  of the subdomain matrices $A_{cr}^{(i)}(\theta)$,
$A_{rr}^{(i)}(\theta)$ and $A_{cc}^{(i)}(\theta)$.  Recall that the PC
coefficient $A^{(i)}_{L,\balpha}$, defined in \EQ{pc}, is equal to $\langle
A^{(i)}_{L}\psi^{(i)}_{\bm\alpha}\rangle$. We can use a $M^{(i)}_{KL}$
dimensional quadrature formula to approximate the expectation $\langle
A^{(i)}_{L}\psi^{(i)}_{\bm\alpha}\rangle$ as
\begin{align}\label{equation:quadA}
A^{(i)}_{L,\bm\alpha}\approx\sum\limits_{q=1}^{Q^{(i)}}A^{(i)}_{L,\bm\alpha,q}\psi^{(i)}_{\bm\alpha}(\xi^{(i)}_q)w^{(i)}_q,
\end{align}
where $Q^{(i)}$ is the number of the quadrature points, $w^{(i)}_q$ is the weights, and 
$A^{(i)}_{L,\bm\alpha,q}$ is the realization matrix with a realization
of $\exp(a^{(i)}(\x,\theta))$ using \EQ{aKL} with $\xi^{(i)}_q$.
We can obtain the truncated PC approximations for $A_{cc}^{(i)}$,
$A_{cr}^{(i)}$, and $A_{rr}^{(i)}$ for
${\bm\alpha\in{\mathscr{S}^{(i)}_d}}$ as
\begin{align}\label{equation:allmpc_sc}
{A^{(i)}_{rr,PC_d}=}
 \sum\limits_{{\bm\alpha}\in\mathscr{S}^{(i)}_d}A^{(i)}_{rr,{\bm\alpha}}\psi^{(i)}_{\bm\alpha},\quad 
{A^{(i)}_{cr,PC_d}=}  \sum\limits_{{\bm\alpha}\in\mathscr{S}^{(i)}_d}A^{(i)}_{cr,{\bm\alpha}}\psi^{(i)}_{\bm\alpha}, \quad {A^{(i)}_{cc,PC_d}=} \sum\limits_{{\bm\alpha}\in\mathscr{S}^{(i)}_d}A^{(i)}_{cc,{\bm\alpha}}\psi^{(i)}_{\bm\alpha}.
\end{align}
In our BDDC preconditioner, defined in \EQ{preconditioned},
we need  $A^{(i)^{-1}}_{rr}$. We can use \EQ{quadA} to form the PC
approximation of the Cholesky factor of $A^{(i)}_{rr}$ directly.  We have
$A^{(i)}_{rr,\balpha,q}=R^{(i)^T}_{{rr,\balpha,q}} R^{(i)}_{{rr,\balpha,q}}$ and use \EQ{quadA} to
obtain the truncated PC approximation of the Cholesky factors of  $A^{(i)}_{rr}$ as
{\begin{align}\label{equation:R_rr}
{R^{(i)}_{rr}}\approx{R^{(i)}_{rr,PC_d}=}
  \sum\limits_{{\bm\alpha}\in\mathscr{S}^{(i)}_d} \left(
                  \sum\limits_{q=1}^{Q^{(i)}}
                  R^{(i)}_{rr,\balpha,q}\psi^{(i)}_{\bm\alpha}(\xi^{(i)}_q)w^{(i)}_q\right) \psi^{(i)}_{\bm\alpha}
                  .
\end{align}}
To avoid the 
possible non-uniqueness of the Cholesky factor,  for each realization
$A^{(i)}_{rr,\balpha,q}$, which is symmetric positive definite,  we
make sure that the diagonal elements of the Cholesky factor
$R_{{rr,\balpha,q}} $ are
positive. 

In \EQ{Spi0}, we need to form the global coarse matrix
$S_{\Pi}(\theta)$.  If the approximations of the subdomain local matrices 
$A_{cc}^{(i)}$, $A_{cr}^{(i)}$, and  $R^{(i)}_{rr}$ are not accurate
enough, the positive definiteness of the approximated
$S_{\Pi}(\theta)$ cannot be guaranteed. Following \cite[Section
3.4.2]{stochasticpRMCM2021},
for each realization
$S_{\Pi, \balpha,q}^{(i)}$, the subdomain local contribution defined
in \EQ{localSpi}, we 
do
the following eigen-decomposition $S^{(i)}_{\Pi,\balpha,q}=QDQ^T$. Here
$D$ is the positive diagonal  matrix with the eigenvalues of
$S^{(i)}_{\Pi,\balpha,q}$ and $Q$ is the orthogonal matrix with the
eigenvectors as the columns.  We define
$H^{(i)}_{rr,\balpha,q}=QD^{\frac{1}{2}}Q^T$ and use \EQ{quadA} to
obtain the truncated PC approximation of $H^{(i)}_\Pi$ as
{
\begin{align}\label{equation:H_sc}
{H^{(i)}_{\Pi}}\approx{{H^{(i)}_{\Pi,PC_d}}=}
  \sum\limits_{{\bm\alpha}\in\mathscr{S}^{(i)}_d} \left(
                  \sum\limits_{q=1}^{Q^{(i)}}
                  H^{(i)}_{\Pi,\balpha,q}\psi^{(i)}_{\bm\alpha}(\xi^{(i)}_q)w^{(i)}_q\right) \psi^{(i)}_{\bm\alpha}
\end{align}
}
The subdomain matrix $S_\Pi^{(i)}\approx
H^{(i)}_{\Pi}H^{(i)^T}_{\Pi}$, {which is positive definite.}

Similarly, we can also obtain the truncated PC approximation for the
subdomain local Schur complement {$S^{(i)}_\Gamma$}. 

\begin{algorithm}
\caption{The Stochastic BDDC Algorithm 
}\label{alg:cap}
\begin{algorithmic}

  \State{{{Offline}}:}
  \State{Generate the PC coefficients: $A^{(i)}_{cc,\bm\alpha}$,
    $A^{(i)}_{cr,\bm\alpha}$, $A^{(i)}_{\Gamma\Gamma,\bm\alpha}$ ( inexact $S_\Gamma$),
    $A^{(i)}_{\Gamma I,\bm\alpha}$ (inexact $S_\Gamma$)
    \begin{itemize}
      \item SG: 
    $A^{(i)^{-1}}_{rr,\bm\alpha}$,
    $\left(A^{(i)^{-1}}_{rr}A^{(i)}_{cr}\right)_{\bm\alpha}$, 
    $S_{\Pi,,\bm\alpha}^{(i)}$,  and 
    $\left(A^{(i)^{-1}}_{II}A^{(i)}_{I\Gamma}\right)_{\bm\alpha}$
    (inexact $S_\Gamma$)
    \item SC: 
$R^{(i)}_{rr,\bm\alpha}$,  
$H_{\Pi,\bm\alpha}^{(i)}$, and $R^{(i)}_{II,\bm\alpha}$ (inexact $S_\Gamma$) 
   
\end{itemize}
}
\State{Online:}
\For{each sample $\theta$}
\For{each subdomain $\Omega^{(i)}_i$}
\State{Get the local Hermite basis in each subdomain
  $\psi_{\bm\alpha}(\xi^{(i)}_{\bm\alpha})$ with
  $\xi^{(i)}_{\bm\alpha}$ defined as \EQ{thetaprojection}}
\State{Generate stiffness matrix in each subdomain ${A}^{(i)}$ using
  finite element calculation or the PC approximation} 

\State{Generate the {truncated} PC expansion matrices 
     \begin{itemize}
      \item SG: 
    {$A^{(i)^{-1}}_{rr,PC_d}$,
    $(A^{(i)^{-1}}_{rr}A^{(i)}_{cr})_{PC_d}$, and
    $S_{\Pi,PC_d}^{(i)}$;
    \item SC: 
$R^{(i)}_{rr,PC_d}$,  $A^{(i)}_{cr,PC_d}$, and
$H_{\Pi,PC_d}^{(i)}$}.
\end{itemize}
}
\EndFor
\State{Generate the stochastic  coarse problem defined in \EQ{Spi0}
  using the PC approximations of $S_\Pi^{(i)}$.}
\State{Use $CG$
  with the stochastic BDDC preconditioner
  to solve \EQ{preconditioned} }
\EndFor
\end{algorithmic}
\end{algorithm}

Our stochastic BDDC algorithms have be summarized in Algorithm \ref{alg:cap}.

\section{{Analysis of the stochastic BDDC algorithm}}

Given a sample $\theta$, recall the partial assembled matrix
$\widetilde{A}(\theta)$ is defined in \EQ{anothertwoparts}. In this
section, most of our matrices are for a given $\theta$. To make our
notation simpler, we do not write $\theta$ explicitly.  The partially
assembled Schur complement $\Stilde_\Gamma$ is defined in \EQ{schurT}
and $\Stilde^{-1}_\Gamma$ in the BDDC preconditioner is defined in
\EQ{StInverse}.

In previous section, we discuss how we can construct our stochastic approximations of
$\Stilde_\Gamma$ and $\Stilde^{-1}_\Gamma$, denoted by $\Stilde_{\Gamma,F}$ and $\Stilde^{-1}_{\Gamma,P}$,
respectively. The stochastic
approximation of the global Schur complement $S_\Gamma$ is denoted as
$S_{\Gamma,F}=\Rtilde^T\Stilde_{\Gamma,F}\Rtilde$. {We provide the condition number estimates of the stochastic BDDC algorithms constructed by the SG method.   A similar approach can be applied for those constructed by the SC method as well.
Let $\widetilde{A}_{\Gamma\Gamma,F}=\widetilde{A}_{\Gamma\Gamma,PC_d}$, and 
{$\widetilde{A}_{\Gamma I,F}A^{-1}_{II,F}\widetilde{A}^T_{\Gamma I,F}
=(\widetilde{A}_{\Gamma I}A^{-1}_{II}\widetilde{A}^T_{\Gamma I})_{PC_d},$ where $(\widetilde{A}_{\Gamma I}A^{-1}_{II}\widetilde{A}^T_{\Gamma I})_{PC_d}$ is generated similarly as $Z_{PC_d}$ in \EQ{apczy} by replacing $Z$ as
$\widetilde{A}_{\Gamma I}A^{-1}_{II}\widetilde{A}^T_{\Gamma I}$ and $Y$ as $A^{-1}_{II}\widetilde{A}^T_{\Gamma I}$, respectively. }}Let  $A_{rr,P}^{{(i)}^{-1}}=A_{rr,PC_d}^{{(i)}^{-1}}$,
$ \Phi_P= R_{\Gamma \Pi}^T - R^T_{\Gamma \Delta}\sum_{i=1}^{N}
  \left[ \vvec{0} ~ R^{(i)^T}_{\Delta}
 \right](A_{rr}^{{(i)}^{-1}}A_{cr}^{(i)^T})_{PC_d}
R^{(i)}_{\Pi},$ $S_{\Pi,P}^{-1}=S_{\Pi,PC_d}^{-1},$
 and $S_{\Pi,PC_d}=\sum\limits_{i=1}^NR^{(i)^T}_{\Pi}S^{(i)}_{\Pi,PC_d}R^{(i)}_{\Pi}.$ 
We have
\begin{equation}\label{equation:StF}
  \Stilde_{\Gamma,F}=\widetilde{A}_{\Gamma\Gamma,F}-
{\widetilde{A}_{\Gamma I,F}A^{-1}_{II,F}\widetilde{A}^{T}_{\Gamma I,F}}
  \end{equation}
  and
  \begin{equation}\label{equation:StP}
  \Stilde^{-1}_{\Gamma,P}=R_{\Gamma\Delta}^T\left(\sum_{i=1}^{N}
 \left[ \vvec{0} ~ R^{(i)^T}_{\Delta}
 \right] {A_{rr,P}^{{(i)}^{-1}}} \left[ \begin{array}{c} \vvec{0}  \\
R^{(i)}_{\Delta} \end{array} \right] \right)R_{\Gamma \Delta} + \Phi_P
S_{\Pi,P}^{-1} \Phi_P^T. 
  \end{equation}

{ In the rest of the section, we will analyze the
  condition number of our BDDC preconditioned system. We first
  approximate $\kappa$ in \EQ{pde1} using a global KL
  approximation $\exp(a_{M_{KL}})$, defined in
  \EQ{globalKL}.  The 
  local matrices $A_L^{(i)}$,  the different Schur complements
  $S_\Gamma$, $\Stilde_\Gamma$, and
  $\Stilde^{-1}_\Gamma$ are constructed using $a_{M_{KL}}$.  The
  stochastic approximations $\Stilde^{-1}_{\Gamma,P}$ and
  $\Stilde_{\Gamma,F}$ are constructed using the local KL
  approximation {$a_{M_{KL}}^{(i)}$}, defined in \EQ{aKL}, and their PC
  approximations, constructed in Section \ref{sec:SG}.  We will prove that, for a fixed mesh with
the parameters $H$ (the size of the subdomain) and $h$ (the size of
the mesh), 
the probability of {the event such that the BDDC preconditioned
operator $\Stilde^{-1}_{\Gamma,P}\Stilde_{\Gamma,F}$ is well
conditioned}, will go to $1$ as the global and local KL terms $M_{KL}$,
$M^{(i)}_{KL}$, defined in \EQ{globalKL} and \EQ{aKL} respectively,
and the PC degree $d$, defined in \EQ{sd},  go to infinity.  }


{Let 
$
c_0=\langle \exp(a^{(i)}(\x,\theta))\rangle$ denote the expectation of
$\exp(a^{(i)}(\x,\theta))$. Under the 
covariance matrix defined in \EQ{cov}, $c_0=e^{\frac12 \sigma^2}$ which is independent of $\x$.}}
On the other hand,  {{using the KL expansion \EQ{LML}}}, we  have
\begin{equation}\label{equation:c0}
  c_0=\langle \exp(a^{(i)}(\x,\theta))\rangle=\langle \exp(\sum\limits_{m=1}^{\infty}\sqrt{\lambda^{(i)}_m}a^{(i)}_m(\x)\xi^{{(i)}}_m)\rangle=\exp(\frac12\sum\limits_{m=1}^{\infty}\lambda^{(i)}_ma^{(i)^2}_m(\x)),
\end{equation}
therefore, {{$\sum\limits_{m=1}^{\infty}\lambda^{(i)}_ma^{(i)^2}_m(\x)$
is uniformly bounded.}} We have
\begin{equation}\label{equation:L2sum}
\sum\limits_{m=1}^{\infty}\lambda^{(i)}_m\|a^{(i)}_m\|^2_2= \sum\limits_{m=1}^{\infty}{\lambda^{(i)}_m}\int_{\Omega^{(i)}}{a^{{(i)}^2}_m(\x)}d\x=\int_{\Omega^{(i)}}\sum\limits_{m=1}^{\infty}{\lambda^{(i)}_m}{a^{{(i)}^2}_m(\x)}d\x<\infty.
\end{equation}

{Similarly, we have the result for the global KL terms
\begin{equation}\label{equation:gL2sum}
\sum\limits_{m=1}^{\infty}\lambda_m\|a_m\|^2_2< \infty.
  \end{equation}
}

We also have 
\begin{eqnarray}\label{equation:ec0}
&&{\sum\limits_{{\bm\alpha}\in {\mathscr{S}^{(i)}}}\frac{(2\ln(c_0))^{|\bm\alpha|}}{{\bm\alpha}!}
   }\nonumber\\
  &=&\sum\limits_{\alpha_1\in \mathbb{N}_0,\alpha_2\in \mathbb{N}_0,\cdots,\alpha_{M_{KL}^{(i)}}\in \mathbb{N}_0}\frac{(2\ln(c_0))^{\alpha_1}}{{\alpha_1}!}\frac{(2\ln(c_0))^{\alpha_2}}{{\alpha_2}!}\cdots \frac{(2\ln(c_0))^{ \alpha_{M_{KL}^{(i)}}}}{{\alpha_{M_{KL}^{(i)}}}!}\nonumber\\
&\leq&(\sum\limits_{\alpha_1\in \mathbb{N}_0}\frac{(2\ln(c_0))^{\alpha_1}}{\alpha_1!})^{M_{KL}^{(i)}}=e^{2\ln(c_0)M_{KL}^{(i)}}=c_0^{2 M_{KL}^{(i)}}<\infty.
\end{eqnarray}

Therefore, let $d_\Omega$ be the diameter of $\Omega$, given $\epsilon>0$, {$h_0>0$, } using \EQ{c0}, \EQ{L2sum}, and
\EQ{ec0}, we can make the following assumptions: for all $h>h_0$, 

\begin{assumption}\label{assump:PC}
  
  we assume that
  
\begin{enumerate*}[label =\arabic*.,itemjoin=\\]

\item {$M_{KL}$, $M_{KL}^{(i)}$ are large enough such that the KL
    truncation errors in \EQ{globalKLerror} and \EQ{localerror}  satisfy
{$\sum\limits_{m=M_{KL}+1}^{\infty}\lambda_m\|a_m\|^2_2\leq\epsilon^2 h_0^2(\frac{h_0}{d_\Omega})^6$ and $\sum\limits_{m=M^{(i)}_{KL}+1}^{\infty}\lambda^{(i)}_m\|a^{(i)}_m\|^2_2\leq\epsilon^2 h_0^2(\frac{h_0}{d_\Omega})^6$ ;}}
\item {Given $M^{(i)}_{KL}$}, $d$ is large enough such that
{${\sum\limits_{{\bm\alpha}\in {\mathscr{S}^{(i)}\backslash \mathscr{S}^{(i)}_d }}{c^2_0}\frac{(2\ln(c_0))^{|\bm\alpha|}}{{\bm\alpha}!}
\leq\epsilon\frac{1}{d_\Omega^2}}$.}

\end{enumerate*}
\end{assumption}

\begin{lemma}\label{pcestimate}
We have 
{\begin{equation}\label{equation:LPCd}
E\|{A^{(i)}_{L}}-{A^{{(i)}}_{L,PC_d}}\|^2_2\leq C{\epsilon}.
\end{equation}}
Recall $A^{(i)}_{L}$ is any subdomain local matrices constructed using
$a_{ML}$ and $A^{{(i)}}_{L,PC_d}$ is its PC approximation, defined in
\EQ{pc} and based on {$a_{M_{KL}}^{(i)}$}. 
\end{lemma}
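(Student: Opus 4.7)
The plan is to split the error into a Karhunen--Lo\`eve (KL) truncation part and a PC truncation part, and to control each using one half of Assumption 5.1. Introduce the intermediate matrix $\tilde A^{(i)}_L$ defined by the same local assembly as $A^{(i)}_L$ but using $a^{(i)}_{M_{KL}}$ (the local KL expansion) in place of $a_{M_{KL}}$ (the global one), so that $A^{(i)}_{L,PC_d}$ is literally the degree-$d$ PC truncation of $\tilde A^{(i)}_L$. Triangle inequality then gives
\begin{equation*}
E\|A^{(i)}_L - A^{(i)}_{L,PC_d}\|_2^2 \leq 2\,E\|A^{(i)}_L - \tilde A^{(i)}_L\|_2^2 + 2\,E\|\tilde A^{(i)}_L - A^{(i)}_{L,PC_d}\|_2^2,
\end{equation*}
and it suffices to bound each term by $C\epsilon/4$.

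For the first (KL) term I would bound $\|\cdot\|_2$ by the Frobenius norm and, entrywise, use $e^x-e^y=e^\zeta(x-y)$ together with Cauchy--Schwarz to estimate
\begin{equation*}
\bigl|\,[A^{(i)}_L-\tilde A^{(i)}_L]_{s,t}\bigr|\leq \bigl\|\exp(a_{M_{KL}})-\exp(a^{(i)}_{M_{KL}})\bigr\|_{L^2(\Omega^{(i)})}\cdot\|\nabla\phi_s\!\cdot\!\nabla\phi_t\|_{L^2(\Omega^{(i)})}.
\end{equation*}
The basis-gradient factor contributes $O(1/h)$ from each $\nabla\phi$ times the element volume, while the uniform bound $E\,e^{2a}\le c_0^{2}<\infty$ from \eqref{equation:c0} controls the exponential Lipschitz constant in $L^2(\Theta)$. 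Because $a^{(i)}_{M_{KL}}$ is obtained via the local $L^2$-projection in \eqref{equation:lxi}, the difference $a_{M_{KL}}-a^{(i)}_{M_{KL}}$ is bounded in expected $L^2(\Omega^{(i)})$ norm by the sum of the global and local KL tail sums $\sum_{m>M_{KL}}\lambda_m\|a_m\|_2^2$ and $\sum_{m>M^{(i)}_{KL}}\lambda^{(i)}_m\|a^{(i)}_m\|_2^2$. After summing $O((d_\Omega/h)^4)$ nonzero entries and collecting the $1/h^{2}$ factors, the specific scaling $\epsilon^2 h_0^2(h_0/d_\Omega)^6$ in Assumption 5.1 item 1 is precisely what is needed to absorb these powers of $h$ and $d_\Omega$ and leave a term bounded by $C\epsilon$.

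For the second (PC) term, I would exploit the explicit form of the Hermite PC coefficients of a log-normal. Writing $\exp(a^{(i)}_{M_{KL}})=\sum_{\bm\alpha\in\mathscr{S}^{(i)}}a_{\bm\alpha}(\x)\psi^{(i)}_{\bm\alpha}$, a direct computation (the same one that yields \eqref{equation:ec0}) gives
\begin{equation*}
a_{\bm\alpha}(\x)=c_0(\x)\prod_{m=1}^{M^{(i)}_{KL}}\frac{\bigl(\sqrt{\lambda^{(i)}_m}\,a^{(i)}_m(\x)\bigr)^{\alpha_m}}{\sqrt{\alpha_m!}},\qquad c_0(\x)=\exp\!\tfrac12\sum_m\lambda^{(i)}_m a^{(i)^2}_m(\x)\le c_0,
\end{equation*}
so by orthonormality of the $\psi^{(i)}_{\bm\alpha}$ the $L^2(\Theta)$ tail squared is
\begin{equation*}
\sum_{\bm\alpha\notin\mathscr{S}^{(i)}_d}a_{\bm\alpha}^2(\x)\le c_0^2\!\!\sum_{\bm\alpha\notin\mathscr{S}^{(i)}_d}\frac{\prod_m\bigl(\lambda^{(i)}_m a^{(i)^2}_m(\x)\bigr)^{\alpha_m}}{\bm\alpha!}\le c_0^2\!\!\sum_{\bm\alpha\notin\mathscr{S}^{(i)}_d}\frac{(2\ln c_0)^{|\bm\alpha|}}{\bm\alpha!},
\end{equation*}
where the pointwise identity $\sum_m\lambda^{(i)}_m a^{(i)^2}_m(\x)=2\ln c_0$ from \eqref{equation:c0} was used. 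Assumption 5.1 item 2 says this scalar tail is $\le\epsilon/d_\Omega^{2}$. Integrating against $\nabla\phi_s\!\cdot\!\nabla\phi_t$ on each element, summing the $O((d_\Omega/h)^4)$ nonzero entries, and collecting the $1/h^{2}$ factors from the gradients gives the desired bound $C\epsilon$ on $E\|\tilde A^{(i)}_L-A^{(i)}_{L,PC_d}\|_2^2$.

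The main obstacle is the bookkeeping of $h$- and $d_\Omega$-powers. The pointwise identity for the PC coefficients of a log-normal Gaussian field is the key algebraic step that converts the infinite-dimensional PC tail into the scalar sum appearing in Assumption 5.1 item 2, and the seemingly peculiar exponents in item 1 are exactly what is needed to compensate for the gradient factors $1/h$ and the number of nonzero entries after converting an $L^2(\Omega^{(i)})$ bound on $\exp(a_{M_{KL}})-\exp(a^{(i)}_{M_{KL}})$ into a Frobenius-norm bound on the stiffness matrix.
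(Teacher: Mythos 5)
Your overall strategy coincides with the paper's: bound the spectral norm by the Frobenius norm, peel off the $O(H^2/h^4)$ factor coming from the basis gradients, split the remaining $L^2(\Omega^{(i)}\times\Theta)$ error into a KL--truncation part and a PC--truncation part, and handle the PC tail via the explicit Hermite coefficients of the lognormal together with item~2 of Assumption \ref{assump:PC} --- your formula for $a_{\bm\alpha}(\x)$ and the reduction to the scalar tail $\sum_{\bm\alpha\notin\mathscr{S}^{(i)}_d}c_0^2(2\ln c_0)^{|\bm\alpha|}/{\bm\alpha}!$ is exactly \EQ{errorterm}. The differences, and the gaps, are all in the KL half.

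First gap: you treat $A^{(i)}_{L,PC_d}$ as the PC truncation of the matrix assembled with $a^{(i)}_{M_{KL}}$ and cite the linear projection \EQ{lxi} for the local coefficients, but the coefficients $\hat{\xi}^{(i)}_m$ that actually enter the construction are defined by the nonlinear least-squares problem \EQ{localxi}, so $a^{(i)}_{M_{KL}}$ is \emph{not} in general the $L^2(\Omega^{(i)})$ projection of $a_{M_{KL}}$ onto $\mathrm{span}\{a^{(i)}_m\}$. The paper's proof has to deal with this: the second inequality in \EQ{estimatea} replaces $e^{a^{(i)}_{M_{KL}}}$ by $e^{\hat{a}^{(i)}_{M_{KL}}}$ (built from the exact projections $\xi^{(i)}_m$) by invoking the minimization property of \EQ{localxi}, and only then do the Gaussian tail computations apply. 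Your argument silently identifies the two expansions.

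Second gap: ``use $e^x-e^y=e^\zeta(x-y)$ together with Cauchy--Schwarz'' plus ``$E\,e^{2a}\le c_0^2$ controls the Lipschitz constant'' does not close as stated, because $e^{2\zeta}$ and $(x-y)^2$ are dependent; after Cauchy--Schwarz in $\theta$ you need $E(x-y)^4$, not the second moment you quote. This is recoverable since the difference is Gaussian ($E Z^4=3(EZ^2)^2$), but you never say so, and the exponent bookkeeping you explicitly defer is exactly where the paper has to work: Assumption \ref{assump:PC} only bounds the \emph{integrated} tail $\sum_{m>M^{(i)}_{KL}}\lambda^{(i)}_m\|a^{(i)}_m\|^2_2$, whereas the quantities inside the exponentials in \EQ{II12} involve the pointwise sum $\sum_{m>M^{(i)}_{KL}}\lambda^{(i)}_m a^{(i)^2}_m(\x)$, and the paper bridges the two with a Chebyshev splitting of $\Omega^{(i)}$ into the set $A$ and its complement in \EQ{II13}. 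Your mean-value-theorem route, carried out with the Gaussian fourth moment, would in fact avoid that splitting and is arguably cleaner than the paper's factorization $(E e^{4\hat{a}^{(i)}_{M_{KL}}})^{1/2}(E(e^{\Delta a}-1)^4)^{1/2}$, but as written the decisive step is asserted rather than proved. (A minor point: the entry count of a subdomain-local matrix is governed by $H/h$, not $d_\Omega/h$; the paper takes $R_1=R_2=CH/h$.)
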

\begin{proof}
Let $R_1\times R_2$ be the dimension of   $A^{(i)}_{L}$.
$R_1=C\frac{H}{h}$ and $R_2=C\frac{H}{h}$.  Since
$\|{A^{(i)}_{L}}-{A^{{(i)}}_{L,PC_d}}\|^2_2\le
\|{A^{(i)}_{L}}-{A^{{(i)}}_{L,PC_d}}\|^2_F$,  we only need to estimate
$E\|{A^{(i)}_{L}}-{A^{{(i)}}_{L,PC_d}}\|^2_F$.

We denote
\begin{equation}\label{equation:defa}
\begin{aligned}
a_{M_{KL}}(\x,\theta)&=\sum\limits_{m=1}^{{M_{KL}}}\sqrt{\lambda_m}{a}_m(\x)\xi_m(\theta),\\
  a^{(i)}(\x,\theta)&=\sum\limits_{m=1}^{\infty}\sqrt{\lambda_m}{a}_m(\x)\xi_m(\theta)=\sum\limits_{m=1}^{\infty}\sqrt{\lambda^{(i)}_m}{a}^{(i)}_m(\x)\xi^{(i)}_m(\theta),\\
a_{M_{KL}}^{(i)}(\x,\theta)&=\sum\limits_{m=1}^{{M^{(i)}_{KL}}}\sqrt{\lambda^{(i)}_m}{a}^{(i)}_m(\x)\hat{\xi}^{(i)}_m(\theta),\\
\hat{a}_{M_{KL}}^{(i)}(\x,\theta)&=\sum\limits_{m=1}^{{M^{(i)}_{KL}}}\sqrt{\lambda^{(i)}_m}{a}^{(i)}_m(\x)\xi^{(i)}_m(\theta).
\end{aligned}
\end{equation}
We have
\begin{align*}
&E(\|(A^{(i)}_{L}-A^{(i)}_{L,PC_d})\|^2_F)\\
&=E\left(\sum\limits_{l=1}^{R_1}\sum\limits_{m=1}^{R_2}\left(\int_{\Omega^{(i)}}\left(e^{{a}_{M_{KL}}(\x,\theta)}-\sum\limits_{\bm\alpha\in {\S}^{(i)}_d}\langle e^{a^{(i)}_{M_{KL}}(\x,\theta)}\psi_{\bm\alpha}\rangle\psi_{\bm\alpha}\right)
\nabla\phi_l\nabla\phi_md\x\right)^2\right)\\
&\leq\sum\limits_{l=1}^{R_1}\sum\limits_{m=1}^{R_2}
E\int_{\Omega^{(i)}}\left(e^{a_{M_{KL}}(\x,\theta)}-\sum\limits_{\bm\alpha\in {\S}^{(i)}_d}\langle e^{a^{(i)}_{M_{KL}}(\x,\theta)}\psi_{\bm\alpha}\rangle\psi_{\bm\alpha}\right)^2d\x
\int_{\Omega^{(i)}}(\nabla\phi^{(i)}_l\nabla\phi^{(i)}_m)^2d\x\\
&\leq\frac{CH^2}{h^4}
                                                                   E\int_{\Omega^{(i)}}\left(e^{a_{M_{KL}}(\x,\theta)} -\sum\limits_{\bm\alpha\in {\S}^{(i)}_d}\langle e^{a^{(i)}_{M_{KL}}(\x,\theta)}\psi_{\bm\alpha}\rangle\psi_{\bm\alpha}\right)^2d\x,
\end{align*}
where the first inequality follows from the Cauchy Schwarz inequality and
the second inequality follows from the fact 
that $\phi_l$, $\phi_m$ are basis functions for the spacial  variable 
with $|\nabla \phi_l |\leq C\frac 1h$,  $R_1=C\frac{H}{h}$, and $R_2=C\frac{H}{h}$.
\begin{equation}\label{equation:estimatea}
\begin{aligned}
  &E\int_{\Omega^{(i)}}\left(e^{{a_{M_{KL}}(\x,\theta)}}-\sum\limits_{\bm\alpha\in {\S}^{(i)}_d}\langle e^{a^{(i)}_{M_{KL}}(\x,\theta)}\psi_{\bm\alpha}\rangle\psi_{\bm\alpha}\right)^2d\x\\
\le&C E\int_{\Omega^{(i)}}\left(e^{a_{M_{KL}}(\x,\theta)}-e^{a^{(i)}_{M_{KL}}(\x,\theta)}\right)^2d\x + CE\int_{\Omega^{(i)}}\left(e^{a^{(i)}_{M_{KL}}(\x,\theta)}-\sum\limits_{\bm\alpha\in {\S}^{(i)}_d}\langle e^{a^{(i)}_{M_{KL}}(\x,\theta)}\psi_{\bm\alpha}\rangle\psi_{\bm\alpha}\right)^2d\x\\
\le&C E\int_{\Omega^{(i)}}\left(e^{a_{M_{KL}}(\x,\theta)}-
                                                                                                                                                                                         e^{\hat{a}^{(i)}_{M_{KL}}(\x,\theta)}\right)^2d\x + CE\int_{\Omega^{(i)}}\left(e^{a^{(i)}_{M_{KL}}(\x,\theta)}-\sum\limits_{\bm\alpha\in {\S}^{(i)}_d}\langle e^{a^{(i)}_{M_{KL}}(\x,\theta)}\psi_{\bm\alpha}\rangle\psi_{\bm\alpha}\right)^2d\x\\
\le &CE\int_{\Omega^{(i)}}\left(e^{a_{M_{KL}}(\x,\theta)}-e^{a(\x,\theta)}\right)^2d\x + CE\int_{\Omega^{(i)}}\left(e^{a^{(i)}(\x,\theta)} -e^{\hat{a}^{(i)}_{M_{KL}}(\x,\theta)}\right)^2d\x\\
&+CE\left(\int_{\Omega^{(i)}}\left(\sum\limits_{\bm\alpha\in {\S}^{(i)}}\langle e^{a^{(i)}_{M_{KL}}(\x,\theta)}\psi_{\bm\alpha}\rangle\psi_{\bm\alpha}-\sum\limits_{\bm\alpha\in {\S}^{(i)}_d}\langle e^{a^{(i)}_{M_{KL}}(\x,\theta)}\psi_{\bm\alpha}\rangle\psi_{\bm\alpha}\right)^2\right)\\
&:=I+II+III,
\end{aligned}
\end{equation}
where the second inequality follows the definition of $a_{M_{KL}}^{(i)}$
in \EQ{defa} and   \EQ{localxi}. The last inequality uses $a(\x,\theta)=a^{(i)}(\x,\theta),$ for $x\in\Omega^{(i)}$.
{ Therefore, $\|(A^{(i)}_{L}-A^{(i)}_{L,PC_d})\|^2_F $ can be
  bounded by the sum of $I$, $II$, and $III$. $I$ is due to the global KL truncation; $II$ is due to the subdomain
local KL truncation; and $III$ is due to the PC approximation. 
{{We will estimate $II$ and $III$, and $I$ can be estimated using a similar method as $II$.}}}


For the  estimate of $II$, we have
 \begin{equation}\label{equation:II1}
\begin{aligned}
&\int_{\Omega^{(i)}}E(e^{a^{(i)}(\x,\theta)}-e^{\hat{a}^{(i)}_{M_{KL}}(\x,\theta)})^2d\x
\leq C\int_{\Omega^{(i)}}(E({{e^{{4}{\hat{a}}^{(i)}_{M_{KL}}(\x,\theta)}}}))^{\frac12}(E(e^{a^{(i)}(\x,\theta)-{\hat{a}}^{(i)}_{M_{KL}}(\x,\theta)}-1)^4)^{\frac12} d\x\\
&\leq C \left(\int_{\Omega^{(i)}}(E({{e^{{4}{\hat{a}}^{(i)}_{M_{KL}}(\x,\theta)}}}))d\x\right)^{\frac12}\left(\int_{\Omega^{(i)}}(E(e^{a^{(i)}(\x,\theta)-{\hat{a}}^{(i)}_{M_{KL}}(\x,\theta)}-1)^4)d\x\right)^{\frac12}.
\end{aligned}
\end{equation}

\begin{eqnarray}\label{equation:II11}
 &&
    \int_{\Omega^{(i)}}(E({{e^{{4}{\hat{a}}^{(i)}_{M_{KL}}(\x,\theta)}}}))d\x\\
  &=&\int_{\Omega^{(i)}}E\left(\exp({4}\sum\limits_{m=1}^{M^{(i)}_{KL}}\sqrt{\lambda^{(i)}_m}a^{(i)}_m(\x)\xi^{(i)}_m)\right)d\x
      =\int_{\Omega^{(i)}}\exp({8}\sum\limits_{m=1}^{M^{(i)}_{KL}}{\lambda^{(i)}_m}a^{(i)^2}_m(\x)))d\x\le C,\nonumber
\end{eqnarray}
where we use \EQ{c0} in the last step.


{We denote the space measure on $\Omega$ as $\mu$ and let}
\begin{equation}\label{equation:da}
  \begin{aligned}
  \Delta
  &a=a^{(i)}(\x,\theta)-{\hat{a}}^{(i)}_{M_{KL}}(\x,\theta)=\sum\limits_{m=M^{(i)}_{KL}+1}^{\infty}\sqrt{\lambda^{(i)}_m}a^{(i)}_m(\x)\xi^{(i)}_m,\\
  &E(\exp(\Delta a))=\exp(\frac12\sum\limits_{m=M^{(i)}_{KL}+1}^{\infty}\lambda^{(i)}_ma^{(i)^2}_m(\x)).
\end{aligned}
\end{equation}

\begin{equation}\label{equation:II12}
  \begin{aligned}
 & \int_{\Omega^{(i)}}(E(e^{a^{(i)}(\x,\theta)-{\hat{a}}^{(i)}_{M_{KL}}(\x,\theta)}-1)^4)d\x= 
\int_{\Omega^{(i)}}E\left(\exp\left(\Delta a\right)-1\right)^4d\x\\
&=\int_{\Omega^{(i)}}E\left(1-4\exp(\Delta a)+6\exp(2 \Delta a)
-4\exp(3 \Delta a)+\exp(4 \Delta a)\right)d\x\\
&=\int_{\Omega^{(i)}}\left(1-4\exp(\frac12\sum\limits_{m=M^{(i)}_{KL}+1}^{\infty}\lambda^{(i)}_ma^{(i)^2}_m(\x))+6\exp(2\sum\limits_{m=M^{(i)}_{KL}+1}^{\infty}{\lambda^{(i)}_m}a^{(i)^2}_m(\x))\right.\\
&\left.-4\exp(\frac92\sum\limits_{m=M^{(i)}_{KL}+1}^{\infty}\lambda^{(i)}_ma^{(i)^2}_m(\x))+\exp(8\sum\limits_{m=M^{(i)}_{KL}+1}^{\infty}{\lambda^{(i)}_m}a^{(i)^2}_m(\x))\right)d\x\\
&\leq \int_{\Omega^{(i)}} \left(6\exp(2\sum\limits_{m=M^{(i)}_{KL}+1}^{\infty}{\lambda^{(i)}_m}a^{(i)^2}_m(\x))+\exp(8\sum\limits_{m=M^{(i)}_{KL}+1}^{\infty}{\lambda^{(i)}_m}a^{(i)^2}_m(\x))\right)d\x-7H^2,
\end{aligned}
\end{equation}
where the last inequality follows from the fact that
{{$\sum\limits_{m=M^{(i)}_{KL}+1}^{\infty}\lambda^{(i)}_ma^{{(i)}^2}_m(\x)\geq0$}}
and $\mu({\Omega^{(i)}})=H^2$.

By \EQ{L2sum} and Assumption \ref{assump:PC},  we have $$
\int_{\Omega^{(i)}}\sum\limits_{m=M^{(i)}_{KL}+1}^{\infty}{\lambda^{(i)}_m}{a^{{(i)}^2}_m(\x)}d\x=\sum\limits_{m=M^{(i)}_{KL}+1}^{\infty}{\lambda^{(i)}_m}\|{a^{{(i)}^2}_m(\x)}\|^2_2\le \epsilon^2 \frac{h^{8}}{H^6}
.$$

 {Let, $A=\bigg\{\x\in
 \Omega^{(i)}:\sum\limits_{m=M^{(i)}_{KL}+1}^{\infty}{\lambda^{(i)}_m}a^{{(i)}^2}_m>
 \epsilon \frac{h^4}{H^4}\bigg\}$,
we  have $\mu(A) \leq{\epsilon \frac{h^4}{H^2}}$.}

Therefore,
\begin{equation}\label{equation:II13}
\begin{aligned}
&\int_{\Omega^{(i)}}\left(6\exp(2\sum\limits_{m=M^{(i)}_{KL}+1}^{\infty}{\lambda^{(i)}_m}a^{(i)^2}_m(\x))+\exp(8\sum\limits_{m=M^{(i)}_{KL}+1}^{\infty}{\lambda^{(i)}_m}a^{(i)^2}_m(\x))\right)d\x\\
&=\int_{\Omega^{(i)}\cap A}\left(6\exp(2\sum\limits_{m=M^{(i)}_{KL}+1}^{\infty}{\lambda^{(i)}_m}a^{(i)^2}_m(\x))+\exp(8\sum\limits_{m=M^{(i)}_{KL}+1}^{\infty}{\lambda^{(i)}_m}a^{(i)^2}_m(\x))\right)d\x\\
&\quad+\int_{\Omega^{(i)}\cap A^c}\left(6\exp(2\sum\limits_{m=M^{(i)}_{KL}+1}^{\infty}{\lambda^{(i)}_m}a^{(i)^2}_m(\x))+\exp(8\sum\limits_{m=M^{(i)}_{KL}+1}^{\infty}{\lambda^{(i)}_m}a^{(i)^2}_m(\x))\right)d\x\\
&\leq C\mu(A)+(6\exp({2\epsilon\frac{h^4}{H^4}})+\exp({8\epsilon\frac{h^4}{H^4}})\mu(\Omega^{(i)})\\
&\leq C\epsilon\frac{h^4}{H^2}+(6\exp({2\epsilon\frac{h^4}{H^4}})+\exp({8\epsilon\frac{h^4}{H^4}}))H^2,
\end{aligned}
\end{equation}
where we use \EQ{c0} for the last second inequality. 

Combining \EQ{II1},\EQ{II11}, \EQ{II12}, and \EQ{II13}, we
have  
\begin{align*}
II&\leq C\left(C\epsilon\frac{h^4}{H^2}+(6\exp({2\epsilon\frac{h^4}{H^4}})+\exp({8\epsilon\frac{h^4}{H^4}}))H^2-7H^2\right)\leq C\frac{h^4}{H^2}\epsilon.
\end{align*}

For the estimate of III, using the fact that 
$\{\psi_\alpha\}$ forms an orthonormal basis, we have
\begin{equation}\label{equation:III}
\begin{aligned}
  III&=E\left(\int_{\Omega^{(i)}}\sum\limits_{\bm\alpha\in
       \S^{(i)}\backslash \S^{(i)}_d}\langle
       e^{a^{(i)}_{M_{KL}}(\x,\theta)}\psi_{\bm\alpha}\rangle\psi_{\bm\alpha}d\x\right)^2
  =\int_{\Omega^{(i)}}\sum\limits_{\bm\alpha\in \S^{(i)}\backslash \S^{(i)}_d}\langle e^{a^{(i)}_{M_{KL}}(\x,\theta)}\psi_{\bm\alpha}\rangle^2d\x.
\end{aligned}
\end{equation}
By \cite[Page 926]{Ullmann2010} or \cite[Chapter I, Theorem
3.1]{malliavin2015stochastic}, we have 
 \begin{equation}\label{equation:errorterm}
 \begin{aligned}
 &\sum\limits_{\bm\alpha\in \S^{(i)}\backslash \S^{(i)}_d}\langle\exp({a^{(i)}_{M_{KL}}(\x,\theta)})\psi_{\bm\alpha}\rangle^2\\
&{=\sum\limits_{{\bm\alpha}\in {{\S}^{(i)}\backslash\mathscr{S}^{(i)}_d }}
{\langle \exp(a^{(i)}_{M_{KL}}(\x,\theta))\rangle^2}
\frac{1}{{\bm\alpha}!}\left(\prod\limits_{m=1}^{M^{(i)}_{KL}}(\sqrt{\lambda^{(i)}_m}a^{(i)}_m(\x))^{2\alpha^{(i)}_m}\right)}\\
&\leq\sum\limits_{{\bm\alpha}\in {{\S}^{(i)}\backslash\mathscr{S}^{(i)}_d }}{c^2_0}
{\frac{1}{{\bm\alpha}!}\left(\left(\sum\limits_{m=1}^{M_{KL}^{(i)}}{\lambda^{(i)}_m}(a^{(i)}_m(\x))^2\right)^{\sum\limits_{m=1}^{M^{(i)}_{KL}}\alpha^{(i)}_m}\right)}\\
&{\leq \sum\limits_{{\bm\alpha}\in {{\S}^{(i)}\backslash\mathscr{S}^{(i)}_d }}c^2_0\frac {(2\ln(c_0))^{{|\bm\alpha|}} }{{\bm\alpha}!}
\le \epsilon\frac{1}{H^2},}
\end{aligned}
\end{equation}
where we use \EQ{c0} for the first inequality and Assumption
\ref{assump:PC} in the last step.

Plugging the above estimate  into \EQ{III}, we
obtain $III\le C\epsilon$. 

 \end{proof}

 \begin{lemma}\label{pcinverseestimate}
   There exist positive constants $N$, $N^{(i)}$, and $M$ such that if
   $M_{KL}>N$, $M_{KL}^{(i)}>N^{(i)}$, and $d>M$, we have
\begin{align*}
E\|A^{{(i)}^{-1}}_{L}-A^{{(i)}^{-1}}_{L,PC_d}\|^2_2\leq C\epsilon.
\end{align*}
\begin{proof}
 Let $e_j$, $v_j$ and $w_j$ be the $j$th column of the identity
 matrix, $A^{{(i)}^{-1}}_{L}$ and
 $A^{{(i)}^{-1}}_{L,PC_d}$, respectively. { Let $R_1$ be  the number of
 column of $A^{{(i)}^{-1}}_{L}$ with the size $\frac{H}{h}$. }
 We have
 $${E\|A^{{(i)}^{-1}}_{L}-A^{{(i)}^{-1}}_{L,PC_d}\|^2_2\le
 E\|A^{{(i)}^{-1}}_{L}-A^{{(i)}^{-1}}_{L,PC_d}\|^2_F =  E(\sum\limits_{j=1}^{R_1}\|v_j-w_j\|^2_{l^2}).}$$
 By the definition, we have
 $A^{{(i)}}_{L}v_j=e_j$ and $v_j$ is the finite element solution of
 \EQ{pde1} with  $\kappa(\x,\theta)=e^{a_{M_{KL}}}$ on $\Omega^{(i)}$ and
 corresponding boundary conditions on $\partial\Omega^{(i)}$. The right
 hand side is corresponding to $e_j$. Similar to the proof of Lemma
 \ref{pcestimate}, we introduce two additional finite element solutions of \EQ{pde1}
 on $\Omega^{(i)}$ with the same boundary condition and the right hand
 side function but different coefficients
 $\kappa(\x,\theta)$. $\hat{v}_j$ and $\hat{w}_j$ are the solutions with
 $\kappa(\x,\theta)=e^{a^{(i)}(\x,\theta)}$  and
 $\kappa(\x,\theta)=e^{a^{(i)}_{M_{KL}}}$,
 respectively. {Using the estimate in Lemma
   \ref{pcestimate} and following the
 convergence  analysis for the solution with truncated KL expansion of
 the coefficients and the finite element error analysis in
 \cite{CPDE2012,CST2013}, \cite[Theorem 8]{GKNSSS2015},  there exist two positive
 constants $N$ and $N^{(i)}$ such that if $M_{KL}>N$ and
 $M^{(i)}_{KL}>N^{(i)}$, we have $E\|v_j-\hat{v}_j\|_{l^2}^2\le
 \frac{\epsilon}{3}{\frac hH}$ and $E\|\hat{w}_j-\hat{v}_j\|_{l^2}^2\le
 \frac{\epsilon}{3}{\frac hH}$. } By \EQ{tpcY}, we know $w_j$ is the stochastic
 finite element solution of \EQ{pde1} with
 $\kappa(\x,\theta)=e^{a^{(i)}_{ML}}$ and the same boundary conditions
 and the right hand side functions as for $\hat{w}_j$.  Using the
 convergence of the stochastic finite element solutions
 \cite{BNTcollocation2007,GS2009,Gittelson2010,MS2013,HS2014,DS2017,BCDM2017}, with fixed $M^{(i)}_{KL}$, there exists a positive $M>0$ such that $E\|w_j-\hat{w}_j\|_{l^2}^2\le
 \frac{\epsilon}{3}{\frac hH}$ if
 $d>M$. 
 Combing all these estimates, we have $E\|v_j-w_j\|_{l^2}^2\le \epsilon\frac hH$. 
 
 Therefore, we have
 \begin{align*}
 E(\sum\limits_{j=1}^{R_1}\|v_j-w_j\|^2_{{l_2}})\leq R_1\epsilon\frac hH\leq C\epsilon,
 \end{align*}
 which implies the result.

\end{proof}
\end{lemma}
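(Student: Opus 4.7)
The plan is to reduce the matrix inverse estimate to a column-wise finite element error estimate and then invoke three separate convergence results, one per layer of approximation. Start by using $\|\cdot\|_2 \le \|\cdot\|_F$ so that
\begin{equation*}
E\|A_L^{(i)^{-1}} - A_{L,PC_d}^{(i)^{-1}}\|_2^2 \;\le\; \sum_{j=1}^{R_1} E\|v_j - w_j\|_{\ell^2}^2,
\end{equation*}
where $v_j,w_j$ are the $j$-th columns of $A_L^{(i)^{-1}}$ and $A_{L,PC_d}^{(i)^{-1}}$ and $R_1 \le C H/h$. By construction, $v_j$ is the deterministic finite element solution on $\Omega^{(i)}$ with coefficient $\kappa = \exp(a_{M_{KL}})$ and right-hand side $e_j$, while $w_j$ (by the derivation leading to \EQ{tpcY}) is the truncated PC stochastic finite element solution on the same problem but with the subdomain-local KL coefficient $\kappa = \exp(a^{(i)}_{M_{KL}})$ and PC expansion degree $\le d$.

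Next I would insert two auxiliary FEM solutions of the same Dirichlet problem on $\Omega^{(i)}$, with identical right-hand side but with the coefficient replaced by the exact field $\exp(a^{(i)})$ (call this $\hat v_j$) and by the local KL field $\exp(a^{(i)}_{M_{KL}})$ (call this $\hat w_j$). The triangle inequality gives
\begin{equation*}
E\|v_j - w_j\|_{\ell^2}^2 \;\le\; 3\bigl( E\|v_j - \hat v_j\|_{\ell^2}^2 + E\|\hat v_j - \hat w_j\|_{\ell^2}^2 + E\|\hat w_j - w_j\|_{\ell^2}^2 \bigr),
\end{equation*}
which cleanly separates the three sources of approximation: global KL truncation, subdomain-local KL truncation, and PC truncation in the stochastic Galerkin solution.

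For the first two differences, the coefficient perturbations in $L^2(\Omega^{(i)} \times \Theta)$ are already controlled by the ingredients used in Lemma \ref{pcestimate} (terms $I$ and $II$) and Assumption \ref{assump:PC}. Combining those with the standard parametric/FEM convergence framework for log-normal coefficients (e.g., CPDE2012, CST2013, GKNSSS2015) lets me choose $N$ and $N^{(i)}$ large enough that each of $E\|v_j - \hat v_j\|_{\ell^2}^2$ and $E\|\hat w_j - \hat v_j\|_{\ell^2}^2$ is at most $(\epsilon/3)(h/H)$ whenever $M_{KL}>N$ and $M_{KL}^{(i)} > N^{(i)}$. The third term, $E\|\hat w_j - w_j\|_{\ell^2}^2$, is the PC truncation error for a stochastic Galerkin finite element solution at fixed stochastic dimension $M_{KL}^{(i)}$; by the spectral convergence results for log-normal stochastic Galerkin methods (BNTcollocation2007, GS2009, Gittelson2010, HS2014, BCDM2017, etc.), there exists $M$ depending on $M_{KL}^{(i)}$ such that $d>M$ forces this term below $(\epsilon/3)(h/H)$ as well.

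Finally, summing over the $R_1 \le C H/h$ columns yields $\sum_{j=1}^{R_1} E\|v_j - w_j\|_{\ell^2}^2 \le R_1 \cdot \epsilon (h/H) \le C\epsilon$, which is the claim. I expect the main obstacle to be the $h/H$ scaling: the right-hand sides $e_j$ are unit coordinate vectors and their equivalent continuum load functionals scale unfavorably with $h$, so care is needed to verify that the cited FEM/SG-FEM a priori bounds, when applied to these discrete RHS, actually deliver the $\epsilon (h/H)$ rate per column rather than something worse. Once that scaling is pinned down, the rest of the argument is assembled from the three convergence results and Assumption \ref{assump:PC}.
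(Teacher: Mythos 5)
Your proposal is correct and follows essentially the same route as the paper's own proof: the same reduction via the Frobenius norm to column-wise estimates, the same two auxiliary finite element solutions $\hat v_j$ and $\hat w_j$ isolating the global KL, local KL, and PC truncation errors, the same appeals to the cited convergence results for each piece, and the same final summation over the $R_1 = O(H/h)$ columns. Your closing caveat about verifying the $\epsilon\, h/H$ per-column scaling for the discrete right-hand sides $e_j$ is a fair observation, but the paper asserts these bounds in exactly the same way, so there is no substantive difference.
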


\begin{lemma}\label{lemma:SF}
For $u_\Gamma\in\widetilde{W}_\Gamma$ and $\theta\in\Theta$, we have
\begin{align*}
|u^T_\Gamma\widetilde{S}_{\Gamma} u_\Gamma-u^T_\Gamma\widetilde{S}_{\Gamma,F} u_\Gamma|\leq \gamma_{FS}\|u_{\Gamma}\|^2_{\widetilde{S}_{\Gamma}},
\end{align*}
where
\begin{equation}\label{equation:gammaFS}
  \begin{aligned}
  \gamma_{FS}&=\left(\|\Atilde_{\Gamma\Gamma}-\Atilde_{\Gamma\Gamma,F}\|_2
+\|\Atilde_{\Gamma I}-\Atilde_{\Gamma I,F}\|_2\|A^{-1}_{II}{\Atilde^T_{\Gamma I}}\|_2\right.\\
&+\left.\|\Atilde_{\Gamma
    I,F}\|_2\left(\|A^{-1}_{II}-A^{-1}_{II,F}\|_2\|{\Atilde^T_{\Gamma
        I}}\|_2+\|A^{-1}_{II,F}\|_2\|{\Atilde^T_{\Gamma
        I}}-{\Atilde^T_{\Gamma I,F}}\|_2\right)\right)
\|\widetilde{S}^{-1}_\Gamma\|_2.
\end{aligned}
\end{equation}
\end{lemma}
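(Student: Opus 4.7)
The plan is to reduce the quadratic-form bound to an operator-norm bound on the difference $\Stilde_{\Gamma}-\Stilde_{\Gamma,F}$, and then convert the Euclidean norm of $u_\Gamma$ to its $\Stilde_\Gamma$-norm using the fact that $\Stilde_\Gamma$ is symmetric positive definite. The key observation is that for any symmetric matrix $M$ and any vector $v$, one has
\begin{equation*}
|v^T M v|\le \|M\|_2\,\|v\|_{l^2}^2,\qquad \|v\|_{l^2}^2 \le \|\Stilde_\Gamma^{-1}\|_2\,\|v\|_{\Stilde_\Gamma}^2,
\end{equation*}
so it suffices to prove $\|\Stilde_\Gamma-\Stilde_{\Gamma,F}\|_2\le \gamma_{FS}/\|\Stilde_\Gamma^{-1}\|_2$, i.e.\ everything inside the large parentheses of \EQ{gammaFS}.

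Using the definitions \EQ{schurT} and \EQ{StF}, I would write
\begin{equation*}
\Stilde_\Gamma-\Stilde_{\Gamma,F}=(\Atilde_{\Gamma\Gamma}-\Atilde_{\Gamma\Gamma,F})-\bigl(\Atilde_{\Gamma I}A_{II}^{-1}\Atilde_{\Gamma I}^T - \Atilde_{\Gamma I,F}A_{II,F}^{-1}\Atilde_{\Gamma I,F}^T\bigr),
\end{equation*}
and then telescope the second term by inserting and subtracting the two intermediate products $\Atilde_{\Gamma I,F}A_{II}^{-1}\Atilde_{\Gamma I}^T$ and $\Atilde_{\Gamma I,F}A_{II,F}^{-1}\Atilde_{\Gamma I}^T$, giving three summands each of which contains exactly one of the factor differences $\Atilde_{\Gamma I}-\Atilde_{\Gamma I,F}$, $A_{II}^{-1}-A_{II,F}^{-1}$, or $\Atilde_{\Gamma I}^T-\Atilde_{\Gamma I,F}^T$ multiplying the remaining matrices in the exact/approximate mixed form. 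Applying the triangle inequality and the submultiplicativity of the spectral norm produces exactly the expression in \EQ{gammaFS} (recognising also that $\|A_{II}^{-1}\Atilde_{\Gamma I}^T\|_2$ appears naturally from the first of the three telescoped terms).

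Finally, multiplying the resulting bound on $\|\Stilde_\Gamma-\Stilde_{\Gamma,F}\|_2$ by $\|\Stilde_\Gamma^{-1}\|_2$ and combining with the elementary inequalities above yields the claimed estimate. There is no real obstacle here; the only bookkeeping care is (i) to keep the telescoping order consistent with which matrices appear in each norm of \EQ{gammaFS}, and (ii) to use that $\Stilde_\Gamma$ is symmetric positive definite (which follows from the BDDC construction in the previous section together with positive definiteness of the exact $A_{II}$ and the assembled interface system) so that $\|v\|_{l^2}^2\le \|\Stilde_\Gamma^{-1}\|_2\,\|v\|_{\Stilde_\Gamma}^2$ is valid on all of $\widetilde{W}_\Gamma$. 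The symmetry of the difference matrix (inherited from the Schur complement structure) justifies passing from the bilinear form $u_\Gamma^T(\Stilde_\Gamma-\Stilde_{\Gamma,F})u_\Gamma$ to the spectral norm without loss.
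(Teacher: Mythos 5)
Your proposal is correct and follows essentially the same route as the paper: the same three-term telescoping of $\Atilde_{\Gamma I}A_{II}^{-1}\Atilde_{\Gamma I}^T-\Atilde_{\Gamma I,F}A_{II,F}^{-1}\Atilde_{\Gamma I,F}^T$ (keeping exactly one factor difference per summand), followed by submultiplicativity of the spectral norm and the conversion $u_\Gamma^Tu_\Gamma=u_\Gamma^T\Stilde_\Gamma^{1/2}\Stilde_\Gamma^{-1}\Stilde_\Gamma^{1/2}u_\Gamma\le\|\Stilde_\Gamma^{-1}\|_2\|u_\Gamma\|^2_{\Stilde_\Gamma}$. The only cosmetic difference is that you package the telescoped terms into a single operator-norm bound on $\Stilde_\Gamma-\Stilde_{\Gamma,F}$ before invoking $|v^TMv|\le\|M\|_2\|v\|^2_{l^2}$, whereas the paper bounds each term of the quadratic form directly; the estimates are identical.
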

\begin{proof}
\begin{equation}\label{equation:SF}
\begin{aligned}
&|u^T_\Gamma\widetilde{S}_{\Gamma} u_\Gamma-u^T_\Gamma\widetilde{S}_{\Gamma,F} u_\Gamma |\\
&=|u^T(\Atilde_{\Gamma\Gamma}-\Atilde_{\Gamma\Gamma,F})u_\Gamma
-u^T(\Atilde_{\Gamma I}A^{-1}_{II}{\Atilde^T_{\Gamma I}})u_\Gamma+u^T(\Atilde_{\Gamma I,F}A^{-1}_{II,F}{\Atilde^T_{\Gamma I,F}})u_\Gamma|\\
&\leq \left(\|\Atilde_{\Gamma\Gamma}-\Atilde_{\Gamma\Gamma,F}\|_2
+\|\Atilde_{\Gamma I}-\Atilde_{\Gamma I,F}\|_2\|A^{-1}_{II}{\Atilde^T_{\Gamma I}}\|_2\right.\\
&\qquad+\left.\|\Atilde_{\Gamma
    I,F}\|_2\|A^{-1}_{II}{\Atilde^T_{\Gamma
      I}}-A^{-1}_{II,F}{\Atilde^T_{\Gamma
      I,F}}\|_2\right)u^T_{\Gamma}u_{\Gamma}.
\end{aligned}
\end{equation}
Since
\begin{align*}
&\|A^{-1}_{II}{\Atilde^T_{\Gamma I}}-A^{-1}_{II,F}{\Atilde^T_{\Gamma I,F}}\|_2
\leq \|A^{-1}_{II}-A^{-1}_{II,F}\|_2\|{\Atilde^T_{\Gamma I}}\|_2+\|A^{-1}_{II,F}\|_2\|{\Atilde^T_{\Gamma I}}-{\Atilde^T_{\Gamma I,F}}\|_2
\end{align*}
and
\begin{equation}\label{equation:uH1}
u^T_\Gamma u_\Gamma=u^T_{\Gamma}
\widetilde{S}^{\frac12}_\Gamma
\widetilde{S}^{-1}_\Gamma
\widetilde{S}^{\frac12}_\Gamma
u_{\Gamma}
\leq \|\widetilde{S}^{-1}_\Gamma\|_2\|u_{\Gamma}\|_{\widetilde{S}_\Gamma},
\end{equation}
plugging in the \EQ{SF}, we have
\begin{align*}
&|u^T_\Gamma\widetilde{S}_{\Gamma} u_\Gamma-u^T_\Gamma\widetilde{S}_{\Gamma,F} u_\Gamma |
\leq C\gamma_{FS}\|u_\Gamma\|^2_{\widetilde{S}_{\Gamma}}.
\end{align*}
\end{proof}

\begin{lemma}\label{gammaFSinequality}
{We have the following estimates
\begin{equation}\label{equation:GammaFS}
\lim\limits_{M_{KL},M^{(i)}_{KL}\rightarrow\infty}\lim\limits_{d\rightarrow\infty} P(\{\theta:\gamma_{FS}< 1\})=1,
\end{equation}
and
\begin{equation}\label{equation:SgammaF}
\lim\limits_{M_{KL},M^{(i)}_{KL}\rightarrow\infty}\lim\limits_{d\rightarrow\infty} P(\{\theta:{u^T_{\Gamma}\widetilde{S}_{\Gamma} u_\Gamma\leq \frac1{1-\gamma_{FS}}u^T_{\Gamma}\widetilde{S}_{\Gamma,F} u_\Gamma}\})=1.
\end{equation}}
\end{lemma}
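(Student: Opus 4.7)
My plan is to combine the $L^2$ matrix-difference bounds of Lemmas \ref{pcestimate} and \ref{pcinverseestimate} with Markov's inequality, after first isolating and truncating those $\theta$-dependent prefactors in \EQ{gammaFS} that do not depend on the PC parameters $(M_{KL},M^{(i)}_{KL},d)$. The second claim \EQ{SgammaF} will then follow directly from the first by rearranging the estimate in Lemma \ref{lemma:SF}.

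First, I would split $\gamma_{FS}$ into two types of quantities. The \emph{small} factors are the matrix differences $\|\Atilde_{\Gamma\Gamma}-\Atilde_{\Gamma\Gamma,F}\|_2$, $\|\Atilde_{\Gamma I}-\Atilde_{\Gamma I,F}\|_2$, $\|A_{II}^{-1}-A_{II,F}^{-1}\|_2$, and $\|\Atilde_{\Gamma I}^T-\Atilde_{\Gamma I,F}^T\|_2$; each satisfies $E\|\,\cdot\,\|_2^2\le C\epsilon$ by Lemmas \ref{pcestimate} and \ref{pcinverseestimate}, where $\epsilon$ is driven to zero by Assumption \ref{assump:PC}. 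The \emph{reference} factors $\|A_{II}^{-1}\Atilde_{\Gamma I}^T\|_2$, $\|\Atilde_{\Gamma I}^T\|_2$, and $\|\widetilde{S}_\Gamma^{-1}\|_2$ depend only on the exact coefficient field $\kappa(\x,\theta)=\exp(a(\x,\theta))$ on the fixed mesh and not on $(M_{KL},M^{(i)}_{KL},d)$. The two remaining factors $\|\Atilde_{\Gamma I,F}\|_2$ and $\|A_{II,F}^{-1}\|_2$ are handled by the triangle inequality, e.g.\ $\|\Atilde_{\Gamma I,F}\|_2\le \|\Atilde_{\Gamma I}\|_2+\|\Atilde_{\Gamma I}-\Atilde_{\Gamma I,F}\|_2$, which shifts the $(M_{KL},M^{(i)}_{KL},d)$-dependent portion back into the small category.

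Given $\eta>0$, the first step is to choose $K=K(\eta,H,h)$ such that the event $B_K$ on which every reference factor is bounded by $K$ satisfies $P(B_K)\ge 1-\eta/2$; this is possible because each reference factor is a finite-valued random variable on the fixed mesh (so their maximum is tight). On $B_K$, by Markov's inequality,
\begin{equation*}
P\!\left(\max_{L}\|\Atilde_L-\Atilde_{L,F}\|_2 > \tau \right)\;\le\; \frac{C\epsilon}{\tau^2}
\end{equation*}
for any $\tau>0$, where the maximum runs over the four small factors. Setting $\tau$ small enough (depending on $K$) that $\gamma_{FS}$, assembled term by term from \EQ{gammaFS}, is strictly less than $1$ on $B_K\cap\{\max_L\|\Atilde_L-\Atilde_{L,F}\|_2\le \tau\}$, and then choosing $M_{KL},M^{(i)}_{KL},d$ large enough (via Assumption \ref{assump:PC}) to force $C\epsilon/\tau^2\le \eta/2$, gives $P(\gamma_{FS}<1)\ge 1-\eta$. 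Sending $\eta\to 0$ yields \EQ{GammaFS}. For \EQ{SgammaF}, on the event $\{\gamma_{FS}<1\}$ Lemma \ref{lemma:SF} gives $u_\Gamma^T\widetilde{S}_\Gamma u_\Gamma - u_\Gamma^T\widetilde{S}_{\Gamma,F}u_\Gamma \le \gamma_{FS}\,u_\Gamma^T\widetilde{S}_\Gamma u_\Gamma$, which rearranges immediately into the desired $u_\Gamma^T\widetilde{S}_\Gamma u_\Gamma\le \frac{1}{1-\gamma_{FS}}u_\Gamma^T\widetilde{S}_{\Gamma,F}u_\Gamma$.

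The principal obstacle is the probabilistic control of $\|\widetilde{S}_\Gamma^{-1}\|_2$. Because $\kappa=\exp(a)$ is lognormal, $\min_\x\kappa(\x,\theta)$ admits no deterministic positive lower bound, so $\|\widetilde{S}_\Gamma^{-1}\|_2$ is almost surely finite but not uniformly bounded; consequently no deterministic choice of $K$ works and the whole argument must be genuinely probabilistic, with the high-probability truncation set $B_K$ fixed \emph{before} the PC degree is taken large. Conceptually this is the reason the lemma gives convergence in probability rather than an unconditional deterministic bound on $\gamma_{FS}$. Once the truncation is in place, the remainder of the argument is a routine composition of Markov's inequality with the $L^2$-bounds of Lemmas \ref{pcestimate} and \ref{pcinverseestimate}.
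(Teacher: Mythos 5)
Your proposal is correct and follows essentially the same route as the paper's proof: both truncate the unbounded prefactors of $\gamma_{FS}$ to a high-probability set and then apply Markov's inequality to the $L^2$-small matrix differences supplied by Lemmas \ref{pcestimate} and \ref{pcinverseestimate}, with \EQ{SgammaF} obtained by rearranging Lemma \ref{lemma:SF}. The only cosmetic differences are that the paper couples the truncation level to $\epsilon$ explicitly (taking $\delta=\epsilon^2$ with the difference bounds at level $\epsilon^3$) and places the $F$-dependent products directly into its truncation set $S_0$, whereas you shift them to the small category via the triangle inequality; note also that $\widetilde{S}_\Gamma^{-1}$ is built from $a_{M_{KL}}$ and hence does depend on $M_{KL}$, so your tightness step should rest on the uniform boundedness of $E\|\widetilde{S}_\Gamma^{-1}\|_2$ that the paper invokes.
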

\begin{proof}
We first prove that, for any $\epsilon>0$,  there exist $N$,
$N^{(i)}$, and $M$ such that $P(\{\theta:\gamma_{FS}\ge
1\})<C\epsilon$, if $M_{KL}>N$, $M_{KL}^{(i)}>N^{(i)}$, and $d>M$.

We take $N$,
$N^{(i)}$, and $M$ satisfy Lemmas \ref{pcestimate} and
\ref{pcinverseestimate} with the bounds $\epsilon^3$.  If $M_{KL}>N$, $M_{KL}^{(i)}>N^{(i)}$, and
$d>M$, for any subdomain local matrix $A$, $E(\|A\|^2_2)$ is bounded, by \cite{CPDE2012}, and this implies that $E(\|A\|_2)$ is bounded. Let
 $\delta=\epsilon^2$ and
we have 
\begin{equation}\label{equation:deltaA}
P(\{\theta:\|A\|_2> \frac1{\sqrt{\delta}}\})\leq \sqrt{\delta}E(\|A\|_2).
\end{equation}
\begin{equation}\label{equation:expectationgammaFS}
\begin{aligned}
&P(\{\theta:\gamma_{FS}\geq 1\})\\
&\leq P\left(\{\theta:\gamma_{FS}\geq 1\}\cap\{\theta:\|\widetilde{S}^{-1}_\Gamma\|_2>\frac1{\sqrt{\delta}}\}\right)
+P\left(\{\theta:\gamma_{FS}\geq 1\}\cap\{\theta:\|\widetilde{S}^{-1}_\Gamma\|_2\leq \frac1{\sqrt{\delta}}\}\right)\\
&\leq E\left(\|\widetilde{S}^{-1}_\Gamma\|_2\right)\sqrt{\delta}+P\left(\{\theta:\gamma_{FS}\geq 1\}\cap\{\theta:\|\widetilde{S}^{-1}_\Gamma\|_2\leq \frac1{\sqrt{\delta}}\}\right)\\
&\leq C\sqrt{\delta}+P\left(\{\theta:\gamma_{FS}\geq 1\}\cap\{\theta:\|\widetilde{S}^{-1}_\Gamma\|_2\leq \frac1{\sqrt{\delta}}\}\right),
\end{aligned}
\end{equation}
where we use $E\left(\|\widetilde{S}^{-1}_\Gamma\|_2\right)$ is
bounded in the last step. 

Let the set $S_0$ include all $\theta$ such that the factors in
$\gamma_{FS}$ are bounded by $\frac1{\sqrt{\delta}}$, namely
\begin{equation}\label{equation:boundset}
 S_0= \left\{\theta:
   \|\widetilde{S}^{-1}_\Gamma\|_2\leq\frac1{\sqrt{\delta}}, \|A^{-1}_{II}{\Atilde^T_{\Gamma
        I}}\|_2\leq\frac1{\sqrt{\delta}}, \|\Atilde_{\Gamma
      I,F}\|_2\|\Atilde^T_{\Gamma I}\|_2\leq
    \frac1{\sqrt{\delta}}, \|\Atilde_{\Gamma
      I,F}\|_2\|{A^{-1}_{II,F}}\|_2\leq \frac1{\sqrt{\delta}}\right\}.
  \end{equation}

  We repeat the procedure as in \EQ{expectationgammaFS} for other
  components in $\gamma_{FS}$ and obtain
  
\begin{equation}\label{equation:expectationgammaFSall}
\begin{aligned}
&P(\{\theta:\gamma_{FS}\geq 1\})\\
&\leq E(\|\widetilde{S}^{-1}_\Gamma\|_2)\sqrt{\delta}+ E(\|A^{-1}_{II}{\Atilde^T_{\Gamma I}}\|_2){\sqrt{\delta}}
 +E(\|\Atilde_{\Gamma I,F}\|_2\|\Atilde^T_{\Gamma I}\|_2){\sqrt{\delta}}\\
 &+E(\|{A^{-1}_{II,F}}\|_2\|\Atilde_{\Gamma
   I,F}\|_2){\sqrt{\delta}}+P\left(\{\theta:\gamma_{FS}\geq 1\}\cap S_0\right)\\
 &\le C{\sqrt{\delta}}+P\left(\{\theta:\gamma_{FS}\geq 1\}\cap S_0\right).
\end{aligned}
\end{equation}

Denoting  $I_{S_0}$ as the indicator function of $S_0$ and using
\EQ{gammaFS}, we have
\begin{equation}\label{equation:secondP}
\begin{aligned}
&P(\{\theta:\gamma_{FS}\geq 1\}\cap S_0)\\
&\leq E\left(\left(\|\Atilde_{\Gamma\Gamma}-\Atilde_{\Gamma\Gamma,F}\|_2
+\|\Atilde_{\Gamma I}-\Atilde_{\Gamma I,F}\|_2\|A^{-1}_{II}{\Atilde^T_{\Gamma I}}\|_2\right.\right.\\
&+\left.\left.\|\Atilde_{\Gamma I,F}\|_2\|A^{-1}_{II}-A^{-1}_{II,F}\|_2\|{\Atilde^T_{\Gamma I}}\|_2+\|\Atilde_{\Gamma I,F}\|_2\|A^{-1}_{II,F}\|_2\|{\Atilde^T_{\Gamma I}}-{\Atilde^T_{\Gamma I,F}}\|_2\right)
\|\widetilde{S}^{-1}_\Gamma\|_2I_{S_0}\right)\\
&\le C\left(\epsilon^3+\epsilon^3\frac1{\sqrt{\delta}}+\frac1{\sqrt{\delta}}\epsilon^3 +\frac1{\sqrt{\delta}}\epsilon^3\right)\frac1{{\sqrt{\delta}}}\\
&\leq C\epsilon^3\frac{1}{\delta}=C\epsilon,
\end{aligned}
\end{equation}
where we use Lemmas \ref{pcestimate}, 
\ref{pcinverseestimate}, and {\EQ{boundset} for the last second step, and
$\delta=\epsilon^2$ for the last step.}
 Combining \EQ{expectationgammaFSall} and \EQ{secondP}, we have 
 \begin{align*}
 P(\{\theta:\gamma_{FS}>1\})&\leq
                       C{\sqrt{\delta}}+C\epsilon\le C\epsilon.
 \end{align*}
 Moreover, by Lemma \ref{lemma:SF}, we have
 $(1-\gamma_{FS})u^T_{\Gamma}\widetilde{S}_{\Gamma} u_\Gamma\le
 u^T_{\Gamma}\widetilde{S}_{\Gamma,F} u_\Gamma$ and therefore, 
\begin{align*}
&P\{\theta: u^T_{\Gamma}\widetilde{S}_{\Gamma} u_\Gamma> \frac1{1-\gamma_{FS}}u^T_{\Gamma}\widetilde{S}_{\Gamma,F} u_\Gamma\}
= P\{\theta:\gamma_{FS}> 1\}\le C\epsilon. 
\end{align*}
\end{proof}



\begin{lemma}\label{lemma:SP}
For $u_\Gamma\in\widetilde{W}_\Gamma$ and $\theta\in\Theta$,  we have
\begin{align*}
|u^T_\Gamma\widetilde{S}^{-1}_{\Gamma} u_\Gamma-u^T_\Gamma\widetilde{S}^{-1}_{\Gamma,P} u_\Gamma|\leq \gamma_{PS}\|u_{\Gamma}\|^2_{\widetilde{S}^{-1}_{\Gamma}},
\end{align*}
where 
$\gamma_{PS}=C(\gamma_1+\|S^{-1}_{\Pi}\|_2\|\Phi_P\|_2\gamma_2+\|\Phi_P\|^2_2\|S^{-1}_{\Pi}\|_2\|S^{-1}_{\Pi,P}\|_2\gamma_3)\|\widetilde{S}_\Gamma\|_2$,\\
$\gamma_1=C
\sum\limits_{i=1}^N \|{A^{(i)}_{rr}}^{-1}-A_{rr,P}^{{(i)}^{-1}}\|_2$,
$\gamma_2=C\sum\limits_{i=1}^N\|{A^{(i)}_{rr}}^{-1}{A^{(i)}_{cr}}^{T}-A_{rr,P}^{{(i)}^{-1}}{{A_{cr,P}^{{(i)}^T}\|_2}}$,
and $\gamma_3=C\sum\limits_{i=1}^N(\|A^{(i)}_{cc}-A^{(i)}_{cc,P}\|_2+\|A^{(i)}_{cr}A_{rr}^{{(i)}^{-1}}A_{cr}^{{(i)}^{T}}-A^{(i)}_{cr,P}A_{rr,P}^{{(i)}^{-1}}{{A_{cr,P}^{(i)^T}}}\|_2)$.
\end{lemma}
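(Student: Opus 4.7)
\medskip
\noindent\textbf{Proof proposal.} The plan is to estimate $u_\Gamma^T(\widetilde{S}_\Gamma^{-1}-\widetilde{S}_{\Gamma,P}^{-1})u_\Gamma$ by decomposing the difference according to the two summands of \EQ{StInverse} and \EQ{StP}. Write
$\widetilde{S}_\Gamma^{-1}=T_1+T_2$ with $T_1=R_{\Gamma\Delta}^T\sum_i[\vvec{0}\ R^{(i)^T}_\Delta]A_{rr}^{{(i)}^{-1}}[\vvec{0};R^{(i)}_\Delta]R_{\Gamma\Delta}$ and $T_2=\Phi S_\Pi^{-1}\Phi^T$, and similarly $\widetilde{S}_{\Gamma,P}^{-1}=T_{1,P}+T_{2,P}$. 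Then the difference splits into three natural pieces: (i) $T_1-T_{1,P}$, driven by the local $A_{rr}^{(i)^{-1}}-A_{rr,P}^{(i)^{-1}}$ errors, (ii) the ``cross'' contribution $(\Phi-\Phi_P)S_\Pi^{-1}\Phi^T+\Phi_P S_\Pi^{-1}(\Phi-\Phi_P)^T$, which reduces to bounds on $A_{rr}^{(i)^{-1}}A_{cr}^{(i)^T}-A_{rr,P}^{(i)^{-1}}A_{cr,P}^{(i)^T}$, and (iii) the coarse piece $\Phi_P(S_\Pi^{-1}-S_{\Pi,P}^{-1})\Phi_P^T$, which will be rewritten as $\Phi_P S_\Pi^{-1}(S_{\Pi,P}-S_\Pi)S_{\Pi,P}^{-1}\Phi_P^T$ and then controlled by the local $A_{cc}^{(i)}$ and $A_{cr}^{(i)}A_{rr}^{(i)^{-1}}A_{cr}^{(i)^T}$ errors via \EQ{localSpi}.

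For each piece I will pull out the operator norms using the standard inequality $|u^TXu|\le\|X\|_2\,u^Tu$, bound the inner matrix factors by the quantities appearing in $\gamma_1,\gamma_2,\gamma_3$, and absorb the orthogonal restriction operators $R^{(i)}_\Delta,R_{\Gamma\Delta},R^{(i)}_\Pi$ into absolute constants (since they are $0/1$-valued and act subdomain-wise, producing the summations over $i$ in the definitions of $\gamma_1,\gamma_2,\gamma_3$). Piece (i) yields a contribution bounded by $C\gamma_1\,u_\Gamma^T u_\Gamma$; piece (ii) by $C\|S_\Pi^{-1}\|_2\|\Phi_P\|_2\gamma_2\,u_\Gamma^T u_\Gamma$ after using $\|\Phi\|_2\le\|\Phi_P\|_2+\|\Phi-\Phi_P\|_2$ and absorbing a small term; piece (iii) by $C\|\Phi_P\|_2^2\|S_\Pi^{-1}\|_2\|S_{\Pi,P}^{-1}\|_2\gamma_3\,u_\Gamma^T u_\Gamma$ via the resolvent-style identity above.

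Finally, to convert the $u_\Gamma^T u_\Gamma$ factor into the $\widetilde{S}_\Gamma^{-1}$-norm on the right-hand side of the claim, I will use the analog of \EQ{uH1}, namely $u_\Gamma^T u_\Gamma=u_\Gamma^T\widetilde{S}_\Gamma^{-\frac12}\widetilde{S}_\Gamma\widetilde{S}_\Gamma^{-\frac12}u_\Gamma\le\|\widetilde{S}_\Gamma\|_2\,\|u_\Gamma\|_{\widetilde{S}_\Gamma^{-1}}^2$. Adding the three bounds gives exactly the claimed $\gamma_{PS}\|u_\Gamma\|_{\widetilde{S}_\Gamma^{-1}}^2$.

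The main obstacle is piece (iii): the term $S_\Pi^{-1}-S_{\Pi,P}^{-1}$ is not directly one of the local PC approximation errors, so it has to be rewritten as $S_\Pi^{-1}(S_{\Pi,P}-S_\Pi)S_{\Pi,P}^{-1}$, and then $S_{\Pi,P}-S_\Pi$ must be expanded through the subdomain-local formula $S_\Pi^{(i)}=A_{cc}^{(i)}-A_{cr}^{(i)}A_{rr}^{(i)^{-1)}}A_{cr}^{(i)^T}$ to expose the $A_{cc}^{(i)}$ and composite $A_{cr}A_{rr}^{-1}A_{cr}^T$ errors that appear in $\gamma_3$; care is needed because $S_{\Pi,P}$ is not guaranteed to be SPD in the SG case (see Remark \ref{remark:Cspd}), so the bound is stated in terms of $\|S_{\Pi,P}^{-1}\|_2$ with the tacit assumption, justified in the regime of Lemmas \ref{pcestimate}--\ref{pcinverseestimate}, that this norm is finite on the event of interest.
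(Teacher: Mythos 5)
Your proposal is correct and follows essentially the same route as the paper's proof: split $\widetilde{S}^{-1}_\Gamma-\widetilde{S}^{-1}_{\Gamma,P}$ into the local block-diagonal part (giving $\gamma_1$) and the coarse part, telescope $\Phi S_\Pi^{-1}\Phi^T-\Phi_P S_{\Pi,P}^{-1}\Phi_P^T$ into $\Phi-\Phi_P$ and $S_\Pi^{-1}-S_{\Pi,P}^{-1}$ contributions handled via the identity $S_\Pi^{-1}-S_{\Pi,P}^{-1}=S_\Pi^{-1}(S_{\Pi,P}-S_\Pi)S_{\Pi,P}^{-1}$ and the subdomain formula for $S_\Pi^{(i)}$, and finish with $u_\Gamma^Tu_\Gamma\le\|\widetilde{S}_\Gamma\|_2\|u_\Gamma\|^2_{\widetilde{S}_\Gamma^{-1}}$. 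The only cosmetic difference is the exact arrangement of the telescoping (and the harmless replacement of $\|\Phi\|_2$ by $\|\Phi_P\|_2$ plus an absorbed remainder), which the paper also does implicitly.
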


\begin{proof}
\begin{align*}
&|u^T_\Gamma\widetilde{S}^{-1}_{\Gamma} u_\Gamma-u^T_\Gamma\widetilde{S}^{-1}_{\Gamma,P} u_\Gamma|\\
&=|u^T_\Gamma R_{\Gamma\Delta}^T\left(\sum_{i=1}^{N}
 \left[ \vvec{0} ~ R^{(i)^T}_{\Delta}
 \right] A_{rr}^{{(i)}^{-1}}  \left[ \begin{array}{c} \vvec{0}  \\
R^{(i)}_{\Delta} \end{array} \right] \right)R_{\Gamma \Delta}u_\Gamma + u^T_\Gamma\Phi
S_{\Pi}^{-1}  \Phi^Tu_\Gamma\\
&\quad-u^T_\Gamma R_{\Gamma\Delta}^T\left(\sum_{i=1}^{N}
 \left[ \vvec{0} ~ R^{(i)^T}_{\Delta}
 \right] A_{rr,P}^{{(i)}^{-1}}  \left[ \begin{array}{c} \vvec{0}  \\
R^{(i)}_{\Delta} \end{array} \right] \right)R_{\Gamma \Delta}u_\Gamma
-u^T_\Gamma\Phi_P
S_{\Pi,P}^{-1}  \Phi^T_Pu_\Gamma|\\
&\leq |u^T_\Gamma R_{\Gamma\Delta}^T\left(\sum_{i=1}^{N}
 \left[ \vvec{0} ~ R^{(i)^T}_{\Delta}
 \right] (A_{rr}^{{(i)}^{-1}}-A_{rr,P}^{{(i)}^{-1}})  \left[ \begin{array}{c} \vvec{0}  \\
R^{(i)}_{\Delta} \end{array} \right] \right)R_{\Gamma \Delta}u_\Gamma|\\
&\quad+|u^T_\Gamma\Phi
S_{\Pi}^{-1}  \Phi^Tu_\Gamma
-u^T_\Gamma\Phi_P
S_{\Pi,P}^{-1}  \Phi^T_Pu_\Gamma|,
\end{align*}
where
\begin{align*}
&|u^T_\Gamma R_{\Gamma\Delta}^T\left(\sum_{i=1}^{N}
 \left[ \vvec{0} ~ R^{(i)^T}_{\Delta}
 \right] (A_{rr}^{{(i)}^{-1}}-A_{rr,P}^{{(i)}^{-1}})  \left[ \begin{array}{c} \vvec{0}  \\
R^{(i)}_{\Delta} \end{array} \right] \right)R_{\Gamma \Delta}u_\Gamma|\\
\leq& C
\sum\limits_{i=1}^N \|{A^{(i)}_{rr}}^{-1}-A_{rr,P}^{{(i)}^{-1}}\|_2
u^T_\Gamma u_\Gamma=\gamma_1u^T_\Gamma u_\Gamma
\end{align*}
and
\begin{align*}
&|u^T_\Gamma\Phi
S_{\Pi}^{-1}  \Phi^Tu_\Gamma
-u^T_\Gamma\Phi_P
S_{\Pi,P}^{-1}  \Phi^T_Pu_\Gamma|\\
&\leq\|\Phi-\Phi_P\|_2\|S^{-1}_{\Pi}\|_2\|\Phi\|_2u^T_\Gamma u_\Gamma
+\|\Phi_P\|_2\|S_{\Pi}^{-1}  \Phi^T-S_{\Pi,P}^{-1}  \Phi^T_P\|_2u^T_\Gamma u_\Gamma.
\end{align*}
Therefore, we need to estimate $\|\Phi-\Phi_P\|_2$ and $\|S_{\Pi}^{-1}  \Phi^T-S_{\Pi,P}^{-1}  \Phi^T_P\|_2$. We can see that 
\begin{align*}
&\|\Phi-\Phi_P\|_2
=\|R^T_{\Gamma\Delta}\sum\limits_{i=1}^N\left[ \begin{array}{c} \vvec{0}~
R^{(i)}_{\Delta} \end{array} \right]({A^{(i)}_{rr}}^{-1}{A^{(i)}_{cr}}^{T}-A_{rr,P}^{{(i)}^{-1}}A_{cr,P}^{(i)^T})R^{(i)}_{\Pi}\|_2\\
&\leq C\sum\limits_{i=1}^N\|{A^{(i)}_{rr}}^{-1}{A^{(i)}_{cr}}^{T}-A_{rr,P}^{{(i)}^{-1}}{{A_{cr,P}^{{(i)}^T}}}\|_2=\gamma_2,
\end{align*}
and
\begin{align*}
\|S_{\Pi}-S_{\Pi,P}\|_2&=\|\sum\limits_{i=1}^N(R^{(i)}_{\Pi}\{(A^{(i)}_{cc}-A^{(i)}_{cc,P})-
(A^{(i)}_{cr}A^{(i)^{-1}}_{rr}A^{(i)^T}_{cr}-
A^{(i)}_{cr,P}A^{(i)^{-1}}_{rr,P}A^{(i)^T}_{cr,P})\}R^{(i)}_{\Pi})\|_2\\
&\leq C\sum\limits_{i=1}^N(\|A^{(i)}_{cc}-A^{(i)}_{cc,P}\|_2+\|A^{(i)}_{cr}A_{rr}^{{(i)}^{-1}}A_{cr}^{{(i)}^{T}}-A^{(i)}_{cr,P}A_{rr,P}^{{(i)}^{-1}}A_{cr,P}^{{(i)}^{T}}\|_2=\gamma_3.
\end{align*}
Therefore,
\begin{align*}
&\|S^{-1}_{\Pi}-S^{-1}_{\Pi,P}\|_2=\|S^{-1}_{\Pi}(S_{\Pi}-S_{\Pi,P})S^{-1}_{\Pi,P}\|_2\\
&\leq\|S^{-1}_{\Pi}\|_2\|S^{-1}_{\Pi,P}\|_2\|S_{\Pi}-S_{\Pi,P}\|_2\leq \|S^{-1}_{\Pi}\|_2\|S^{-1}_{\Pi,P}\|_2\gamma_3,
\end{align*}
and 
\begin{align*}
    &\|S^{-1}_{\Pi}\Phi^T-S^{-1}_{\Pi,P}\Phi^T_P\|_2\\
&\leq \|S^{-1}_{\Pi}\Phi^T-S^{-1}_{\Pi}\Phi^T_P\|_2+\|S^{-1}_{\Pi}\Phi^T_P-S^{-1}_{\Pi,P}\Phi^T_P\|_2\\
&\leq \|S^{-1}_{\Pi}\|_2\|\Phi-\Phi_P\|_2+
\|S^{-1}_{\Pi}-S^{-1}_{\Pi,P}\|_2\|\Phi_P\|_2\\
&\leq \|S^{-1}_{\Pi}\|_2\gamma_2+\|\Phi_P\|_2\|S^{-1}_{\Pi}\|_2\|S^{-1}_{\Pi,P}\|_2\gamma_3.
\end{align*}
Since
\begin{equation*}
\begin{aligned}
&|u^T_\Gamma\widetilde{S}^{-1}_{\Gamma} u_\Gamma-u^T_\Gamma\widetilde{S}^{-1}_{\Gamma,P} u_\Gamma|\\
&\leq C(\gamma_1
+\|S^{-1}_{\Pi}\|_2\|\Phi_P\|_2\gamma_2+\|\Phi_P\|^2_2\|S^{-1}_{\Pi}\|_2\|S^{-1}_{\Pi,P}\|_2\gamma_3)u^T_\Gamma u_\Gamma,
\end{aligned}
\end{equation*}
and
\begin{align*}
|u^T_\Gamma u_\Gamma|&=
|u^T_\Gamma\widetilde{S}^{-\frac12}_\Gamma \widetilde{S}_\Gamma
\widetilde{S}^{-\frac12}_\Gamma
u_\Gamma|\leq \|\widetilde{S}_\Gamma\|_2
u^T_\Gamma\widetilde{S}^{-1}_\Gamma
u_\Gamma=\|\widetilde{S}_\Gamma\|_2
\|u_\Gamma\|^2_{\widetilde{S}^{-1}_\Gamma},\end{align*}
we have 
\begin{equation}\label{equation:gammaPS}
\begin{aligned}
&|u^T_\Gamma\widetilde{S}^{-1}_{\Gamma} u_\Gamma-u^T_\Gamma\widetilde{S}^{-1}_{\Gamma,P} u_\Gamma|\\
&\leq  C(\gamma_1+\|S^{-1}_{\Pi}\|_2\|\Phi_P\|_2\gamma_2+\|\Phi_P\|^2_2\|S^{-1}_{\Pi}\|_2\|S^{-1}_{\Pi,P}\|_2\gamma_3)\|\widetilde{S}_\Gamma\|_2\|u_\Gamma\|^2_{\widetilde{S}^{-1}_\Gamma}\\
&=\gamma_{PS}\|u_\Gamma\|^2_{\widetilde{S}^{-1}_\Gamma}.
\end{aligned}
\end{equation}
\end{proof}
\begin{lemma}\label{gammaPSinequality}
We have the following estimates
\begin{equation}\label{equation:GammaPS}
\lim\limits_{M_{KL},M^{(i)}_{KL}\rightarrow\infty}\lim\limits_{d\rightarrow\infty} P(\{\theta:\gamma_{PS}< 1\})=1,
\end{equation}
and
\begin{equation}\label{equation:SgammaP}
\lim\limits_{M_{KL},M^{(i)}_{KL}\rightarrow\infty}\lim\limits_{d\rightarrow\infty} P(\{\theta:u^T_\Gamma\widetilde{S}_{\Gamma,P} u_\Gamma\leq \frac1{1-\gamma_{PS}}u^T_\Gamma\widetilde{S}_{\Gamma} u_\Gamma\})=1.
\end{equation}
\end{lemma}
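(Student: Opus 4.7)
The plan is to mimic the proof of Lemma \ref{gammaFSinequality} step by step, treating $\gamma_{PS}$ as a product of PC-approximation errors ($\gamma_1,\gamma_2,\gamma_3$) together with a set of auxiliary norm factors that are not uniformly bounded but have bounded expectation. First, given any $\epsilon>0$, I will choose $N,N^{(i)},M$ so large that Lemmas \ref{pcestimate} and \ref{pcinverseestimate} apply with error bound $\epsilon^6$ whenever $M_{KL}>N$, $M_{KL}^{(i)}>N^{(i)}$, and $d>M$. Summing over the $N$ subdomains and using the Cauchy--Schwarz inequality then gives $E\gamma_1,E\gamma_2,E\gamma_3\le C\epsilon^3$, where the constants absorb the finite number of subdomains.

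Second, I carve out a ``good set'' $S_0$ on which the random norm factors appearing in the formula for $\gamma_{PS}$ from Lemma \ref{lemma:SP} are each bounded by $1/\sqrt{\delta}$ with $\delta=\epsilon^2$. Specifically, I take
\[
S_0=\{\theta:\|\widetilde{S}_\Gamma\|_2\le 1/\sqrt\delta,\ \|S_\Pi^{-1}\|_2\le 1/\sqrt\delta,\ \|S_{\Pi,P}^{-1}\|_2\le 1/\sqrt\delta,\ \|\Phi_P\|_2\le 1/\sqrt\delta\}.
\]
Since each of these norms has bounded expectation (by \cite{CPDE2012} and the construction of the deterministic BDDC coarse operator together with the PC convergence of its surrogate), Markov's inequality in the form \EQ{deltaA} gives $P(S_0^c)\le C\sqrt\delta=C\epsilon$, exactly paralleling \EQ{expectationgammaFSall}.

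Third, on $S_0$ the expression for $\gamma_{PS}$ is dominated by $C(\gamma_1/\sqrt\delta+\gamma_2/\delta+\gamma_3/\delta^{3/2})$, so
\[
E(\gamma_{PS}\,\mathbf{1}_{S_0})\le C\epsilon^3\!\left(\tfrac{1}{\sqrt\delta}+\tfrac{1}{\delta}+\tfrac{1}{\delta^{3/2}}\right)\le C\epsilon^3\cdot\epsilon^{-3}=C\epsilon,
\]
and Markov's inequality yields $P(\{\gamma_{PS}\ge 1\}\cap S_0)\le C\epsilon$. Combined with $P(S_0^c)\le C\epsilon$ this gives $P(\gamma_{PS}\ge 1)\le C\epsilon$, proving \EQ{GammaPS}. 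The second assertion \EQ{SgammaP} then follows immediately from Lemma \ref{lemma:SP}: whenever $\gamma_{PS}<1$ one has $(1-\gamma_{PS})u_\Gamma^T\widetilde{S}_\Gamma^{-1}u_\Gamma\le u_\Gamma^T\widetilde{S}_{\Gamma,P}^{-1}u_\Gamma$ (interpreting the stated inequality in the inverse form consistent with Lemma \ref{lemma:SP}), so the event in \EQ{SgammaP} contains $\{\gamma_{PS}<1\}$ and its probability tends to $1$.

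The main obstacle is the bookkeeping of powers of $\epsilon$ and $\delta$ in the third step. Unlike $\gamma_{FS}$, whose expression has at most one ``bounded'' norm factor multiplying each PC-error term, $\gamma_{PS}$ contains the triple product $\|\Phi_P\|_2^2\|S_\Pi^{-1}\|_2\|S_{\Pi,P}^{-1}\|_2$ multiplying $\gamma_3$, so one must sharpen the PC tolerance to $\epsilon^6$ (rather than $\epsilon^3$ as in Lemma \ref{gammaFSinequality}) to overcome the $\delta^{-3/2}=\epsilon^{-3}$ blow-up. A subsidiary technical point is verifying the boundedness of $E\|S_\Pi^{-1}\|_2$ and $E\|S_{\Pi,P}^{-1}\|_2$; this uses Remark \ref{remark:Cspd} together with the fact that on a set of probability tending to one the stochastic coarse operator $S_{\Pi,P}$ is positive definite and differs from $S_\Pi$ in operator norm by an arbitrarily small quantity, which allows the inverse norms to be controlled by the deterministic coarse problem's conditioning.
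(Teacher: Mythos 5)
Your overall strategy is exactly what the paper intends: its own proof of this lemma consists of the single sentence that it is proved ``similarly to the proof of Lemma \ref{gammaFSinequality} using Lemmas \ref{pcestimate} and \ref{pcinverseestimate},'' and your proposal is a faithful expansion of that template (Markov on a good set, expectation bound on the complement). However, there is a concrete arithmetic slip in your third step. With your choice of $S_0$, each of $\|\widetilde{S}_\Gamma\|_2$, $\|S_\Pi^{-1}\|_2$, $\|S_{\Pi,P}^{-1}\|_2$, $\|\Phi_P\|_2$ is bounded by $\delta^{-1/2}$ \emph{individually}, so on $S_0$ the three terms of $\gamma_{PS}$ are dominated by $\gamma_1\delta^{-1/2}$, $\gamma_2\delta^{-3/2}$ (three factors: $\|S_\Pi^{-1}\|_2\|\Phi_P\|_2\|\widetilde{S}_\Gamma\|_2$), and $\gamma_3\delta^{-5/2}$ (five factors: $\|\Phi_P\|_2^2\|S_\Pi^{-1}\|_2\|S_{\Pi,P}^{-1}\|_2\|\widetilde{S}_\Gamma\|_2$), not $\gamma_2\delta^{-1}$ and $\gamma_3\delta^{-3/2}$ as you wrote. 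With $\delta=\epsilon^2$ and $E\gamma_3\le C\epsilon^3$ this yields $C\epsilon^3\cdot\epsilon^{-5}=C\epsilon^{-2}$, which does not tend to zero. The fix is routine --- either take the PC tolerance in Lemmas \ref{pcestimate} and \ref{pcinverseestimate} to a higher power (e.g.\ $\epsilon^{12}$ so that $E\gamma_j\le C\epsilon^6$), or, as the paper does in \EQ{boundset} for $\gamma_{FS}$, define $S_0$ by bounding the \emph{products} of norm factors (e.g.\ $\|\Phi_P\|_2^2\|S_\Pi^{-1}\|_2\|S_{\Pi,P}^{-1}\|_2\|\widetilde{S}_\Gamma\|_2\le\delta^{-1/2}$ as a single condition), which keeps the complement probability at $C\sqrt{\delta}$ while reducing the blow-up to one factor of $\delta^{-1/2}$ --- but as written the displayed estimate fails. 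Your flagging of the boundedness of $E\|S_{\Pi,P}^{-1}\|_2$ (not guaranteed in view of Remark \ref{remark:Cspd}) is a legitimate point that the paper itself does not address, and your treatment of the $\widetilde{S}_{\Gamma,P}$ versus $\widetilde{S}_{\Gamma,P}^{-1}$ ambiguity in \EQ{SgammaP} is the reasonable reading.
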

\begin{proof} We can prove this lemma similarly to the proof of
Lemma \ref{gammaFSinequality} using Lemmas \ref{pcestimate} and \ref{pcinverseestimate}.
\end{proof}

Based on the detailed discussion about the primal constraints and the
scaling operator choices in  \cite{Widlund:2020:BDDC}, we make the following
assumption. {Recall $E_D$ is an average operator defined in  \EQ{ED}.}
  \begin{assumption}\label{assump:ED}
    A set of primal contraints and a scaling operator $D$, selected in the BDDC
    algorithms, ensure that  $\|E_Du_\Gamma\|^2 _{\Stilde_\Gamma}\le
    C^2_{ED}\|u_\Gamma\|_{\Stilde_\Gamma}^2$, for any $u_\Gamma\in
    \Wtilde_\Gamma$. {Here $C_{ED}$ is a constant which
      is independent of $\theta$ but 
      might depend on $H$
    and $h$. {For example, in  two dimensions,
  $C^2_{ED}=C\left(1+\log\frac{H}{h}\right)^2$ when the primal
  constraints 
include the vertices of each subdomain and the cofficient does not
change too much inside each subdomain}.  }
    
  \end{assumption}

\begin{lemma}\label{lemma:PF} Under Assumption \ref{assump:PC}  and
  the conditions in Lemma \ref{pcinverseestimate}, for $u_\Gamma\in\widetilde{W}_\Gamma$, 
define $C_{PF}=\frac{1}{(1-\gamma_{PS})(1-\gamma_{FS})}$,
then we have
\begin{equation}\label{equation:CPFinequality}
\lim\limits_{M_{KL},M^{(i)}_{KL}\rightarrow\infty}\lim\limits_{d\rightarrow\infty} P(\{\theta:u^T_\Gamma\widetilde{S}_{\Gamma,P} u_\Gamma\leq  C_{PF}u^T_\Gamma\widetilde{S}_{\Gamma,F}u_\Gamma\})=1,
\end{equation}
equivalently,
\begin{equation*}
\lim\limits_{M_{KL},M^{(i)}_{KL}\rightarrow\infty}\lim\limits_{d\rightarrow\infty}
P(\{\theta:u^T_\Gamma\widetilde{S}^{-1}_{\Gamma,F} u_\Gamma\leq  C_{PF}u^T_\Gamma\widetilde{S}^{-1}_{\Gamma,P}u_\Gamma\})=1.
\end{equation*}
\end{lemma}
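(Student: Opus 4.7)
The plan is straightforward: I would chain Lemmas \ref{gammaFSinequality} and \ref{gammaPSinequality} in the positive-definite (Loewner) order and then handle the probability of the intersection by a union bound. First, by \EQ{SgammaP} in Lemma \ref{gammaPSinequality}, there is an event $\Theta_1$ whose probability tends to $1$ in the iterated limit on which
$$ u^T_\Gamma \widetilde{S}_{\Gamma,P} u_\Gamma \le \frac{1}{1-\gamma_{PS}} u^T_\Gamma \widetilde{S}_\Gamma u_\Gamma \quad \text{for all } u_\Gamma \in \widetilde{W}_\Gamma. $$
Second, by \EQ{SgammaF} in Lemma \ref{gammaFSinequality}, there is an event $\Theta_2$ whose probability also tends to $1$ on which
$$ u^T_\Gamma \widetilde{S}_\Gamma u_\Gamma \le \frac{1}{1-\gamma_{FS}} u^T_\Gamma \widetilde{S}_{\Gamma,F} u_\Gamma \quad \text{for all } u_\Gamma \in \widetilde{W}_\Gamma. $$
On the intersection $\Theta_1 \cap \Theta_2$, concatenating these two bounds yields exactly the inequality in \EQ{CPFinequality} with $C_{PF}=\frac{1}{(1-\gamma_{PS})(1-\gamma_{FS})}$.

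To promote this from a pointwise implication to a probability statement, I would invoke the elementary union bound $P(\Theta_1 \cap \Theta_2) \ge P(\Theta_1) + P(\Theta_2) - 1$. Since both $P(\Theta_1)$ and $P(\Theta_2)$ tend to $1$ in the same iterated limit $M_{KL}, M^{(i)}_{KL} \to \infty$ followed by $d \to \infty$---both being produced by the same preparatory estimates (Lemmas \ref{pcestimate} and \ref{pcinverseestimate}) under Assumption \ref{assump:PC}---the intersection probability also tends to $1$, giving the first claim. For the equivalent inverse form, I would apply operator monotonicity of matrix inversion on the cone of symmetric positive definite matrices: from $\widetilde{S}_{\Gamma,P} \le C_{PF} \widetilde{S}_{\Gamma,F}$ in the Loewner sense, one multiplies by $C_{PF}^{-1}$ and inverts both sides to obtain $\widetilde{S}^{-1}_{\Gamma,F} \le C_{PF}\, \widetilde{S}^{-1}_{\Gamma,P}$ on the same event.

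The main obstacle, such as it is, is purely bookkeeping: one must verify that the two high-probability events $\Theta_1$ and $\Theta_2$ can be forced to have probabilities approaching $1$ for a common regime of the parameters $N$, $N^{(i)}$, and $M$ appearing in Lemmas \ref{pcestimate} and \ref{pcinverseestimate}. This is immediate, since in both cases the thresholds are derived from the same two preparatory estimates, and one may simply take the maximum of the two sets of thresholds to control both events simultaneously. No additional analytic work beyond what already appears in the preceding lemmas is required, and the proof is at most a few lines after the observation that the two Loewner inequalities compose multiplicatively.
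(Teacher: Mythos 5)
Your proposal is correct and follows essentially the same route as the paper: the paper also defines the two events from \EQ{SgammaF} and \EQ{SgammaP}, observes that their intersection is contained in the target event, and bounds the complement by a union bound, so the two arguments are the same up to notation. Your explicit treatment of the inverse form via operator monotonicity is a small addition the paper leaves implicit, but it does not change the approach.
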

\begin{proof}
 Define  $A=\{\theta:u^T_\Gamma\widetilde{S}_{\Gamma} u_\Gamma\leq
 \frac1{1-\gamma_{FS}}u^T_\Gamma\widetilde{S}_{\Gamma,F} u_\Gamma\}$,
 ${B=\{\theta:u^T_\Gamma\widetilde{S}_{\Gamma,P} u_\Gamma\leq
 \frac1{1-\gamma_{PS}}u^T_\Gamma\widetilde{S}_{\Gamma}u_\Gamma\}}$, and 
 $C=\{\theta:u^T_\Gamma\widetilde{S}_{\Gamma,P} u_\Gamma\leq C_{PF}u^T_\Gamma\widetilde{S}_{\Gamma,F}\}$.
 Since $A\bigcap B\subset C$, we have $C^c\subset A^c\bigcup B^c$ and 
  \begin{align*}
 &P(\{\theta:u^T_\Gamma\widetilde{S}_{\Gamma,P} u_\Gamma> C_{PF}u^T_\Gamma\widetilde{S}_{\Gamma,F}\})\\
 &\leq P(\{\theta:u^T_\Gamma\widetilde{S}_{\Gamma} u_\Gamma>\frac1{1-\gamma_{FS}}u^T_\Gamma\widetilde{S}_{\Gamma,F} u_\Gamma\}) +P(\{\theta:u^T_\Gamma\widetilde{S}_{\Gamma,P} u_\Gamma> \frac1{1-\gamma_{PS}}u^T_\Gamma\widetilde{S}_{\Gamma} u_\Gamma\}),
 \end{align*}
which approaches 0 as $M_{KL},M^{(i)}_{KL},d$ go to infinity by \EQ{SgammaF} in Lemma \ref{gammaFSinequality} and 
\EQ{SgammaP} in Lemma \ref{gammaPSinequality}. This implies \EQ{CPFinequality}.  
  \end{proof}

\begin{lemma}\label{lemma:Ed} Under Assumption \ref{assump:ED},
for ${u_\Gamma}\in\widetilde{W}_\Gamma$, we have 
\begin{align*}
\lim\limits_{M_{KL},M^{(i)}_{KL}\rightarrow\infty}\lim\limits_{d\rightarrow\infty} P(\{\theta:\|E_D u_\Gamma\|^2_{\widetilde{S}_{\Gamma,F}}\leq C^2_{ED,F}\|u_{\Gamma}\|^2_{\widetilde{S}_{\Gamma,F}}\})=1,
\end{align*}
where $C^2_{ED,F}=\frac{1+\gamma_{FS}}{1-\gamma_{FS}} C^2_{ED}$.
\end{lemma}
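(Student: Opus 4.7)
The plan is to sandwich $\|E_D u_\Gamma\|^2_{\widetilde{S}_{\Gamma,F}}$ between expressions involving the exact Schur norm $\|\cdot\|_{\widetilde{S}_\Gamma}$, apply Assumption \ref{assump:ED} in the middle, and then invoke Lemma \ref{gammaFSinequality} to transfer back to the approximate Schur norm on the right-hand side. Concretely, from Lemma \ref{lemma:SF} we directly obtain the deterministic upper bound
\[
u^T_\Gamma \widetilde{S}_{\Gamma,F}\, u_\Gamma \le (1+\gamma_{FS})\, u^T_\Gamma \widetilde{S}_{\Gamma}\, u_\Gamma
\]
for every $u_\Gamma\in\widetilde W_\Gamma$, with no probabilistic restriction needed. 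Applying this to $E_D u_\Gamma$ and then using Assumption \ref{assump:ED} yields
\[
\|E_D u_\Gamma\|^2_{\widetilde{S}_{\Gamma,F}} \le (1+\gamma_{FS})\,\|E_D u_\Gamma\|^2_{\widetilde{S}_\Gamma} \le (1+\gamma_{FS})\, C^2_{ED}\,\|u_\Gamma\|^2_{\widetilde{S}_\Gamma}.
\]

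The remaining ingredient is to control $\|u_\Gamma\|^2_{\widetilde{S}_\Gamma}$ by $\|u_\Gamma\|^2_{\widetilde{S}_{\Gamma,F}}$ from above. This is exactly estimate \eqref{equation:SgammaF} in Lemma \ref{gammaFSinequality}, which gives $u^T_\Gamma \widetilde{S}_\Gamma u_\Gamma \le \frac{1}{1-\gamma_{FS}}\, u^T_\Gamma \widetilde{S}_{\Gamma,F}\, u_\Gamma$ on an event whose probability tends to $1$ as $M_{KL}, M^{(i)}_{KL}, d\to\infty$. Chaining this with the previous bound gives
\[
\|E_D u_\Gamma\|^2_{\widetilde{S}_{\Gamma,F}} \le \frac{1+\gamma_{FS}}{1-\gamma_{FS}}\, C^2_{ED}\,\|u_\Gamma\|^2_{\widetilde{S}_{\Gamma,F}} = C^2_{ED,F}\,\|u_\Gamma\|^2_{\widetilde{S}_{\Gamma,F}}
\]
on the same event, which is exactly the claimed statement.

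Since the only probabilistic step is the one direction requiring $\gamma_{FS}<1$ (and more precisely \eqref{equation:SgammaF}), and this is already furnished by Lemma \ref{gammaFSinequality}, there is no substantial obstacle; the proof reduces to a two-line application of Lemmas \ref{lemma:SF}, \ref{gammaFSinequality} and Assumption \ref{assump:ED}. The only subtlety to note is that the upper bound $\widetilde{S}_{\Gamma,F}\preceq(1+\gamma_{FS})\widetilde{S}_\Gamma$ holds deterministically from Lemma \ref{lemma:SF} regardless of whether $\gamma_{FS}<1$, so the event on which the final inequality holds is precisely the event from \eqref{equation:SgammaF}, whose probability converges to $1$ in the stated iterated limit.
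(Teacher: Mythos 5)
Your proposal is correct and follows essentially the same route as the paper's own proof: take $E_D u_\Gamma$ in Lemma \ref{lemma:SF} to get $\|E_D u_\Gamma\|^2_{\widetilde{S}_{\Gamma,F}}\leq(1+\gamma_{FS})C^2_{ED}\|u_\Gamma\|^2_{\widetilde{S}_\Gamma}$ via Assumption \ref{assump:ED}, then transfer back to the $\widetilde{S}_{\Gamma,F}$-norm using \EQ{SgammaF} from Lemma \ref{gammaFSinequality}. Your explicit observation that the bound $u^T_\Gamma\widetilde{S}_{\Gamma,F}u_\Gamma\le(1+\gamma_{FS})u^T_\Gamma\widetilde{S}_\Gamma u_\Gamma$ holds deterministically, so that only the reverse direction contributes a probabilistic event, is a correct and slightly more careful reading of the same argument.
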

\begin{proof}
{Taking $u_\Gamma$ in Lemma \LA{SF} to be $E_Du_\Gamma $ and using Assumption \ref{assump:ED},} we have
\begin{align*}
\|E_Du_\Gamma\|^2_{\widetilde{S}_{\Gamma,F}}
\leq (1+\gamma_{FS})\|E_Du_\Gamma\|^2_{\widetilde{S}_{\Gamma}}
\leq(1+\gamma_{FS}) C^2_{ED}
\|u_\Gamma\|^2_{\widetilde{S}_\Gamma},
\end{align*}
then
\begin{align*}
&P(\{\theta:\|E_D u_\Gamma\|^2_{\widetilde{S}_{\Gamma,F}}> C^2_{ED,F}\|u_{\Gamma}\|^2_{\widetilde{S}_{\Gamma,F}}\})\\
&\leq P(\{\theta:(1+\gamma_{FS})C^2_{ED}\|u_\Gamma\|^2_{\widetilde{S}_\Gamma}> C^2_{ED,F}
\|u_\Gamma\|^2_{\widetilde{S}_{\Gamma,F}}\})\\
&=P(\{\theta:\|u_\Gamma\|^2_{\widetilde{S}_\Gamma}>\frac1{1-\gamma_{FS}}
\|u_\Gamma\|^2_{\widetilde{S}_{\Gamma,F}}\}),
\end{align*}
which goes to 0 as $M_{KL},M^{(i)}_{KL},d$ approach infinity by Lemma \ref{gammaFSinequality}. This implies the result.

\end{proof}

\begin{theorem}\label{theorem:cond}
For $u_\Gamma\in\widehat W_\Gamma,$
the condition number of the preconditioned operator $M^{-1}F$ follows 
$$\lim\limits_{M_{KL},M^{(i)}_{KL}\rightarrow\infty}\lim\limits_{d\rightarrow\infty} P(\{\theta:\frac{1}{C_{PF}}u^T_\Gamma Mu_\Gamma\leq
u_\Gamma Fu_\Gamma\leq CC_{ED,F}^2C_{PF}
u^T_\Gamma Mu_\Gamma\})=1,$$
where the preconditioner 
$M^{-1}=\Rtilde^T_{D,\Gamma}\Stilde^{-1}_{\Gamma,P}\Rtilde_{D,\Gamma}$
and the operator
$F=S_{\Gamma,F}=\Rtilde_\Gamma^T\Stilde_{\Gamma,F}\Rtilde_\Gamma$,
the constants $C_{ED,F}$ and $C_{PF}$ are defined in Lemmas
\LA{Ed}, and \LA{PF}, respectively.
\end{theorem}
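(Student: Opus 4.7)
The plan is to combine the standard deterministic BDDC condition number analysis with the spectral equivalences established in the preceding lemmas. Introduce an auxiliary ``consistent'' preconditioner
\begin{equation*}
\Mtilde^{-1}=\Rtilde^T_{D,\Gamma}\Stilde^{-1}_{\Gamma,F}\Rtilde_{D,\Gamma},
\end{equation*}
which uses the same approximation $\Stilde_{\Gamma,F}$ as the operator $F$; the strategy is first to prove the two-sided bound with $\Mtilde$ in place of $M$, and then transfer from $\Mtilde$ to $M$ using Lemma \LA{PF}.

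For the first step, I would invoke the classical abstract BDDC argument with $\Stilde_{\Gamma,F}$ playing the role of $\Stilde_\Gamma$. On the event (of probability tending to $1$ by Lemma \LA{SF}) on which $\Stilde_{\Gamma,F}$ is SPD, the partition-of-unity identity $\Rtilde^T_{D,\Gamma}\Rtilde_\Gamma=I$, together with the averaging-operator bound supplied by Lemma \LA{Ed}, yields, for every $u_\Gamma\in\widehat{W}_\Gamma$,
\begin{equation*}
u^T_\Gamma\Mtilde u_\Gamma \le u^T_\Gamma F u_\Gamma \le C\,C^2_{ED,F}\,u^T_\Gamma\Mtilde u_\Gamma .
\end{equation*}
The lower bound is the usual $\lambda_{\min}\ge 1$ estimate (an extension argument based only on the partition of unity), while the upper bound is the classical BDDC estimate which requires only the averaging-operator norm bound in the $\Stilde_{\Gamma,F}$-inner product furnished by Lemma \LA{Ed}.

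For the second step, I would transfer between $\Mtilde$ and $M$. Lemma \LA{PF} gives $\Stilde^{-1}_{\Gamma,F}\le C_{PF}\,\Stilde^{-1}_{\Gamma,P}$, which after sandwiching by $\Rtilde^T_{D,\Gamma}$ and $\Rtilde_{D,\Gamma}$ becomes $\Mtilde^{-1}\le C_{PF}\,M^{-1}$, equivalently $M\le C_{PF}\,\Mtilde$. The reverse direction $\Stilde^{-1}_{\Gamma,P}\le C_{PF}\,\Stilde^{-1}_{\Gamma,F}$, equivalently $\Mtilde\le C_{PF}\,M$, is not stated explicitly in Lemma \LA{PF} but follows from the two-sided sandwiches $(1-\gamma_{FS})\Stilde_\Gamma\le \Stilde_{\Gamma,F}\le (1+\gamma_{FS})\Stilde_\Gamma$ and $(1-\gamma_{PS})\Stilde^{-1}_\Gamma\le \Stilde^{-1}_{\Gamma,P}\le (1+\gamma_{PS})\Stilde^{-1}_\Gamma$ implicit in Lemmas \LA{SF} and \LA{SP}, combined with the elementary inequality $(1+\gamma_{FS})(1+\gamma_{PS})\le 1/[(1-\gamma_{FS})(1-\gamma_{PS})]=C_{PF}$ for $\gamma_{FS},\gamma_{PS}\in(0,1)$. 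Chaining the inequalities,
\begin{equation*}
\tfrac{1}{C_{PF}}u^T_\Gamma M u_\Gamma \le u^T_\Gamma\Mtilde u_\Gamma \le u^T_\Gamma F u_\Gamma \le C\,C^2_{ED,F}\,u^T_\Gamma\Mtilde u_\Gamma \le C\,C^2_{ED,F}\,C_{PF}\,u^T_\Gamma M u_\Gamma,
\end{equation*}
gives the claimed two-sided bound.

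Each required ingredient holds on an event whose probability tends to $1$ as $M_{KL},M^{(i)}_{KL},d\to\infty$ (Lemmas \ref{gammaFSinequality}, \ref{gammaPSinequality}, \LA{PF}, and \LA{Ed}); intersecting the finitely many events yields a set of probability tending to $1$ on which the above algebra is entirely deterministic, proving the theorem. The main obstacle is the bookkeeping of these probabilistic events so that, on their intersection, $\Stilde_{\Gamma,F}$, $\Stilde_{\Gamma,P}$, $\Mtilde$, $M$, and the coarse component $S_{\Pi,P}$ are all simultaneously SPD (making every matrix inversion in the chain legitimate), and verifying that the abstract deterministic BDDC upper bound indeed carries over verbatim with $\Stilde_{\Gamma,F}$ in place of $\Stilde_\Gamma$, using only the norm bound on $E_D$ supplied by Lemma \LA{Ed}.
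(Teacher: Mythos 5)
Your proposal is correct, but it reorganizes the argument relative to the paper. The paper runs the classical Cauchy--Schwarz chain directly on the mismatched pair $M^{-1}=\Rtilde^T_{D,\Gamma}\Stilde^{-1}_{\Gamma,P}\Rtilde_{D,\Gamma}$ and $F=\Rtilde_\Gamma^T\Stilde_{\Gamma,F}\Rtilde_\Gamma$: setting $w_\Gamma=Mu_\Gamma$, it derives $u_\Gamma^TMu_\Gamma\le u_\Gamma^T\Rtilde_\Gamma^T\Stilde_{\Gamma,P}\Rtilde_\Gamma u_\Gamma$ and $u_\Gamma^TFu_\Gamma\le\|E_D\Stilde_{\Gamma,F}^{-1}\Rtilde_{D,\Gamma}w_\Gamma\|^2_{\Stilde_{\Gamma,F}}$, then invokes Lemma \LA{PF} exactly twice --- once in the form $u^T\Stilde_{\Gamma,P}u\le C_{PF}u^T\Stilde_{\Gamma,F}u$ for the lower bound and once in the inverse form applied to the specific vector $\Rtilde_{D,\Gamma}Mu_\Gamma$ for the upper bound --- together with Lemma \LA{Ed}, and handles the probabilities by the same event-intersection bookkeeping ($A\cap B\subset C$, so $P(C^c)\le P(A^c)+P(B^c)$) that you describe. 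Your route instead factors through the auxiliary consistent preconditioner $\Mtilde^{-1}=\Rtilde^T_{D,\Gamma}\Stilde^{-1}_{\Gamma,F}\Rtilde_{D,\Gamma}$, proves the clean deterministic bound $u^T_\Gamma\Mtilde u_\Gamma\le u^T_\Gamma Fu_\Gamma\le CC^2_{ED,F}u^T_\Gamma\Mtilde u_\Gamma$ on the good event, and transfers to $M$ by spectral equivalence; this is conceptually tidier and isolates the "stochastic" content in the transfer step, but it costs you one ingredient the paper never needs, namely the reverse inequality $\Stilde^{-1}_{\Gamma,P}\le C_{PF}\Stilde^{-1}_{\Gamma,F}$ (equivalently $\Mtilde\le C_{PF}M$), which is not part of Lemma \LA{PF} as stated. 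You correctly notice this and supply it from the two-sided estimates implicit in Lemmas \LA{SF} and \LA{SP} via $(1+\gamma_{FS})(1+\gamma_{PS})\le\left[(1-\gamma_{FS})(1-\gamma_{PS})\right]^{-1}=C_{PF}$, so the gap is closed; just be aware that this extra direction must itself be packaged as an event of probability tending to $1$ (it holds on $\{\gamma_{FS}<1\}\cap\{\gamma_{PS}<1\}$, which Lemmas \ref{gammaFSinequality} and \ref{gammaPSinequality} control), and that the positive definiteness of $\Stilde_{\Gamma,F}$ needed for your step one is likewise guaranteed only on $\{\gamma_{FS}<1\}$ --- both points you flag and both are handled by intersecting finitely many events, exactly as the paper does.
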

\begin{proof}

{\bf Lower bound:}
Let 
\begin{equation}\label{equation:wgamma}
w_{\Gamma}=Mu_\Gamma=\left(\Rtilde_{D,\Gamma}^T\Stilde_{\Gamma,P}^{-1}\Rtilde_{D,\Gamma}\right)^{-1}u_\Gamma
\in  \What_{\Gamma}.
\end{equation}
Using the properties $\Rtilde^T_\Gamma \Rtilde_{D,\Gamma}=\Rtilde^T_{D,\Gamma}\Rtilde_\Gamma=I$ and \EQ{wgamma}, 
we have,
\begin{eqnarray}
&&u_\Gamma^TMu_\Gamma
=
u_\Gamma^T \left(\Rtilde_{D,\Gamma}^T\Stilde_{\Gamma,P}^{-1}\Rtilde_{D,\Gamma}\right)^{-1}u_\Gamma=u_\Gamma^Tw_\Gamma\nonumber\\
&=&u_\Gamma^T\Rtilde_\Gamma^T \Stilde_{\Gamma,P}\Stilde_{\Gamma,P}^{-1}\Rtilde_{D,\Gamma} w_\Gamma
=\left<\Rtilde_\Gamma u_\Gamma,\Stilde_{\Gamma,P}^{-1}\Rtilde_{D,\Gamma} w_\Gamma\right>_{\Stilde_{\Gamma,P}}\nonumber\\
&\le&\left<\Rtilde_\Gamma u_\Gamma,\Rtilde_\Gamma u_\Gamma\right>^{1/2}_{\Stilde_{\Gamma,P}}\left<\Stilde_{\Gamma,P}^{-1}\Rtilde_{D,\Gamma} w_\Gamma,\Stilde_{\Gamma,P}^{-1}\Rtilde_{D,\Gamma} w_\Gamma\right>^{1/2}_{\Stilde_{\Gamma,P}}\nonumber\\
&=&\left(u_\Gamma^T\Rtilde_\Gamma^T\Stilde_{\Gamma,P} \Rtilde_\Gamma u_\Gamma\right)^{1/2}\left(w_\Gamma^T\Rtilde_{D,\Gamma}^T\Stilde_{\Gamma,P}^{-1}\Stilde_{\Gamma,P}\Stilde_{\Gamma,P}^{-1}\Rtilde_{D,\Gamma}w_\Gamma\right)^{1/2}\nonumber\\
&=&\left(u_\Gamma^T\Rtilde_\Gamma^T\Stilde_{\Gamma,P} \Rtilde_\Gamma u_\Gamma\right)^{1/2}\left(u_\Gamma^TMu_\Gamma\right)^{1/2}.
\end{eqnarray}
{Squaring both sides and cancelling the common factor, we have
$$u_\Gamma^TMu_\Gamma\le u_\Gamma^T\Rtilde_\Gamma^T\Stilde_{\Gamma,P} \Rtilde_\Gamma u_\Gamma.$$}
Therefore,
{
\begin{align*}
&P(\{\theta:u^T_\Gamma Mu_\Gamma>C_{PF}u^T_\Gamma Fu_\Gamma\})\\
&\leq P(\{\theta:u_\Gamma^T\Rtilde_\Gamma^T\Stilde_{\Gamma,P} \Rtilde_\Gamma u_\Gamma> C_{PF}u^T_\Gamma Fu_\Gamma\})\\
&=P(\{\theta:u_\Gamma^T\Rtilde_\Gamma^T\Stilde_{\Gamma,P} \Rtilde_\Gamma u_\Gamma> C_{PF}u_\Gamma^T\Rtilde_\Gamma^T\Stilde_{\Gamma,F} \Rtilde_\Gamma u_\Gamma\}),
\end{align*}
which approaches 0, as $M_{KL},M^{(i)}_{KL},d$ go to infinity, by Lemma \ref{lemma:PF}.
Then we have 
\begin{align*}
\lim\limits_{M_{KL},M^{(i)}_{KL}\rightarrow\infty}\lim\limits_{d\rightarrow\infty} P(\{\theta:\frac 1{C_{PF}}u^T_\Gamma Mu_\Gamma\leq u^T_\Gamma Fu_\Gamma\})=1.
\end{align*}
}

{\bf Upper bound:}
Using the definition of $w_\Gamma$ in \EQ{wgamma}, the Cauchy-Schwarz inequality, and Lemma \LA{Ed}, we obtain the upper 
bound:
\begin{eqnarray}
&&u_\Gamma^TF u_\Gamma
=u_\Gamma^T\Rtilde_\Gamma^T\Stilde_{\Gamma,F}\Rtilde_\Gamma\Rtilde_{D,\Gamma}^T\Stilde_{\Gamma,F}^{-1}\Rtilde_{D,\Gamma}w_\Gamma\nonumber\\
&=&\left<\Rtilde_{\Gamma}u_\Gamma,E_{D}\Stilde_{\Gamma,F}^{-1}\Rtilde_{D,\Gamma}w_\Gamma\right>_{\Stilde_{\Gamma,F}}\nonumber\\
&\le&\left<\Rtilde_{\Gamma}u_\Gamma,\Rtilde_{\Gamma}u_\Gamma\right>^{1/2}_{\Stilde_{\Gamma,F}}\left<E_{D}\Stilde_{\Gamma,F}^{-1}\Rtilde_{D,\Gamma}w_\Gamma,E_{D}\Stilde_{\Gamma,F}^{-1}\Rtilde_{D,\Gamma}w_\Gamma\right>^{1/2}_{\Stilde_{\Gamma,F}}\nonumber\\
&=&(u_\Gamma^TF u_\Gamma)^{\frac12}\left<E_{D}\Stilde_{\Gamma,F}^{-1}\Rtilde_{D,\Gamma}w_\Gamma,E_{D}\Stilde_{\Gamma,F}^{-1}\Rtilde_{D,\Gamma}w_\Gamma\right>^{1/2}_{\Stilde_{\Gamma,F}}.
\end{eqnarray}
{Squaring both sides and cancelling the common factor, we have
  \begin{equation}\label{equation:upper}
      u_\Gamma^TF u_\Gamma\le \|E_D\Stilde_{\Gamma,F}^{-1}\Rtilde_{D,\Gamma}w_\Gamma\|^2_{\widetilde{S}_{\Gamma,F}}.\end{equation}}
Therefore, similar as the proof in Lemma \ref{lemma:PF}, we define
\begin{align*}
  A&=\{\theta:\|E_D\Stilde_{\Gamma,F}^{-1}\Rtilde_{D,\Gamma}w_\Gamma\|^2_{\widetilde{S}_{\Gamma,F}}\leq C^2_{ED,F}{\|\Stilde_{\Gamma,F}^{-1}\Rtilde_{D,\Gamma}w_\Gamma\|}^2_{\Stilde_{\Gamma,F}}\},\\
  B&=\{\theta:{\|\Stilde_{\Gamma,F}^{-1}\Rtilde_{D,\Gamma}w_\Gamma\|}^2_{\Stilde_{\Gamma,F}}\leq C_{PF}u_{\Gamma}^TMu_{\Gamma}\},\\
 C&=\{\theta:\|E_D\Stilde_{\Gamma,F}^{-1}\Rtilde_{D,\Gamma}w_\Gamma\|^2_{\widetilde{S}_{\Gamma,F}}\leq
    C^2_{ED,F}C_{PF}u_{\Gamma}^TMu_{\Gamma}\}.
\end{align*}
We have $A\bigcap B\subset C$ and  then $C^c\subset A^c\bigcup B^c$. 
Using \EQ{upper}, we have 
\begin{align*}
&P(\{\theta:u^T_\Gamma Fu_\Gamma>C^2_{ED,F}C_{PF}u_{\Gamma}^TMu_{\Gamma}\})\\
&\leq P(\{\theta:\|E_D\Stilde_{\Gamma,F}^{-1}\Rtilde_{D,\Gamma}w_\Gamma\|^2_{\widetilde{S}_{\Gamma,F}}> C^2_{ED,F}C_{PF}u_{\Gamma}^TMu_{\Gamma}\})\\
&\leq P(\{\theta:{\|E_D\Stilde_{\Gamma,F}^{-1}\Rtilde_{D,\Gamma}w_\Gamma\|^2_{\widetilde{S}_{\Gamma,F}}> C^2_{ED,F}{\|\Stilde_{\Gamma,F}^{-1}\Rtilde_{D,\Gamma}w_\Gamma\|}^2_{\Stilde_{\Gamma,F}}}\})\\
&\quad+P(\{\theta:{\|\Stilde_{\Gamma,F}^{-1}\Rtilde_{D,\Gamma}w_\Gamma\|}^2_{\Stilde_{\Gamma,F}}> C_{PF}u_{\Gamma}^TMu_{\Gamma}\}).
\end{align*}
The first term goes to 0, as $M_{KL},M^{(i)}_{KL},d$ approach infinity by Lemma \ref{lemma:Ed}. For the second term, since 
\begin{align*}
  &{\|\Stilde_{\Gamma,F}^{-1}\Rtilde_{D,\Gamma}w_\Gamma\|}^2_{\Stilde_{\Gamma,F}}=w_\Gamma^T\Rtilde_{D,\Gamma}^T\Stilde_{\Gamma,F}^{-1}\Stilde_{\Gamma,F}\Stilde_{\Gamma,F}^{-1}\Rtilde_{D,\Gamma}w_\Gamma=u_\Gamma^TM\left(\Rtilde_{D,\Gamma}^T\Stilde_{\Gamma,F}^{-1}\Rtilde_{D,\Gamma}\right)Mu_\Gamma\\
 & \mbox{and}\\
&u_\Gamma^TM\left(\Rtilde_{D,\Gamma}^T\Stilde_{\Gamma,P}^{-1}\Rtilde_{D,\Gamma}\right)Mu_\Gamma=u^T_\Gamma M
u_\Gamma,
\end{align*}
we have
\begin{align*}
&P(\{\theta:{\|\Stilde_{\Gamma,F}^{-1}\Rtilde_{D,\Gamma}w_\Gamma\|}^2_{\Stilde_{\Gamma,F}}> C_{PF}u_{\Gamma}^TMu_{\Gamma}\})\\
&=P(\{\theta:u_\Gamma^TM\left(\Rtilde_{D,\Gamma}^T\Stilde_{\Gamma,F}^{-1}\Rtilde_{D,\Gamma}\right)Mu_\Gamma>
u_\Gamma^TM\left(\Rtilde_{D,\Gamma}^T\Stilde_{\Gamma,P}^{-1}\Rtilde_{D,\Gamma}\right)Mu_\Gamma\})\\
&=P\left(\left\{\theta:\left(\Rtilde_{D,\Gamma} Mu_\Gamma\right)^T\Stilde_{\Gamma,F}^{-1}\left(\Rtilde_{D,\Gamma}Mu_\Gamma\right)>
\left(\Rtilde_{D,\Gamma} Mu_\Gamma\right)^T\Stilde_{\Gamma,P}^{-1}\left(\Rtilde_{D,\Gamma}Mu_\Gamma\right)\right\}\right),
\end{align*}
which goes to 0, as $M_{KL},M^{(i)}_{KL},d$ go to infinity by  Lemma \ref{lemma:PF}.
This implies that 
\begin{align*}
\lim\limits_{M_{KL},M^{(i)}_{KL}}\lim\limits_{d\rightarrow\infty}P(\{\theta:u^T_\Gamma Fu_\Gamma\leq C^2_{ED,F}C_{PF}u_{\Gamma}^TMu_{\Gamma}\})=1.
\end{align*}
\end{proof}
\section{Numerical Experiments}

In this section, we study the performance of our stochastic BDDC
preconditioners and {{compare it}} with the deterministic  exact BDDC
preconditioners and the mean-based BDDC preconditioners.  
Our computational domain
{$\Omega=[0,1]\times [0,1]$} is first decomposed to $N=N_s\times N_s$
subdomains. Each subdomain is further splited to $2n\times n$ uniform
triangles.  We denote $H=\frac{1}{N_s}$ and $h=\frac{H}{n}$. In our
experiments, 
$f={2\pi^2\sin(\pi x)\sin(\pi y)}$ and \EQ{pde1} is discretized by  piecewise
linear finite elements in space and 
the discretization of the coefficient $\kappa$ is a piecewise 
constant approximation.  

{ In our numerical experiments, the coefficient of
  \EQ{pde1} $\kappa (\x,
  \theta)=\exp(a(\x,\theta))$, where $a(\x,\theta)\sim N(0,C)$ and $C$
  is defined in \EQ{cov}. We first do a global KL
  truncation to obtain the global approximation $a_{M_{KL}}$ defined in
  \EQ{globalKL}.  A single-level Monte Carlo sampling method is used.
  We generate  samples of $\xi_1, \cdots, \xi_{M_{KL}}$ in
  \EQ{globalKL} and obtain the samples of $a_{M_{KL}}$.  The  Schur complement \EQ{globalschur} resulting from
\EQ{pde1} with $\kappa(\x,\theta)\approx a_{M_{KL}}$ is solved by preconditioned conjugate gradient methods (PCG)
with different BDDC preconditioners.  The solutions provide samples of
the solutions of \EQ{pde1}.   The CG is stopped when the $l_2$
norm of the relative residual has been reduced by a factor of $10^{-8}$ {{or when it 
reaches the maximum iteration number $100$}}.}

{In those
  experiments where our stochastic BDDC components are  only used as a preconditioner, the
  Schur complements are obtained from \EQ{pde1} using $a_{M_{KL}}$,  and
  therefore the solutions are good approximation of those of
  \EQ{pde1} with {{a large}} enough $M_{KL}$.   We also apply
our stochastic BDDC preconditioner  {to the approximate}  Schur
complement $S_{\Gamma,F}$, which {{was}} studied  in \cite{Contreras2018B} without
any preconditioners.  The average relative $L_2$ errors, compared with the solution
of \EQ{pde1}, are provided in Tables \ref{Tab:inexactFcN} to 
\ref{Tab:inexactFcPL}. }

{We note that, for the stochastic BDDC algorithms, the
  expensive constructions are performed in the offline stage, {{most of which}} are local to each subdomain and can be computed in parallel. During
  the online stage, the constructions  of the local components of the
  algorithms are local to each subdomain and can be computed in
  parallel. The coarse problem, the only global component in the
  algorithms, is constructed similarly to those  in the
deterministic BDDC algorithms as long as the subdomain local
components are available, which are mainly constructed in the {{offline}}
stage for the stochastic BDDC algorithms.  Therefore, the only possible difference {{in}}
the weak scalability between the stochastic and deterministic BDDC
algorithms is the  CG iterations, since the stochastic BDDC
preconditioner is  an approximation.  The weak scalability of the
deterministic BDDC algorithms {{has}} been tested in many different
applications, such as \cite{SSM2013,Zampini2016,ZT:2016:Darcy,HSB2020,VFEBDDC2022}. To
check the weak scalability of our stochastic BDDC algorithms, we 
present the condition numbers and/or CG iterations in our numerical
experiments.}

{In all experiments,  the subdomain
vertex {{constraints}} are chosen, {{and}} a simple  $\rho$-scaling is used.
For  the problems with high contrast coefficients  inside the subdomains, we will
need more advanced  adaptive primal constraints, which are constructed
by solving local eigenvalue problems,  and the deluxe scaling \cite{ZT:2016:Darcy,KKR2020,ZTBrinkman2022,STX2021}. 
We will work on constructing these additional primal constraints using
our stochastic approximations of the local matrices in our
future work. 
}
{{We have four sets of {{ numerical experiments}}. In each, we draw $100$ samples
and report the average number of CG iterations.}}  We also use the same number
of subdomain local KL terms in each subdomain. We will denote
``Exact'' {{as}} the exact BDDC preconditioner, ``MPC'' {{as}} the
mean-valued BDDC preconditioner, and ``Stochastic'' {{as}} our proposed
stochastic BDDC preconditioner.  In the first three sets, we show the
results with both SG and SC constructions. These two constructions
provide similar results.
In our fourth set for the inexact
Schur complement, we only show the results for the SG {{construction}}
since the SC results are very similar. 

{\bf Set I:} In our first set  of the experiments, we take $l=1$ and $\sigma^2=0.5$
in \EQ{cov} and the coefficient $\kappa$ is smooth. {We keep the first
$4$ 
global KL terms and 
$M_{KL}=4$. The percentage of the first four eigenvalues over all
eigenvalues is $98.2\%$. } We report both the
condition numbers of the preconditioned systems and the number of the PCG iterations.
As {{with}} standard BDDC algorithms, we test the
performance of different BDDC algorithms {{by}} increasing 
{the number of subdomains} and the subdomain local problem sizes.  We
first fix $\frac{H}{h}=8$ and the local {{PC}} degree
$d=4$.  We change the number of subdomains $N_s$ and the subdomain local KL
term $N_{KL}$.   The results are reported in Table
\ref{Tab:ex1-1}.  For the ``Exact'' preconditioners, the condition
numbers and the number of PCG iterations are independent of $N_s$, as
we expect from the BDDC theory \cite{Widlund:2020:BDDC}. For the
``MPC'' preconditioners,  the condition numbers and the number of the
CG iterations {{both increases as $N_s$ increases}}.  For
our stochastic BDDC preconditioners, the condition numbers and the CG
iterations {{increase slightly as $N_s$ increases}} for
all choices of $N_{KL}$.  However, {{these numbers are much closer to those obtained with the
exact BDDC preconditioners than to those for ``MPC''. }} {{We note
that as $N_s$ increases, the size of the subdomain {{becomes}}
smaller, and the percentage of  the largest eigenvalue $\lambda_1^{(i)}$ in
\EQ{LML} over all eigenvalues rises from $98.0\%$ to $99.9\%$. Consequently,
{{a}} small number of the KL terms can provide {{an}} accurate approximation.}}
As a result, for
$N_s=16\times 16$, the condition numbers and the CG iterations are
almost identical for $N_{KL}=1,2,3$.  We found that, for large $N_s$,
the SG construction provides better results than the SC construction. For deterministic BDDC
algorithms, the scalability (the CG iterations are independent of
$N_s$) is important {{for parallel computation}}.
Since one
subdomain is {usually} assigned to one processor, 
the scalability means that the CG iterations are independent of the
number of the processors.  When we run larger problems, we can use
more processors with {a} similar number of the CG {{iterations.}}
 {{For the stochastic BDDC algorithms, a good approximation requires a larger number of subdomains, a smaller subdomain problem size, and fewer subdomain local KL terms.}} However, with {{larger}} $N_s$, the size of the Schur
complement {becomes} larger, {{and}} the cost of the preconditioners in \cite{stochasticpRMCM2021}
increases considerably. In our stochastic BDDC algorithms, we {{do not}}
form the global  approximation of the Schur complements. {{Only the size of the coarse problem, which is much smaller than the global Schur complement, increases with increasing Ns.}}
When the
number of processors/subdomains is very large,
the coarse problem might become a bottle-neck and three or multilevel
BDDC algorithms can be used to remove the bottle-neck
\cite{Tu:2004:TLB,Tu:2005:TLB,Tu:2008:TBS,ZT:2016:Darcy}.
In Figure
1, we plot the number of the CG iterations for  ``MPC'',
stochastic, and exact BDDC preconditioners for $N_s=256$ with 100 samples.

\begin{figure}[h]
\begin{center}
\caption{\label{figure:samples} The CG iterations for $100$ samples
  with  $N_s=256$, $\frac{H}{h}=8$, $N_{KL}=1$, $d=4$, $l=1$,
$\sigma^2=0.5$..}
\includegraphics[scale=0.3]{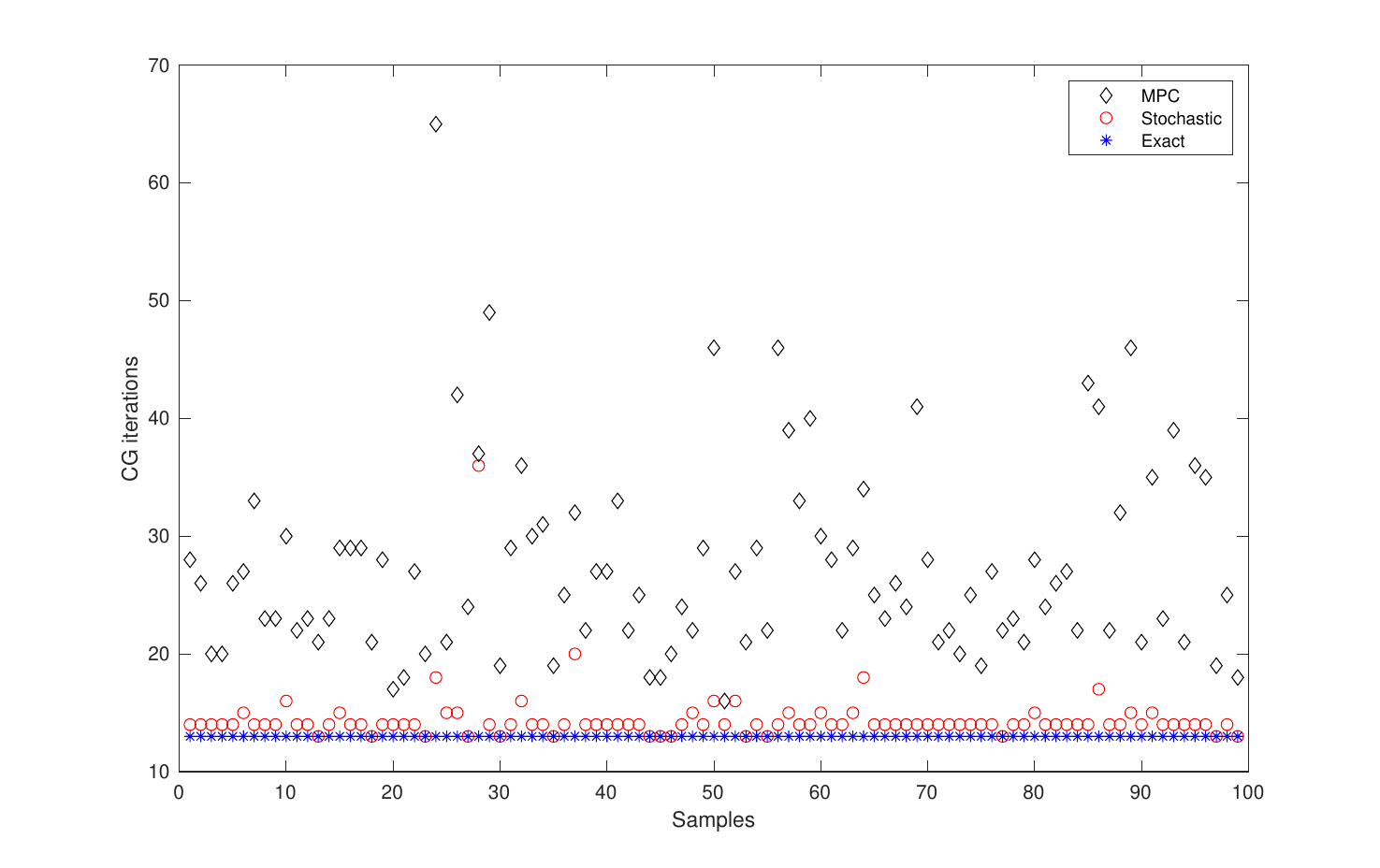}
\end{center}
\end{figure}

To further improve the performance of our stochastic BDDC algorithms, we keep $N_{KL}=1$ and increase the PC polynomial
degree $d$ from $4$ to $6$, the results are reported in Table
\ref{Tab:exp-1}. {{As $d$ increases}},  the condition numbers
and the CG iterations of our stochastic BDDC {algorithms} further decrease to {{be}}
almost identical to those with the exact BDDC preconditioners.  The
stochastic BDDC algorithms are scalable for $d\ge 5$.

In Tables \ref{Tab:ex1-2} and \ref{Tab:exp-2}, we study the performance
of different BDDC preconditioners with  a change of the subdomain
problem size $\frac{H}{h}$. The number of the subdomains is fixed {{at}}
$64$ and the percentage of  the largest eigenvalue $\lambda_1^{(i)}$ in
\EQ{LML} over all eigenvalues is $99.5\%$. One KL term provides a good
approximation of the local coefficient.  By the BDDC theory
\cite{Widlund:2020:BDDC}, the {{number}} of CG iterations increase slowly
{{as $\frac{H}{h}$ increases}}.  {{In contrast to Table
\ref{Tab:ex1-1}}}, we can observe the decreases of both condition number
and the CG iterations {{as $N_{KL}$} increases} in Table
\ref{Tab:ex1-2}.   Similar to Table \ref{Tab:exp-1}, the performance
of the stochastic BDDC algorithms {{is also improved by increasing d.}} 

All these results are consistent with our theorem.

{\bf Set II:} In our second set of {{numerical}} experiments, we change $\sigma^2$ in
\EQ{cov}. The increase of $\sigma^2$ does not affect the global and local KL
{{expansions}}, but {{increases}} the variability of $\kappa$. We note that
for ``MPC'', the CG iterations might not converge in $100$ iterations for
some samples and
the condition number estimates are not meaningful anymore. Therefore, we only
report the number of CG iterations from now on. We report the results
in Table \ref{Tab:sig} for $64$ subdomains and $\frac{H}{h}=8$ with
$N_{KL}=3$ and $d=6$.  The performance of the Stochastic BDDC
algorithms {{is}} still close to the exact BDDC algorithms. But the MPC
{{deteriorates}} (recall that we set the maximum numbers of the iterations
to be $100$).

{\bf Set III:} In our third set of the numerical experiments, we further increase the
variability of the samples by decreasing $l$ in \EQ{cov}.  In this
set, $l=0.1$. {We keep the first
$15$ 
global KL terms and 
$M_{KL}=15$. The percentage of the first four eigenvalues over all
eigenvalues is $95.8\%$.} We repeat the numerical experiments in our first
set. {{
For small $l$ and 16 subdomains, the first two largest eigenvalues ($N_{KL}=2$) from \EQ{LML}  account for only $90.7\%$ of the total eigenvalues. This percentage increases to $99.3\%$ when $N_{KL}=4$.}} Therefore, in Table
\ref{Tab:exL1-1}, {{we can see the
decrease of the number of the CG iterations.}} For $N_s\ge
8$, $N_{KL}=3$ gives $99.9\%$, we cannot see the decease {{of the number of the
CG iterations}} from $N_{KL}=3$ to $N_{KL}=4$ anymore. 
{In Table
\ref{Tab:exLp-3}, we increase the polynomial degrees $d$
with $N_{KL}=3$.}  The performance of our
stochastic BDDC are very close to those of the exact BDDC.  For
$N_s=16$ and $d=5$, we found {{that}} the resulting $S_\Pi$ is not positive
definite for one sample. We use the rest sample results to calculate
the average iterations. The result
is marked with $*$, see Remark \ref{remark:Cspd} for more details.

{
We also plot the average Frobenius norm of the difference between the
exact $S_{\Pi}$ and our stochastic approximation $S_{\Pi,PC_d}$ in
Figure \ref{fig:ScErr}.  These errors are comparable with those
obtained in \cite{Contreras2018B} for the Schur complement error.  The top figure in Figure \ref{fig:ScErr}
shows that the error decreases with the increasing of the number of the local KL
terms $N_{KL}$. {{The
change of the error becomes smaller when $N_{KL}$ increases from $3$ to $4$ compared to the change}}
from $2$ to $3$. This is consistent with the iteration changes in the
last row of Table \ref{Tab:exL1-1}. The bottom  figure in Figure \ref{fig:ScErr}
shows that the error decreases {{as the PC
polynomial degree $d$ increases. The
change of the error becomes smaller when $d$ increases from $5$ to $6$ compared to the change
from $4$ to $5$.}} This is also consistent with the iteration changes in the
last row of Table \ref{Tab:exLp-3}.  Also when $d$ changes from $5$ to
$6$, the approximate coarse problem of that ``bad'' sample  changes from
indefinite to positive
definite. }

\begin{figure}[h]
\begin{center}
\caption{\label{fig:ScErr} The $\|S_{\Pi}-S_{\Pi,PC_d}\|_F$ for
  the mean of $100$ samples
  with  $N_s=256$, $\frac{H}{h}=8$, $l=0.1$,
$\sigma^2=0.5$. The top figure is for $N_{KL}=2,3,4$ and $d=4$
(corresponding to 
the last row in  Table \ref{Tab:exL1-1}); the bottom
figure is for $d=3,4,5$ and $N_{KL}=3$ (corresponding to 
the last row in  Table \ref{Tab:exLp-3}).  }
\includegraphics[scale=0.5]{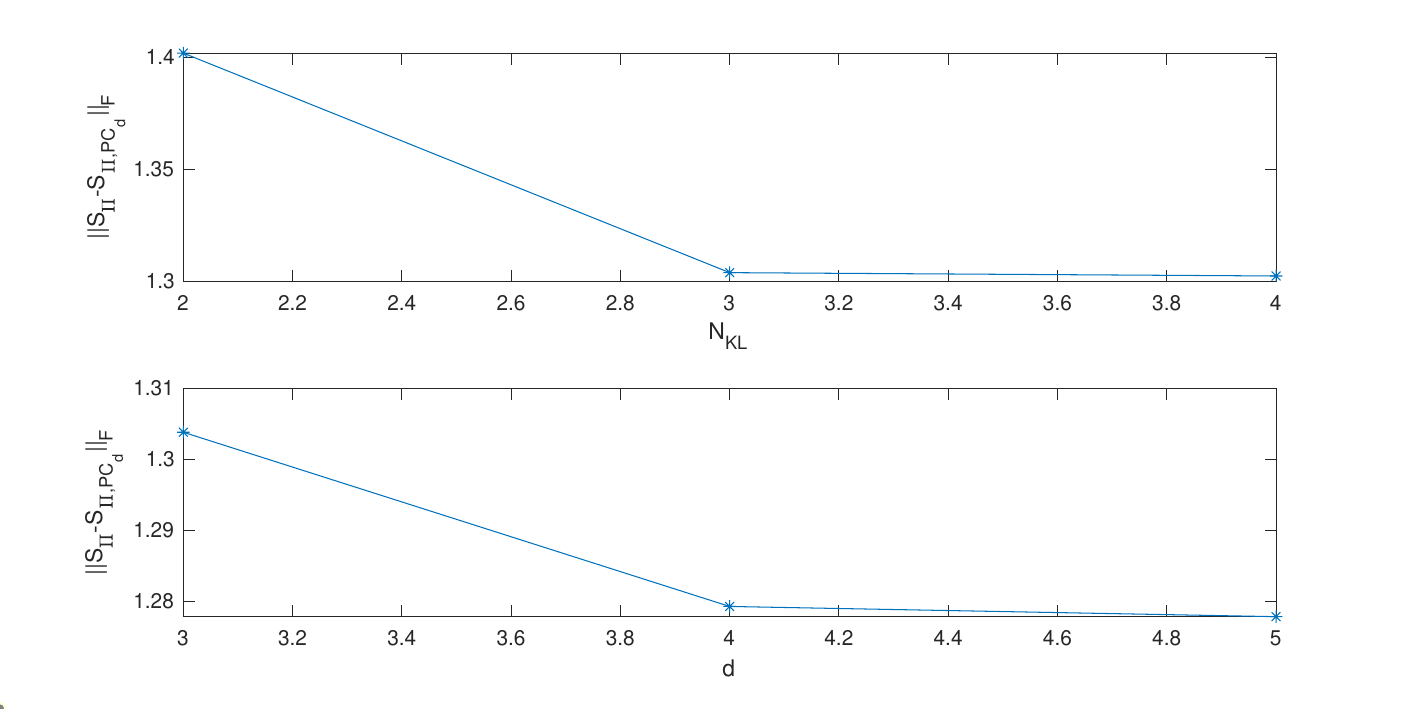}
\end{center}
\end{figure}

In Tables \ref{Tab:exL1-2} and \ref{Tab:exLp-4}, we
study the {{performance}} of those algorithms with a change of the subdomain
problem size $\frac{H}{h}$. We can observe similar performance {{in Tables \ref{Tab:exL1-1}, \ref{Tab:exLp-3}.}}

{\bf Set IV:}   In our fourth set of the numerical experiments, we
test the performance of our stochastic BDDC {algorithms} to precondition
the approximate Schur complement  $S_{\Gamma,F}$.  We fix the number of the
subdomains {{as}} $64$ and change the values of the KL terms $N_{KL}$ and the
polynomial degree $d$ for $l=1$ and $l=0.1$. The results are presented
in Tables \ref{Tab:inexactFcN} to \ref{Tab:inexactFcPL}.  {{The results
show that our stochastic BDDC algorithms work well with the
approximate Schur complement.}} Here we only present the results
with the SG construction{{,}} since the SC construction gives similar
results.  Moreover, we also provide the average $L_2$ error of the
solutions obtained using the approximated $S_{\Gamma,F}$. 

\begin{table}
\caption{Performance of Stochastic BDDC preconditioner  for a change
  of the number of the subdomains and $N_{KL}$ with
  $\frac{H}{h}=8$, $\sigma^2=0.5$, $l=1$, $d=4$. }\label{Tab:ex1-1}
\begin{center}
\begin{tabular}{|c|cc||cc|cc|cc||cc|}\hline\hline
&\multicolumn{2}{|c||}{MPC}&\multicolumn{2}{|c|}{ $N_{KL}=1$}&\multicolumn{2}{|c|}{ $N_{KL}=2$}
                                                  & \multicolumn{2}{c||}{ $N_{KL}=3$} & \multicolumn{2}{c|}{ Exact}\\ \hline
$N_s$ & Cond. & Iter. & Cond. & Iter.& Cond. & Iter. & Cond. & Iter.   & Cond. & Iter.  \\ \hline
&\multicolumn{2}{|c||}{}&\multicolumn{6}{|c||}{ with the SC construction}& \multicolumn{2}{|c|}{ }\\ \hline
4  &5.01&18.25&2.54&11.98&2.45& 11.73 &2.29&10.45  &2.22&10.51 \\
8   &7.12& 23.58&2.70&13.55&2.66& 13.43&2.62&13.20 &2.45&12.29\\
 12  & 7.97&25.99&2.89&14.10 &2.89&13.99 &2.85&13.85& 2.50&13.00 \\
16 &8.37& 27.48&3.15&14.44&3.15& 14.37&3.15&14.30&2.49 & 13.00\\\hline
&\multicolumn{2}{|c||}{}&\multicolumn{6}{|c||}{ with the SG construction}& \multicolumn{2}{|c|}{ }\\ \hline
4  &5.01&18.25&2.72&13.18&2.62& 12.51 &2.42&10.73  &2.22&10.51 \\
8  &7.12& 23.58 &2.65&12.85&2.61& 12.39&2.54&11.36 &2.45&12.29\\
12 & 7.97&25.99&2.63&12.66 &2.61&12.31 &2.58&11.48& 2.50&13.00 \\
16 &8.37& 27.48 &2.63&12.58&2.61& 12.26&2.59&11.61&2.49 & 13.00\\
  \hline
\end{tabular}
\end{center}
\end{table}
\begin{table}
\caption{Performance of Stochastic BDDC preconditioner  for  for a change
  of the number of the subdomains and $d$ with
  $\frac{H}{h}=8$, $\sigma^2=0.5$, $l=1$, $N_{KL}=1$. }\label{Tab:exp-1}
\begin{center}
\begin{tabular}{|c|cc||cc|cc|cc||cc|}\hline\hline
&\multicolumn{2}{|c||}{MPC}&\multicolumn{2}{|c|}{ $d=4$}&\multicolumn{2}{|c|}{ $d=5$}
                                                    &\multicolumn{2}{c||}{ $d=6$}& \multicolumn{2}{|c|}{ Exact}\\ \hline
$N_s$ & Cond. & Iter.& Cond. & Iter.  & Cond. & Iter. & Cond. & Iter.   & Cond. & Iter.  \\ \hline
&\multicolumn{2}{|c||}{}&\multicolumn{6}{|c||}{ with the SC construction}& \multicolumn{2}{|c|}{ }\\ \hline
 4  &5.01&18.25&2.54&11.98&2.53& 11.95 &2.53&11.95  &2.22&10.51 \\
8   &7.12& 23.58&2.70&13.55&2.59& 13.30&2.57&13.25 &2.45&12.29\\
12  & 7.97&25.99&2.89&14.10 &2.62&13.45 &2.56&13.34& 2.50&13.00 \\
16 &8.37& 27.48&3.15&14.44&2.65& 13.61&2.56&13.29&2.49 & 13.00\\
  \hline
  &\multicolumn{2}{|c||}{}&\multicolumn{6}{|c||}{ with the SG construction}& \multicolumn{2}{|c|}{ }\\ \hline
4  &5.01&18.25&2.72&13.18&2.72& 13.16 &2.72&13.17  &2.22&10.51  \\
8   &7.12& 23.58&2.65&12.85&2.63& 12.72&2.63&12.70 &2.45&12.29\\
12  & 7.97&25.99&2.63&12.66&2.59&12.35 &2.59&12.33& 2.50&13.00 \\
  16 &8.37& 27.48&2.63&12.58&2.57& 12.08&2.57&12.07&2.49 & 13.00\\\hline
\end{tabular}
\end{center}
\end{table}

\begin{table}
\caption{Performance of Stochastic BDDC preconditioner   for a change
  of the subdomain local problem size $\frac{H}{h}$ and {$N_{KL}$} with $64$
  subdomains, $\sigma^2=0.5$, $l=1$, $d=4$. }\label{Tab:ex1-2}
\begin{center}
\begin{tabular}{|c|cc||cc|cc|cc||cc|}\hline\hline
&\multicolumn{2}{|c||}{MPC}&\multicolumn{2}{|c|}{ $N_{KL}=1$}&\multicolumn{2}{|c|}{ $N_{KL}=2$}
                                                    &\multicolumn{2}{|c||}{ $N_{KL}=3$} & \multicolumn{2}{|c|}{ Exact}\\ \hline
$\frac{H}{h}$ &Cond. & Iter.& Cond. & Iter.  & Cond. & Iter. & Cond. & Iter.   & Cond. & Iter.  \\ \hline
&\multicolumn{2}{|c||}{}&\multicolumn{6}{|c||}{ with the SC construction}& \multicolumn{2}{|c|}{ }\\ \hline
4  &5.18&19.94 &1.95&10.83&1.94&10.73 &1.92&10.54 &1.78&9.93 \\
8   &7.12& 23.58&2.70&13.55&2.66& 13.43&2.62&13.20 &2.45&12.29\\
12 &8.47&25.73 &3.23&15.32&3.22&15.32 &3.16&14.90&2.92 &13.48 \\
16 &9.53&27.29&3.64&16.41&3.61& 16.13&3.56&15.70& 3.28& 14.98\\
 \hline
&\multicolumn{2}{|c||}{}&\multicolumn{6}{|c||}{ with the SG construction}& \multicolumn{2}{|c|}{ }\\ \hline
4 &5.18&19.94 &1.90&10.30&1.89&10.1 &1.85&9.19 &1.78&9.93\\
8  &7.12& 23.58&2.65&12.85&2.61& 12.39&2.54&11.36 &2.45&12.29\\
12 &8.47&25.73 &3.15&14.44&3.12&14.95 &3.03&12.95&2.92 &13.48 \\
  16 &9.53&27.29&3.55&15.62&3.49& 15.03&3.40&13.84& 3.28& 14.98 \\
  \hline
\end{tabular}
\end{center}
\end{table}

\begin{table}
\caption{Performance of Stochastic BDDC preconditioner   for a change
  of the subdomain local problem size $\frac{H}{h}$ and $d$  with $64$ subdomains, $\sigma^2=0.5$, $l=1$, $N_{KL}=1$ . }\label{Tab:exp-2}
\begin{center}
\begin{tabular}{|c|cc||cc|cc|cc||cc|}\hline\hline
&\multicolumn{2}{|c||}{MPC}&\multicolumn{2}{|c|}{ $d=4$}&\multicolumn{2}{|c|}{ $d=5$}
                                                    &\multicolumn{2}{|c||}{ $d=6$} & \multicolumn{2}{|c|}{ Exact}\\ \hline
$\frac{H}{h}$ & Cond. & Iter. & Cond. & Iter.  & Cond. & Iter. & Cond. & Iter.   & Cond. & Iter.  \\ \hline
&\multicolumn{2}{|c||}{}&\multicolumn{6}{|c||}{ with the SC construction}& \multicolumn{2}{|c|}{ }\\ \hline
 4  &5.18&19.94 &1.95&10.83&1.87&10.42  &1.85&10.37 &1.78&9.93 \\
8   &7.12& 23.58&2.70&13.55&2.59& 13.30&2.57&13.25 &2.45&12.29\\
12 &8.47&25.73 &3.23&15.32&3.09&15.07 &3.06&14.95&2.92 &13.48 \\
16 &9.53&27.29&3.64&16.41&3.48& 15.91&3.44&15.77& 3.28& 14.98\\
 \hline
&\multicolumn{2}{|c||}{}&\multicolumn{6}{|c||}{ with the SG construction}& \multicolumn{2}{|c|}{ }\\ \hline
  4  &5.18&19.94  &1.90&10.30&1.88&10.22  &1.88&10.21 &1.78&9.93  \\
8   &7.12& 23.58&2.70&13.55&2.63& 12.72&2.63&12.70 &2.45&12.29\\
12 &8.47&25.73 &3.15&14.44&3.12&14.32&3.12&14.32&2.92 &13.48\\
16 &9.53&27.29&3.55&15.62&3.51& 15.49&3.51&15.48& 3.28& 14.98\\
  \hline
\end{tabular}
\end{center}
\end{table}


\begin{table}
\caption{Performance of Stochastic BDDC preconditioner  for a change
  of $\sigma^2$ with $64$
  subdomains, $\frac{H}{h}=8$, $l=1$, $N_{KL}=3$, $d=6$ . }\label{Tab:sig}
\begin{center}
\begin{tabular}{|c|c|c|c|c|}\hline\hline
$\sigma^2$ & MPC Iter. &  Stochastic Iter. (SC) &  Stochastic Iter. (SG)  & Exact  Iter.  \\ \hline
 0.2  &19.21 &12.07&10.08&12.05 \\
0.5  & 23.58&12.57&10.46&12.29\\
1&30.01 &14.13&12.05&12.58\\
 \hline
\end{tabular}
\end{center}
\end{table}


\begin{table}
\caption{Performance of Stochastic BDDC preconditioner  for a change
  of the number of the subdomain $N_s$ and $N_{KL}$ with 
  $\frac{H}{h}=8$, $\sigma^2=0.5$, $l=0.1$, $d=4$. }\label{Tab:exL1-1}
\begin{center}
\begin{tabular}{|c|c||c|c|c||c|}\hline\hline
&\multicolumn{1}{|c||}{MPC}&\multicolumn{1}{|c|}{ $N_{KL}=2$}
                                                    &\multicolumn{1}{c|}{ $N_{KL}=3$} & \multicolumn{1}{c||}{ $N_{KL}=4$} & \multicolumn{1}{c|}{ Exact}\\ \hline
$N_s$ & Iter. & Iter. & Iter. & Iter. & Iter.  \\ \hline
&&\multicolumn{3}{|c||}{ with the SC construction}&\\ \hline
 4  &30.84& 15.75 &12.51 &12.00&11.12 \\
8   &39.71& 16.43&14.13 &14.17&13.02\\
12  &44.63&16.51&14.89& 14.89&13.01 \\
16 &47.99& 16.42&15.63&15.63& 13.00\\\hline
&&\multicolumn{3}{|c||}{ with the SG construction}&\\ \hline
 4  &30.84& 17.47&13.67 &13.18& 11.12\\
8   &39.71& 16.37& 13.32&13.15&13.02\\
12 &44.63&15.88&13.06& 13.01&13.01 \\
16&47.99&14.87 &13.07&13.08& 13.00\\
  \hline
\end{tabular}
\end{center}
\end{table}

\begin{table}
\caption{Performance of Stochastic BDDC preconditioner  for  a change
  of the number of the subdomain $N_s$ and $d$ with 
  $\frac{H}{h}=8$, $\sigma^2=0.5$, $l=0.1$, $N_{KL}=3$. }\label{Tab:exLp-3}
\begin{center}
\begin{tabular}{|c|c||c|c|c||c|}\hline\hline
&\multicolumn{1}{|c||}{MPC}&\multicolumn{1}{|c|}{ $d=4$}
                                                   &\multicolumn{1}{c|}{
                                                     $d=5$} &
                                                              \multicolumn{1}{c||}{ $d=6$} &
                                                                                             \multicolumn{1}{c|}{ Exact}\\ \hline
$N_s$ & Iter. & Iter. & Iter. & Iter. & Iter.  \\ \hline
&&\multicolumn{3}{|c||}{ with the SC construction}&\\ \hline
 4  &30.84& 12.51 &12.43 &12.40&11.12 \\
8   &39.71& 14.13&13.30&13.09&13.02\\
12  &44.63&14.89&13.57&13.08 &13.01 \\
16 &47.99& 15.62&13.85&13.11 & 13.00\\
   \hline
&&\multicolumn{3}{|c||}{ with the SG construction}&\\ \hline
4  &30.84& 13.67 &13.61&13.61&11.12 \\
8  &39.71& 13.32&12.86&12.75&13.02\\
12 &44.63&13.06&12.38&12.07 & 13.01\\
16 &47.99& 13.07&12.13(*)&11.86 &13.00 \\\hline

\end{tabular}
\end{center}
\end{table}

\begin{table}
\caption{Performance of Stochastic BDDC preconditioner   for a change
  of the subdomain local problem size $\frac{H}{h}$ and $N_{KL}$ with  $64$
  subdomains, $\sigma^2=0.5$, $l=0.1$, $d=4$. }\label{Tab:exL1-2}
\begin{center}
\begin{tabular}{|c|c||c|c|c||c|}\hline\hline
&\multicolumn{1}{|c||}{MPC}&\multicolumn{1}{|c|}{ $N_{KL}=2$}
                                                    &\multicolumn{1}{c|}{ $N_{KL}=3$} & \multicolumn{1}{c||}{ $N_{KL}=4$} & \multicolumn{1}{c|}{ Exact}\\ \hline
$\frac{H}{h}$ & Iter. & Iter. & Iter. & Iter. & Iter.  \\ \hline
&&\multicolumn{3}{|c||}{ with the SC construction}&\\ \hline
4  &35.36& 12.73 &11.32 &11.33&10.07 \\
8   &39.71& 16.43&14.13 &14.17&13.02\\
12  &42.31&18.70&15.80& 15.85&14.09\\
16 &44.29& 20.10&17.08& 17.06& 15.71\\
  \hline
&&\multicolumn{3}{|c||}{ with the SG construction}&\\ \hline
4  &35.36& 12.50&10.48&10.43&10.07\\
8  &39.71&16.37 &13.32&13.15&13.02\\
12  &42.31&18.67&15.02&14.82&14.09\\
16 &44.29& 19.89&16.01&15.83&15.71 \\\hline
\end{tabular}
\end{center}
\end{table}

\begin{table}
\caption{Performance of Stochastic BDDC preconditioner  for a change
  of the subdomain local problem size $\frac{H}{h}$ and $d$ with $64$ subdomains, $\sigma^2=0.5$, $l=0.1$, $N_{KL}=3$ . }\label{Tab:exLp-4}
\begin{center}
\begin{tabular}{|c|c||c|c|c||c|}\hline\hline
&\multicolumn{1}{|c||}{MPC}&\multicolumn{1}{|c|}{ $d=4$}
                                                    &\multicolumn{1}{c|}{ $d=5$} & \multicolumn{1}{c||}{ $d=6$} & \multicolumn{1}{c|}{ Exact}\\ \hline
$\frac{H}{h}$ & Iter. & Iter. & Iter. & Iter. & Iter.  \\ \hline
&&\multicolumn{3}{|c||}{ with the SC construction}&\\ \hline
 4  &35.36& 11.51 &10.57 &10.30&10.07 \\
8   &39.71& 14.13&13.30&13.09&13.02\\
12  &42.31&15.80&15.16&14.96 &14.09\\
16 &44.29& 17.08&16.23& 16..01& 15.71\\
  \hline
&&\multicolumn{3}{|c|}{ with the SG construction}&\\ \hline
4& 35.36&10.48&10.11& 10.04& 10.07\\
  8   &39.71&13.32&12.86&12.75 &13.02\\
  12  &42.31&15.02&14.53& 14.47&14.09\\
16&44.29&16.01&15.59&15.50 & 15.71\\\hline
\end{tabular}
\end{center}
\end{table}
\begin{table}
\caption{Performance of Stochastic BDDC preconditioned
  inexact Schur complement for a change
  of $N_{KL}$  with $64$
  subdomains, $\frac{H}{h}=8$, $l=1$, $d=4$ . }\label{Tab:inexactFcN}
\begin{center}
\begin{tabular}{|c|cc||cc|ccc|}\hline\hline
&\multicolumn{2}{|c||}{Exact}&\multicolumn{2}{|c|}{
                               SG}&\multicolumn{3}{|c|}{ Inexact SG}
                                                  \\ \hline
$N_{KL}$& Cond. & Iter. & Cond. & Iter. & Cond. & Iter.  &  Error\\ \hline
1 &2.45&12.29&2.65&12.85&2.57&12.04&1.18e-2\\
2  &&&2.61&12.39&2.57&11.68&1.15e-2\\
3   &&&2.54&11.36&2.57&11.72&1.10e-2\\
  \hline
\end{tabular}
\end{center}
\end{table}

\begin{table}
\caption{Performance of Stochastic BDDC preconditioned
  inexact Schur complement for a change
  of $p$  with with $64$
  subdomains, $\frac{H}{h}=8$, $l=1$, $N_{KL}=1$ . }\label{Tab:inexactFcP}
\begin{center}
\begin{tabular}{|c|cc||cc|ccc|}\hline\hline
&\multicolumn{2}{|c||}{Exact}&\multicolumn{2}{|c|}{
                               SG}&\multicolumn{3}{|c|}{ Inexact SG}
                                                  \\ \hline
$p$& Cond. & Iter. & Cond. & Iter. & Cond. & Iter.  &  Error\\ \hline
4 &2.45&12.29&2.65&12.85&2.57&12.04&1.18e-2\\
5  &&&2.63&12.72&2.51&11.59&5.35e-3\\
6   &&&2.63&12.70&2.51&11.45&3.82e-3\\
  \hline
\end{tabular}
\end{center}
\end{table}

\begin{table}
\caption{Performance of Stochastic BDDC preconditioned
  inexact Schur complement for a change
  of $N_{KL}$  with $64$
  subdomains, $\frac{H}{h}=8$, $l=0.1$, $d=4$ . }\label{Tab:inexactFcNL}
\begin{center}
\begin{tabular}{|c|cc||cc|ccc|}\hline\hline
&\multicolumn{2}{|c||}{Exact}&\multicolumn{2}{|c|}{
                               SG}&\multicolumn{3}{|c|}{ Inexact SG}
                                                  \\ \hline
$N_{KL}$& Cond. & Iter. & Cond. & Iter. & Cond. & Iter.  &  Error\\ \hline
2 &2.46&13.02&3.38&16.37&2.79&14.20&1.26e-2\\
3  &&       &2.68&13.32&2.76&13.84&8.60e-3 \\
4   &&      &2.67&13.15&2.74&13.70&8.58e-3\\
  \hline
\end{tabular}
\end{center}
\end{table}

\begin{table}
\caption{Performance of Stochastic BDDC preconditioned
  inexact Schur complement for a change
  of $p$  with $64$
  subdomains, $\frac{H}{h}=8$, $l=0.1$, $N_{KL}=3$ . }\label{Tab:inexactFcPL}
\begin{center}
\begin{tabular}{|c|cc||cc|ccc|}\hline\hline
&\multicolumn{2}{|c||}{Exact}&\multicolumn{2}{|c|}{
                               SG}&\multicolumn{3}{|c|}{ Inexact SG}
                                                  \\ \hline
$p$& Cond. & Iter. & Cond. & Iter. & Cond. & Iter.  &  Error\\ \hline
4 &2.46&13.02&2.68&13.32&2.76&13.82&6.40e-3\\
5                 &&&2.56&12.86&2.69&13.09&2.98e-3\\
  6      &&         &2.55&12.75&2.53&12.72&1.04e-3\\
  \hline
\end{tabular}
\end{center}
\end{table}

\bibliography{rs}

\begin{thebibliography}{10}

\bibitem{ali_multilevel_2017}
A.~A. Ali, E.~Ullmann, and M.~Hinze.
\newblock Multilevel monte carlo analysis for optimal control of elliptic
  {PDEs} with random coefficients.
\newblock {\em {SIAM}/{ASA} Journal on Uncertainty Quantification},
  5(1):466--492, 2017.

\bibitem{BNTcollocation2007}
I.~Babu\v{s}ka, F.~Nobile, and R.~Tempone.
\newblock A stochastic collocation method for elliptic partial differential
  equations with random input data.
\newblock {\em SIAM J. Numer. Anal.}, 45(3):1005--1034, 2007.

\bibitem{BNTcollocation2010}
I.~Babu\v{s}ka, F.~Nobile, and R.~Tempone.
\newblock A stochastic collocation method for elliptic partial differential
  equations with random input data.
\newblock {\em SIAM Rev.}, 52(2):317--355, 2010.

\bibitem{BCDM2017}
M.~Bachmayr, A.~Cohen, R.~DeVore, and G.~Migliorati.
\newblock Sparse polynomial approximation of parametric elliptic {PDE}s. {P}art
  {II}: {L}ognormal coefficients.
\newblock {\em ESAIM Math. Model. Numer. Anal.}, 51(1):341--363, 2017.

\bibitem{barth_multi-level_2011}
A.~Barth, C.~Schwab, and N.~Zollinger.
\newblock Multi-level monte carlo finite element method for elliptic {PDEs}
  with stochastic coefficients.
\newblock {\em Numer. Math.}, 119(1):123--161, 2011.

\bibitem{BLY2021}
A.~Bespalov, D.~Loghin, and R.~Youngnoi.
\newblock Truncation preconditioners for stochastic {G}alerkin finite element
  discretizations.
\newblock {\em SIAM J. Sci. Comput.}, 43(5):S92--S116, 2021.

\bibitem{Braess:1997:FET}
D.~Braess.
\newblock {\em Finite Elements: Theory, Fast Solvers, and Applications in Solid
  Mechanics}.
\newblock Cambridge University Press, Cambridge, 1997.

\bibitem{CPDE2012}
J.~Charrier.
\newblock Strong and weak error estimates for elliptic partial differential
  equations with random coefficients.
\newblock {\em SIAM J. Numer. Anal.}, 50(1):216--246, 2012.

\bibitem{CST2013}
J.~Charrier, R.~Scheichl, and A.~L. Teckentrup.
\newblock Finite element error analysis of elliptic {PDE}s with random
  coefficients and its application to multilevel {M}onte {C}arlo methods.
\newblock {\em SIAM J. Numer. Anal.}, 51(1):322--352, 2013.

\bibitem{ChenJGX2015}
Y.~Chen, J.~Jakeman, C.~Gittelson, and D.~Xiu.
\newblock Local polynomial chaos expansion for linear differential equations
  with high dimensional random inputs.
\newblock {\em SIAM J. Sci. Comput.}, 37(1):A79--A102, 2015.

\bibitem{Cho2015}
H.~Cho, X.~Yang, D.~Venturi, and G.~E. Karniadakis.
\newblock Algorithms for propagating uncertainty across heterogeneous domains.
\newblock {\em SIAM J. Sci. Comput.}, 37(6):A3030--A3054, 2015.

\bibitem{Cliffe2011}
K.~A. Cliffe, M.~B. Giles, R.~Scheichl, and A.~L. Teckentrup.
\newblock Multilevel {M}onte {C}arlo methods and applications to elliptic
  {PDE}s with random coefficients.
\newblock {\em Comput. Vis. Sci.}, 14(1):3--15, 2011.

\bibitem{Contreras2018A}
A.~A. Contreras, P.~Mycek, O.~P. Le~Ma\^{i}tre, B.~Rizzi, F.and~Debusschere,
  and O.~M. Knio.
\newblock Parallel domain decomposition strategies for stochastic elliptic
  equations. {P}art {A}: {L}ocal {K}arhunen-{L}o\`eve representations.
\newblock {\em SIAM J. Sci. Comput.}, 40(4):C520--C546, 2018.

\bibitem{Contreras2018B}
A.~A. Contreras, P.~Mycek, O.~P. Le~Ma\^{i}tre, F.~Rizzi, B.~Debusschere, and
  O.~M. Knio.
\newblock Parallel domain decomposition strategies for stochastic elliptic
  equations {P}art {B}: {A}ccelerated {M}onte {C}arlo sampling with local {PC}
  expansions.
\newblock {\em SIAM J. Sci. Comput.}, 40(4):C547--C580, 2018.

\bibitem{DS2017}
M.~Dashti and A.~M. Stuart.
\newblock The {B}ayesian approach to inverse problems.
\newblock In {\em Handbook of uncertainty quantification. {V}ol. 1, 2, 3},
  pages 311--428. Springer, Cham, 2017.

\bibitem{VFEBDDC2022}
F.~Dassi, S.~Zampini, and S.~Scacchi.
\newblock Robust and scalable adaptive {BDDC} preconditioners for virtual
  element discretizations of elliptic partial differential equations in mixed
  form.
\newblock {\em Comput. Methods Appl. Mech. Engrg.}, 391:Paper No. 114620, 20,
  2022.

\bibitem{DKPPS2018}
A.~Desai, M.~Khalil, C.~Pettit, D.~Poirel, and A.~Sarkar.
\newblock Scalable domain decomposition solvers for stochastic {PDE}s in high
  performance computing.
\newblock {\em Comput. Methods Appl. Mech. Engrg.}, 335:194--222, 2018.

\bibitem{dodwell_multilevel_2019}
T.~J. Dodwell, C.~Ketelsen, R.~Scheichl, and A.~L. Teckentrup.
\newblock Multilevel markov chain monte carlo.
\newblock {\em {SIAM} Review}, 61(3):509--545, 2019.

\bibitem{DKW2008O}
C.~Dohrmann, A.~Klawonn, and O.~Widlund.
\newblock Domain decomposition for less regular subdomains: overlapping
  {S}chwarz in two dimensions.
\newblock {\em SIAM J. Numer. Anal.}, 46(4):2153--2168, 2008.

\bibitem{DKWDD22}
C.~Dohrmann, A.~Klawonn, and O.~Widlund.
\newblock Extending theory for domain decomposition algorithms to irregular
  subdomains.
\newblock In {\em Domain decomposition methods in science and engineering
  {XVII}}, volume~60 of {\em Lect. Notes Comput. Sci. Eng.}, pages 255--261.
  Springer, Berlin, 2008.

\bibitem{DW2012}
C.~Dohrmann and O.~Widlund.
\newblock An alternative coarse space for irregular subdomains and an
  overlapping {S}chwarz algorithm for scalar elliptic problems in the plane.
\newblock {\em SIAM J. Numer. Anal.}, 50(5):2522--2537, 2012.

\bibitem{Dohrmann:2003:PSC}
C.~R. Dohrmann.
\newblock A preconditioner for substructuring based on constrained energy
  minimization.
\newblock {\em SIAM J. Sci Comput.}, 25(1):246--258, 2003.

\bibitem{StochasticFETIDP2021}
M.~Eigel and R.~Gruhlke.
\newblock A local hybrid surrogate-based finite element tearing interconnecting
  dual-primal method for nonsmooth random partial differential equations.
\newblock {\em Internat. J. Numer. Methods Engrg.}, 122(4):1001--1030, 2021.

\bibitem{GS2009}
J.~Galvis and M.~Sarkis.
\newblock Approximating infinity-dimensional stochastic {D}arcy's equations
  without uniform ellipticity.
\newblock {\em SIAM J. Numer. Anal.}, 47(5):3624--3651, 2009.

\bibitem{GSSFEM2003}
R.G. Ghanem and P.D. Spanos.
\newblock {\em Stochastic Finite Element: A Spectral Approach}.
\newblock Dover Publications, 2003.

\bibitem{GilesActa2015}
M.~B. Giles.
\newblock Multilevel {M}onte {C}arlo methods.
\newblock {\em Acta Numer.}, 24:259--328, 2015.

\bibitem{giles2018book}
M.~B. Giles.
\newblock An introduction to multilevel {M}onte {C}arlo methods.
\newblock In {\em Proceedings of the {I}nternational {C}ongress of
  {M}athematicians---{R}io de {J}aneiro 2018. {V}ol. {IV}. {I}nvited lectures},
  pages 3571--3590. World Sci. Publ., Hackensack, NJ, 2018.

\bibitem{Gittelson2010}
C.~J. Gittelson.
\newblock Stochastic {G}alerkin discretization of the log-normal isotropic
  diffusion problem.
\newblock {\em Math. Models Methods Appl. Sci.}, 20(2):237--263, 2010.

\bibitem{GKNSSS2015}
I.~G. Graham, F.~Y. Kuo, J.~A. Nichols, R.~Scheichl, Ch. Schwab, and I.~H.
  Sloan.
\newblock Quasi-{M}onte {C}arlo finite element methods for elliptic {PDE}s with
  lognormal random coefficients.
\newblock {\em Numer. Math.}, 131(2):329--368, 2015.

\bibitem{HSB2020}
M.~Hanek, J.~{S}\'{\i}stek, and P.~Burda.
\newblock Multilevel {BDDC} for incompressible {N}avier-{S}tokes equations.
\newblock {\em SIAM J. Sci. Comput.}, 42(6):C359--C383, 2020.

\bibitem{HS2014}
V.~H. Hoang and C.~Schwab.
\newblock {$N$}-term {W}iener chaos approximation rate for elliptic {PDE}s with
  lognormal {G}aussian random inputs.
\newblock {\em Math. Models Methods Appl. Sci.}, 24(4):797--826, 2014.

\bibitem{HLZ2017}
T.~Y. Hou, Q.~Li, and P.~Zhang.
\newblock Exploring the locally low dimensional structure in solving random
  elliptic {PDE}s.
\newblock {\em Multiscale Model. Simul.}, 15(2):661--695, 2017.

\bibitem{KKR2020}
A.~Klawonn, M.~K\"{u}hn, and O.~Rheinbach.
\newblock Coarse spaces for {FETI}-{DP} and {BDDC} methods for heterogeneous
  problems: connections of deflation and a generalized transformation-of-basis
  approach.
\newblock {\em Electron. Trans. Numer. Anal.}, 52:43--76, 2020.

\bibitem{KRW2008F}
A.~Klawonn, O.~Rheinbach, and O.~Widlund.
\newblock An analysis of a {FETI}-{DP} algorithm on irregular subdomains in the
  plane.
\newblock {\em SIAM J. Numer. Anal.}, 46(5):2484--2504, 2008.

\bibitem{LeMaitre}
O.P. LeMaitre and O.M. Knio.
\newblock {\em Spectral Methods for Uncertainty Quantification: with
  Applications to Computational Fluid Dynamics}.
\newblock Springer, 2010.

\bibitem{LiBDDCS}
J.~Li and O.~Widlund.
\newblock {BDDC} algorithms for incompressible {S}tokes equations.
\newblock {\em SIAM J. Numer. Anal.}, 44(6):2432--2455, 2006.

\bibitem{LiaoWillcox2015}
Q.~Liao and K.~Willcox.
\newblock A domain decomposition approach for uncertainty analysis.
\newblock {\em SIAM J. Sci. Comput.}, 37(1):A103--A133, 2015.

\bibitem{Liu2008}
J.~S. Liu.
\newblock {\em Monte {C}arlo Strategies for Scientific Computing}.
\newblock Springer, New York, 2013.

\bibitem{malliavin2015stochastic}
P.~Malliavin.
\newblock {\em Stochastic Analysis}.
\newblock Grundlehren der mathematischen Wissenschaften. Springer Berlin
  Heidelberg, 2015.

\bibitem{Jan1}
J.~Mandel and C.~Dohrmann.
\newblock Convergence of a balancing domain decomposition by constraints and
  energy minimization.
\newblock {\em Numer. Linear Algebra Appl.}, 10(7):639--659, 2003.

\bibitem{Jan2}
J.~Mandel, C.~Dohrmann, and R.~Tezaur.
\newblock An algebraic theory for primal and dual substructuring methods by
  constraints.
\newblock {\em Appl. Numer. Math.}, 54(2):167--193, 2005.

\bibitem{MandelML}
J.~Mandel, B.~Soused{\'{\i}}k, and C.~Dohrmann.
\newblock Multispace and multilevel {BDDC}.
\newblock {\em Computing}, 83(2-3):55--85, 2008.

\bibitem{MTWC}
M.~Morzfeld, X.~Tu, J.~Wilkening, and A.~Chorin.
\newblock Parameter estimation by implicit sampling.
\newblock {\em Commun. Appl. Math. Comput. Sci.}, 10:205--225, 2015.

\bibitem{MZ2018}
L.~Mu and G.~Zhang.
\newblock A domain decomposition model reduction method for linear
  convection-diffusion equations with random coefficients.
\newblock {\em SIAM J. Sci. Comput.}, 41(3):A1984--A2011, 2019.

\bibitem{MS2013}
A.~Mugler and H.-J. Starkloff.
\newblock On the convergence of the stochastic {G}alerkin method for random
  elliptic partial differential equations.
\newblock {\em ESAIM Math. Model. Numer. Anal.}, 47(5):1237--1263, 2013.

\bibitem{PE2009}
C.~E. Powell and H.~C. Elman.
\newblock Block-diagonal preconditioning for spectral stochastic finite-element
  systems.
\newblock {\em IMA J. Numer. Anal.}, 29(2):350--375, 2009.

\bibitem{PU2010}
C.~E. Powell and E.~Ullmann.
\newblock Preconditioning stochastic {G}alerkin saddle point systems.
\newblock {\em SIAM J. Matrix Anal. Appl.}, 31(5):2813--2840, 2010.

\bibitem{stochasticpRMCM2021}
J.~F. Reis, O.~P. Le~Ma\^{\i}tre, P.~M. Congedo, and P.~Mycek.
\newblock Stochastic preconditioning of domain decomposition methods for
  elliptic equations with random coefficients.
\newblock {\em Comput. Methods Appl. Mech. Engrg.}, 381:Paper No. 113845, 29,
  2021.

\bibitem{RV2010}
E.~Rosseel and S.~Vandewalle.
\newblock Iterative solvers for the stochastic finite element method.
\newblock {\em SIAM J. Sci. Comput.}, 32(1):372--397, 2010.

\bibitem{SBG2009}
A.~Sarkar, N.~Benabbou, and R.~Ghanem.
\newblock Domain decomposition of stochastic {PDE}s: theoretical formulations.
\newblock {\em Internat. J. Numer. Methods Engrg.}, 77(5):689--701, 2009.

\bibitem{SGP2014}
B.~Soused\'{\i}k, R.~G. Ghanem, and E.~T. Phipps.
\newblock Hierarchical {S}chur complement preconditioner for the stochastic
  {G}alerkin finite element methods.
\newblock {\em Numer. Linear Algebra Appl.}, 21(1):136--151, 2014.

\bibitem{SSM2013}
B.~Soused{\'\i}k, J.~{{S}}{\'\i}stek, and J.~Mandel.
\newblock Adaptive-multilevel {BDDC} and its parallel implementation.
\newblock {\em Computing}, 95(12):1087--1119, 2013.

\bibitem{STX2021}
Y.~Su, X.~Tu, and Y.~Xu.
\newblock {BDDC} algorithms for finite volume element methods.
\newblock {\em Electron. Trans. Numer. Anal.}, 58:66--83, 2023.

\bibitem{SS2013}
W.~Subber and A.~Sarkar.
\newblock Dual-primal domain decomposition method for uncertainty
  quantification.
\newblock {\em Comput. Methods Appl. Mech. Engrg.}, 266:112--124, 2013.

\bibitem{SS2014}
W.~Subber and A.~Sarkar.
\newblock A domain decomposition method of stochastic {PDE}s: an iterative
  solution techniques using a two-level scalable preconditioner.
\newblock {\em J. Comput. Phys.}, 257(part A):298--317, 2014.

\bibitem{TST2017}
R.~Tipireddy, P.~Stinis, and A.~M. Tartakovsky.
\newblock Basis adaptation and domain decomposition for steady-state partial
  differential equations with random coefficients.
\newblock {\em J. Comput. Phys.}, 351:203--215, 2017.

\bibitem{Tipireddy2018}
R.~Tipireddy, P.~Stinis, and A.~M. Tartakovsky.
\newblock Stochastic basis adaptation and spatial domain decomposition for
  partial differential equations with random coefficients.
\newblock {\em SIAM/ASA J. Uncertain. Quantif.}, 6(1):273--301, 2018.

\bibitem{Toselli:2004:DDM}
A.~Toselli and O.~Widlund.
\newblock {\em Domain {D}ecomposition {M}ethods - {A}lgorithms and {T}heory},
  volume~34 of {\em Springer Series in Computational Mathematics}.
\newblock Springer Verlag, Berlin-Heidelberg-New York, 2005.

\bibitem{Tu:2005:BPP}
X.~Tu.
\newblock A {BDDC} algorithm for a mixed formulation of flows in porous media.
\newblock {\em Electron. Trans. Numer. Anal.}, 20:164--179, 2005.

\bibitem{Tu_thesis}
X.~Tu.
\newblock {\em {BDDC} Domain Decomposition Algorithms: Methods with Three
  Levels and for Flow in Porous Media}.
\newblock PhD thesis, Courant Institute, New York University, January 2006.
\newblock TR2005-879, Department of Computer Science, Courant Institute.
  http://cs.nyu.edu/csweb/Research/TechReports/TR2005-879/TR2005-879.pdf.

\bibitem{Tu:2005:BPD}
X.~Tu.
\newblock A {BDDC} algorithm for flow in porous media with a hybrid finite
  element discretization.
\newblock {\em Electron. Trans. Numer. Anal.}, 26:146--160, 2007.

\bibitem{Tu:2005:TLB}
X.~Tu.
\newblock Three-level {BDDC} in three dimensions.
\newblock {\em SIAM J. Sci. Comput.}, 29(4):1759--1780, 2007.

\bibitem{Tu:2004:TLB}
X.~Tu.
\newblock Three-level {BDDC} in two dimensions.
\newblock {\em Internat. J. Numer. Methods Engrg.}, 69:33--59, 2007.

\bibitem{Tu:2008:TBS}
X.~Tu.
\newblock A three-level {BDDC} algorithm for saddle point problems.
\newblock {\em Numer. Math.}, 119(1):189--217, 2011.

\bibitem{TW:2016:HDG}
X.~Tu and B.~Wang.
\newblock A {BDDC} algorithm for second-order elliptic problems with
  hybridizable discontinuous {G}alerkin discretizations.
\newblock {\em Electron. Trans. Numer. Anal.}, 45:354--370, 2016.

\bibitem{TW:2017:WGS}
X.~Tu and B.~Wang.
\newblock A {BDDC} algorithm for the {S}tokes problem with weak {G}alerkin
  discretizations.
\newblock {\em Comput. Math. Appl.}, 76(2):377--392, 2018.

\bibitem{TWZstokes2020}
X.~Tu, B.~Wang, and J.~Zhang.
\newblock Analysis of {BDDC} algorithms for {S}tokes problems with hybridizable
  discontinuous {G}alerkin discretizations.
\newblock {\em Electron. Trans. Numer. Anal.}, 52:553--570, 2020.

\bibitem{TZAD2021}
X.~Tu and J.~Zhang.
\newblock B{DDC} algorithms for advection-diffusion problems with {HDG}
  discretizations.
\newblock {\em Comput. Math. Appl.}, 101:74--106, 2021.

\bibitem{TZOseen2022}
X.~Tu and J.~Zhang.
\newblock B{DDC} algorithms for {O}seen problems with {HDG} discretizations.
\newblock {\em IMA J. Numer. Anal.}, published online.

\bibitem{Ullmann2010}
E.~Ullmann.
\newblock A {K}ronecker product preconditioner for stochastic {G}alerkin finite
  element discretizations.
\newblock {\em SIAM J. Sci. Comput.}, 32(2):923--946, 2010.

\bibitem{WDD23}
O.~Widlund.
\newblock Accomodating irregular subdomains in domain decomposition theory.
\newblock In {\em Domain decomposition methods in science and engineering
  {XVIII}}, volume~70 of {\em Lect. Notes Comput. Sci. Eng.}, pages 87--98.
  Springer, Berlin, 2009.

\bibitem{Widlund:2020:BDDC}
O.~Widlund.
\newblock B{DDC} domain decomposition algorithms.
\newblock In {\em 75 years of mathematics of computation}, volume 754 of {\em
  Contemp. Math.}, pages 261--281. Amer. Math. Soc., Providence, RI, 2020.

\bibitem{PC1938}
N.~Wiener.
\newblock The {H}omogeneous {C}haos.
\newblock {\em Amer. J. Math.}, 60(4):897--936, 1938.

\bibitem{XHcollocation2005}
D.~Xiu and J.~S. Hesthaven.
\newblock High-order collocation methods for differential equations with random
  inputs.
\newblock {\em SIAM J. Sci. Comput.}, 27(3):1118--1139, 2005.

\bibitem{Zampini2016}
S.~Zampini.
\newblock P{CBDDC}: a class of robust dual-primal methods in {PETS}c.
\newblock {\em SIAM J. Sci. Comput.}, 38(5):S282--S306, 2016.

\bibitem{ZT:2016:Darcy}
S.~Zampini and X.~Tu.
\newblock Addaptive multilevel {BDDC} deluxe algorithms for flow in porous
  media.
\newblock {\em SIAM J. Sci. Comput.}, 39(4):A1389--A1415, 2017.

\bibitem{ZhangBabaeeKarniadakis2018}
D.~Zhang, H.~Babaee, and G.~E. Karniadakis.
\newblock Stochastic domain decomposition via moment minimization.
\newblock {\em SIAM J. Sci. Comput.}, 40(4):A2152--A2173, 2018.

\bibitem{ZTBrinkman2022}
J.~Zhang and X.~Tu.
\newblock Robust {BDDC} algorithms for the {B}rinkman problem with {HDG}
  discretizations.
\newblock {\em Comput. Methods Appl. Mech. Engrg.}, 400:Paper No. 115548, 26,
  2022.

\end{thebibliography}
\bibliographystyle{plain}
\end{document}